\newtheorem{thm}{Theorem}[section]
\newtheorem{lem}[thm]{Lemma}
\newtheorem{prop}[thm]{Proposition}
\newtheorem{cor}[thm]{Corollary}
\newtheorem{ex}[thm]{Example}
\newtheorem{claim}[thm]{Claim}
\newtheorem{fact}[thm]{Fact}
\newtheorem{res}[thm]{Result}
\newenvironment{poc}{\begin{proof}[Proof of claim]}{\end{proof}}
\theoremstyle{definition}
\newtheorem{defn}[thm]{Definition}
\newtheorem{rem}[thm]{Remark}
\newcommand{\F}{\mathbb{F}}
\newcommand{\R}{\mathbb{R}} 
\newcommand{\N}{\mathbb{N}}
\newcommand{\Z}{\mathbb{Z}}
\newcommand{\Zp}{\Z_p}
\newcommand{\Zps}{\Z_{p^s}}
\newcommand{\cT}{\mathcal{T}}
\newcommand{\al}{\alpha}
\newcommand{\es}{\emptyset}
\newcommand{\GR}{\text{GR}}
\newcommand{\pr}{\prime}
\newcommand{\sm}{\setminus}
\newcommand{\lc}{\lceil}
\newcommand{\rc}{\rceil}
\newcommand{\Fpn}{\F_{p^n}}
\title{Generalized additive bases and difference bases for Cartesian product of finite abelian groups}
\author{Shuxing Li}
\address{Department of Mathematical Sciences \\ University of Delaware\\ Newark, DE 19716 \\ United States}
\email{shuxingl@udel.edu}
\author{Chi Hoi Yip}
\address{School of Mathematics\\ Georgia Institute of Technology\\ Atlanta, GA 30332\\ United States}
\email{cyip30@gatech.edu}
\subjclass[2020]{Primary: 11B13, 20D60; Secondary: 05B10, 20K01, 05A17}
\keywords{additive basis, difference basis, finite abelian group, Galois ring, relative difference set}
\begin{document}

\begin{abstract}
For a finite group $G$ and positive integer $g$, a $g$-additive basis is a subset of $G$ whose pairwise sums cover each element of $G$ at least $g$ times, with $g$-difference bases defined similarly using pairwise differences. While prior work focused on $1$-additive and $1$-difference bases, recent works of Kravitz and Schmutz--Tait explored $g$-additive and $g$-difference bases in finite abelian groups. This paper investigates such bases in $G^n$, the Cartesian product of a finite abelian group $G$. We construct $g$-additive and $g$-difference bases in $G^n$, which lead to asymptotically sharp upper bounds on the minimal sizes of such bases. Our proofs draw on ideas from additive combinatorics and combinatorial design theory.
\end{abstract}

\maketitle

\section{Introduction}
Given a subset $A$ of a group $G$, a natural question is how the multiset $\{\{ a_1+a_2 \mid a_1, a_2 \in A \}\}$ of pairwise sums and the multiset $\{\{ a_1-a_2 \mid a_1, a_2 \in A \}\}$ of pairwise differences compare with the ambient group $G$. This question connects to many areas in discrete mathematics, including Sidon sets in additive combinatorics  \cite{O04} as well as difference sets in combinatorial design theory \cite[Chapter VI]{BJL99v1}.    

We recall some standard notation in order to give precise definitions for Sidon sets and difference sets. Given a group $G$ and two subsets $A$, $B$ of $G$, we define the representation functions for each $x \in G$:
$$
r_{A+B}(x)=|\{(a,b) \in A \times B \mid a+b=x\}|, \quad r_{A-B}(x)=|\{(a,b) \in A \times B \mid a-b=x\}|.
$$
A subset $A$ of $G$ is a \emph{Sidon set} in $G$ if $r_{A+A}(x) \le 2$ for each $x \in G$. Equivalently, the sums generated from $A$ is maximally distinctive: if $a_1+a_2=a_3+a_4$ for $a_1, a_2, a_3, a_4 \in A$, then we must have $\{a_1,a_2\}=\{a_3,a_4\}$. A subset $B$ of $G$ is a \emph{difference set} in $G$ if $r_{B-B}(x)$ is a constant $g \ge 1$ for each nonidentity $x \in G$. Equivalently, every nonidentity element of $G$ can be represented as differences of elements from $B$ in exactly $g$ ways.

Let $g$ be a positive integer. A subset $A$ of $G$ is a \emph{$g$-additive basis for $G$} if $r_{A+A}(x)\geq g$ for all $x\in G$. Analogously, $B$ is a \emph{$g$-difference basis for $G$} if $r_{B-B}(x)\geq g$ for all $x\in G$. Observe that a $2$-additive basis is the counterpart of a Sidon set: every group element has at least two representations as a sum of elements of a $2$-additive basis, whereas for a Sidon set, each group element has at most two such representations. Moreover, a $g$-difference basis relaxes the notion of a difference set: every non-identity group element has at least, rather than exactly, $g$ representations as a difference of elements of a $g$-difference basis. Note that what we call a $g$-difference basis is termed a $g$-difference set in \cite{K21, ST25}. Here we use the term $g$-difference basis to distinguish it from the difference set defined above, which has been intensively studied in combinatorial design theory \cite[Chapter~VI]{BJL99v1}.

Let $\nu_g(G)$ denote the minimum size of a $g$-additive basis for $G$, and let $\eta_g(G)$ denote the minimum size of a $g$-difference basis for $G$. Historically, extensive research has been conducted on $1$-additive basis and $1$-difference basis across many groups. The problem of estimating $\nu_1(G)$ for cyclic groups $G$ was first proposed by Schur, and Rohrbach \cite{R37} initiated the study of $\nu_1(G)$ for arbitrary groups in 1937. We note that in the literature, $1$-additive bases are more commonly referred to as bases; see for example \cite{BH91, KL92, R37, S55}. In this paper, we adopt the term additive basis, following \cite{GS80}, to emphasize the distinction from difference bases. The study of $1$-difference basis can be dated back to 1949, initiated by R\'{e}dei and R\'{e}nyi in the setting of expressing a set of integers $\{1,2,\ldots,n\}$ as pairwise differences of elements from a set \cite{RR49}. Since then, there has been a series of research on $1$-difference basis, including \cite{BG19c,BG19,BG19b,FKL88}. Notably, we have the following result concerning $1$-additive bases and $1$-difference bases in finite groups, which are not necessarily abelian.

\begin{res}
\label{res-general}
Let $G$ be a finite group. Then the following holds.
\begin{itemize}
\item[(1)] {\rm (Kozma and Lev, \cite[Theorem 2]{KL92})} $\nu_1(G) \le \frac{4}{\sqrt{3}}\sqrt{|G|}$.
\item[(2)] {\rm (Finkelstein, Kleitman, and Leighton, \cite[Theorem B]{FKL88})} $\eta_1(G) \le 3\sqrt{|G|}$.
\end{itemize}
\end{res}

The above results apply to all finite groups, and their proof relies on the classification of finite simple groups. When confined to specific families of finite groups, there has been a series of intensive research concerning $1$-additive basis and $1$-difference basis, including \cite{BG19c,BG19,BG19b,BH91}. Specifically, when $G$ is a finite abelian group, various upper bounds of $\nu_1(G)$ and $\eta_1(G)$ have been derived, depending on structural information of $G$. For instance, Bertram and Herzog proved that if $|G|$ is a square or $G$ is cyclic, then $\nu_1(G) < 2\sqrt{|G|}$ \cite[Lemma 8 and Proposition 13]{BH91}. 

In this paper, we study $g$-additive basis and $g$-difference basis in finite abelian groups for general positive integers $g$. Employing a simple counting argument, we have the following trivial lower bounds of $\nu_g(G)$ and $\eta_g(G)$.

\begin{lem}
\label{lem-lb}
Let $g$ be a positive integer and $G$ be a finite abelian group. We have
$$
\nu_1(G) \ge \sqrt{2|G|+\frac{1}{4}}-\frac{1}{2}, \quad \text{ and } \quad \nu_g(G) \ge \sqrt{g|G|} \quad \text{   for   } g\geq 2;
$$
$$
\eta_g(G)\geq \sqrt{g(|G|-1)+\frac{1}{4}}+\frac{1}{2} \quad \text{   for   } g\geq 1.
$$
\end{lem}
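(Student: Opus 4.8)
The plan is to prove all three inequalities by the same elementary double-counting principle: for any subset $A$ of $G$ with $k = |A|$ we have the identity $\sum_{x \in G} r_{A+A}(x) = k^2$ (and likewise $\sum_{x \in G} r_{B-B}(x) = k^2$), since each ordered pair in $A \times A$ contributes to exactly one value of the sum (resp.\ difference). The basis conditions then become lower bounds on $\sum_x r_{A+A}(x)$ or $\sum_x r_{B-B}(x)$, from which one solves for $k$.

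For the additive bounds, let $A$ be a $g$-additive basis. When $g \ge 2$, combining $r_{A+A}(x) \ge g$ for all $x$ with $\sum_x r_{A+A}(x) = k^2$ gives $k^2 \ge g|G|$, hence $\nu_g(G) \ge \sqrt{g|G|}$. For $g = 1$ this crude count only yields $k \ge \sqrt{|G|}$, which is weaker than claimed; the reason is that ordering wastes roughly a factor of two, as $(a,b)$ and $(b,a)$ produce the same sum. So here I would instead count unordered pairs: the multiset of sums $a+b$ over unordered pairs $\{a,b\} \subseteq A$ (with $a = b$ allowed) has size $\binom{k}{2} + k = \binom{k+1}{2}$, and these must cover all of $G$, so $\binom{k+1}{2} \ge |G|$. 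Solving $k^2 + k - 2|G| \ge 0$ yields $k \ge \sqrt{2|G| + \tfrac14} - \tfrac12$, which is the stated bound on $\nu_1(G)$.

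For the difference bound, let $B$ be a $g$-difference basis with $k = |B|$. One cannot symmetrize as above, but one gains from the identity element instead: in a group $a - b = 0$ if and only if $a = b$, so $r_{B-B}(0) = k$, while $r_{B-B}(x) \ge g$ for each of the remaining $|G|-1$ elements. Therefore $k^2 = \sum_{x \in G} r_{B-B}(x) \ge k + g(|G|-1)$, and solving $k^2 - k - g(|G|-1) \ge 0$ gives $\eta_g(G) \ge \sqrt{g(|G|-1) + \tfrac14} + \tfrac12$.

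There is no genuine obstacle in this argument; the only points meriting attention are (i) noticing that the stronger constant for $g = 1$ forces one to count \emph{unordered} rather than ordered pairs, whereas for $g \ge 2$ the naive ordered count already suffices, and (ii) carrying out the arithmetic of the two quadratic inequalities so that the bounds emerge in exactly the closed forms stated. (Commutativity is used only to make the unordered sum $\{a,b\} \mapsto a+b$ well defined; the difference-basis estimate in fact holds verbatim for nonabelian $G$.)
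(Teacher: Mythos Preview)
Your argument is correct and is exactly the standard double-counting proof the paper has in mind: the paper does not spell out a proof of this lemma at all, merely remarking that the bounds are well known and citing \cite{BG19, BH91, K21}. Your handling of the three cases (ordered count for $g\ge 2$, unordered count for $g=1$, and the $r_{B-B}(0)=|B|$ refinement for differences) is the expected one, and the quadratic manipulations are carried out correctly.
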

Note that in \Cref{lem-lb}, the lower bounds of $\nu_1(G)$ and $\nu_2(G)$ almost coincide. That is because in an abelian group $G$, for any $a,b \in G$, $a+b$ and $b+a$ represent the same element in $G$. We note that lower bounds in \Cref{lem-lb} are well-known; see for example \cite{BG19, BH91, K21}. 

While the trivial lower bounds in \Cref{lem-lb} are not expected to be tight in general, it is natural to ask whether they become asymptotically tight in a suitable limiting regime. To this end, we consider the Cartesian product of a fixed finite abelian group and study the asymptotic behavior of $\nu_g(G^n)$ and $\eta_g(G^n)$ as $n\to\infty$, where $g$ is a fixed positive integer and $G$ is a fixed finite abelian group. Before stating our main results, we outline two motivations---drawn from the literature in additive combinatorics---for investigating $\nu_g(G^n)$ and $\eta_g(G^n)$ for a fixed $G$.

\subsection{Motivations from $g$-Sidon sets and $g$-difference basis}\label{sec:1.1}
Sidon sets are widely studied and there are hundreds of relevant papers; we refer to an annotated bibliography by O'Bryant \cite{O04}. For a positive integer $g$, a set $A$ within a group $G$ is a $g$-Sidon set if $r_{A+A}(x) \le g$ for each $x \in G$.  Let
$$
\beta_g(n)= \max\{ |A| \mid A \subset \{1,2,\ldots,n\}, \mbox{$A$ is a $g$-Sidon set in $\Z$} \}.
$$
A natural way to study $\beta_g(n)$ is through its asymptotic behavior. For $g\in\{2,3\}$, it is well known that $\lim_{n\to\infty} \beta_g(n)/\sqrt{n}=1$. However, for $g\ge 4$, the existence of the limit $\lim_{n\to\infty} \beta_g(n)/\sqrt{n}$ remains a challenging open problem \cite[Section~1.1]{CRV10}.

In 2010, Cilleruelo, Ruzsa, and Vinuesa \cite{CRV10} studied the quantity $\beta_g(n)/\sqrt{gn}$ and proved the following two double limits coincide:
\begin{equation}
\label{eqn-CRV}
\lim_{g\rightarrow \infty} \limsup_{n \rightarrow \infty} \frac{\beta_g(n)}{\sqrt{gn}} = \lim_{g\rightarrow \infty} \liminf_{n \rightarrow \infty} \frac{\beta_g(n)}{\sqrt{gn}}=\sigma,
\end{equation}
where $\sigma=\sup_{f\in \mathcal{E}} \int_{0}^{1} f(x) \,dx$ and 
\begin{equation}\label{eq:conv}
\mathcal{E}=\bigg\{f:[0,1] \to \R_{\geq 0}: \int_{\R} f(t)f(x-t) \, dt\leq 1 \text{ for all } x\in \R\bigg\}.
\end{equation} 
The key idea to the proof of \cref{eqn-CRV} is the construction of large $g$-Sidon sets in a cyclic group $\Z_q$. To this end, consider 
$\alpha_g=\limsup_{q\rightarrow \infty} \alpha_g(q)/\sqrt{q}$, where 
$$
\alpha_g(q)=\max\{ |A| \mid \mbox{$A$ is a $g$-Sidon set in $\Z_q$} \}
$$
for each positive integer $q$. Note that a trivial upper bound for $\alpha_g(q)$ is $\alpha_g(q) \le \sqrt{gq}$. They showed that $\alpha_g=\sqrt{g}+O(g^{3/10})$ \cite[Theorem 1.6]{CRV10}, which implies 
$$
\lim_{g \rightarrow \infty} \limsup_{q \rightarrow \infty} \alpha_g(q)/\sqrt{gq}=\lim_{g \rightarrow \infty} \alpha_g/\sqrt{g}=1.
$$ 

Inspired by the fascinating connection between the autoconvolution inequality in \cref{eq:conv} and $g$-Sidon sets, Barnard and Steinerberger~\cite{BS20} asked if a dual result holds where $g$-Sidon sets are replaced by $g$-difference bases, and \cref{eq:conv} is replaced by a corresponding one involving an analogous autocorrelation inequality. Kravitz \cite{K21} answered their question in the affirmative and proved several results that are analogous to those in Cilleruelo, Ruzsa, and Vinuesa \cite{CRV10}. More precisely, let $\eta_g(n)$ be the minimum size of a $g$-difference basis for $\{1,2,\ldots, n\}$; he proved the following.

\begin{res}[{Kravitz, \cite[Theorem 1.2]{K21}}]
\label{res:n}

\begin{equation}\label{eq:tau}
\lim_{g\rightarrow \infty} \limsup_{n \rightarrow \infty} \frac{\eta_g(n)}{\sqrt{gn}} = \lim_{g\rightarrow \infty} \liminf_{n \rightarrow \infty} \frac{\eta_g(n)}{\sqrt{gn}}=\tau,
\end{equation}
where $\tau=\inf_{f\in \mathcal{F}} \int_{\R} f(x)\,dx$ and 
\begin{equation}\label{eq:corr}
\mathcal{F}=\bigg\{f:\R \to \R_{\geq 0}: \int_{\R} f(t)f(x+t) \, dt\geq 1 \text{ for all } x\in [0,1]\bigg\}.
\end{equation} 
\end{res}
As a key ingredient to establish \cref{res:n}, he proved that for each positive integer $g$,
$$
\liminf_{n \to \infty} \frac{\eta_g(\Z_n)}{\sqrt{n}}=\sqrt{g}+O(g^{3/10}).
$$
In the same paper, he also proved the following result using the probabilistic method.

\begin{res} [{Kravitz, \cite[Section 6]{K21}}] \label{res:prob}
Let $g:\N \to \N$ satisfy $1\leq g(n)\leq n$ such that $n=o(\exp(O(g(n)))$ as $n\to \infty$. Then $\eta_{g(|G|)}(G)=(1+o(1))\sqrt{g(|G|) \cdot |G|}$ as $|G|\to \infty$. The same result also holds for $\nu_{g(|G|)}(G)$. 
\end{res}

Very recently, Schmutz and Tait \cite{ST25} showed that $\lim_{n \to \infty} \eta_{g}(n)/\sqrt{n}$ exists for each positive integer $g$, providing an affirmative answer to a question by Kravitz \cite[Question 7.4]{K21}.

\subsection{Motivations for studying Cartesian products of a finite abelian group}

In additive combinatorics, a growing body of work studies problems first formulated over the integers and then, in parallel, in the broader setting of finite abelian groups.  Compared with the interval of integers $\{1,2,\ldots,n\}$, finite abelian groups provide a more structured algebraic setting. Along these lines, a particularly useful framework is the finite field model \cite{Green05, Peluse24}, in which questions about $\{1,2,\ldots,n\}$ are reformulated in $\F_q^n$, the $n$-dimensional vector space over a fixed finite field $\F_q$. Going a step further, since requiring a vector-space structure is a strong assumption, it is natural to consider the analogous problems over $G^n$, where $G$ is a fixed finite abelian group.

For example, a celebrated theorem of Roth states that if a set $A \subseteq \{1,2,\ldots, n\}$ contains no 3-term arithmetic progression (3AP), then $|A|=o(n)$ as $n \to \infty$. By varying the ambient group, one can pose the same question over $G^n$, where $G$ is a fixed finite abelian group. In particular, the breakthroughs of Croot, Lev, and Pach \cite{CLP17} and Ellenberg and Gijswijt \cite{EG17} analyze the maximum size of 3AP-free sets in $\Z_4^n$ and $\F_q^n$, respectively.

We have discussed previous works related to $g$-difference bases and $g$-additive bases over the interval $\{1,2,\ldots, n\} \subset \Z$ and cyclic groups $\Z_n$. In \cite[Section 3]{ST25}, Schmutz and Tait studied $\eta_g(\Z_p^n)$, where $p$ is an odd prime. Following the same spirit as above, we study $\eta_g(G^n)$ and $\nu_g(G^n)$ for a fixed abelian group $G$. 

\subsection{Main results}

In this paper, we focus on the asymptotic behavior of $\nu_g(G^n)$ and $\eta_g(G^n)$, where $g$ is a fixed positive integer, $G$ is a fixed finite abelian group, and $n\to \infty$. We prove several results that are of similar flavor to those described in Section~\ref{sec:1.1}. Note that in Result~\ref{res:prob}, the number $g$ depends on $|G|$ and is required to be sufficiently large in relation to $|G|$, thus Result~\ref{res:prob} does not apply in our setting where $g$ is fixed.

By Lemma~\ref{lem-lb}, $\sqrt{g}|G|^{n/2}$ is a trivial lower bound for $\eta_g(G^n)$, as well as for $\nu_g(G^n)$ with $g \ge 2$. Our first result indicates that this bound behaves well in an appropriate limiting regime which can be viewed as an analogue of Result~\ref{res:n}.

\begin{thm}\label{thm-limitg}
 Let $g \ge 1$ and let $G$ be a finite abelian group with $|G|>1$. Then 
\begin{align*}
\limsup_{n \rightarrow \infty} \frac{\eta_{g}(G^{n})}{\sqrt{g}|G|^{n/2}}=1+O\bigg(\frac{1}{\sqrt{g}}\bigg).
\end{align*}
In particular, 
$$
\lim_{g \rightarrow \infty} \limsup_{n \rightarrow \infty} \frac{\eta_g(G^{n})}{\sqrt{g}|G|^{n/2}}=\lim_{g \rightarrow \infty} \liminf_{n \rightarrow \infty}  \frac{\eta_g(G^{n})}{\sqrt{g}|G|^{n/2}}=1.
$$
The same result also holds for $\nu_g(G^n)$.
\end{thm}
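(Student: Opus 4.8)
The lower bound is immediate from \Cref{lem-lb}: $\eta_g(G^n)\ge\sqrt{g(|G|^n-1)+\tfrac14}+\tfrac12\ge\sqrt g\,|G|^{n/2}(1-o(1))$ as $n\to\infty$, so $\liminf_n\eta_g(G^n)/(\sqrt g\,|G|^{n/2})\ge1$; the same holds for $\nu_g$ when $g\ge2$, while $\nu_1$ only gives $\liminf\ge\sqrt2$, still within $1+O(1)$. Thus everything reduces to constructing, for each fixed $g$ and infinitely many $n$, a $g$-difference basis $A\subseteq G^n$ of size $|A|\le(\sqrt g+O(1))\,|G|^{n/2}$ with the $O(1)$ absolute; the additive slack $O(|G|^{n/2})$ is exactly what produces the multiplicative error $1+O(1/\sqrt g)$, and the stated double limit then follows by letting $g\to\infty$ and invoking $\liminf\ge1$.

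The core ingredient I would build first is a \emph{near-perfect difference basis} of $G^{n}$ along a subsequence of $n$: a set $B$ with $r_{B-B}(x)\ge1$ for all $x$ and $|B|=(1+o(1))\,|G|^{n/2}$ (the minimum possible up to $1+o(1)$, by \Cref{lem-lb}; in an exponent-$2$ situation the true optimum is instead $(1+o(1))\sqrt2\,|G|^{n/2}$, and one needs a basis of that size). Two mechanisms supply such $B$. When $|G|$ is odd and elementary abelian, take $n=2m$, identify $G^{2m}=\F_q^m\times\F_q^m$ with $q=|G|$, use the planar map $\phi(x)=x^2$, and set $B=\{(x,x^2):x\in\F_q^m\}\cup(\{0\}\times D)$ where $D$ is a $1$-difference basis of $\F_q^m$ of size $o(q^m)$ (any crude bound suffices): for each nonzero first coordinate $d$ the map $x\mapsto x^2-(x-d)^2$ is a bijection of $\F_q^m$, so $\{d\}\times\F_q^m$ is covered, and the fibre over $0$ is covered by $D$. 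To handle the even case and the non‑elementary‑abelian case I would instead use relative difference sets from Galois rings: the Teichm\"uller set of $\GR(p^{2},m)$ is a relative difference set in $\Z_{p^2}^m$ of size exactly $p^m=\sqrt{|\Z_{p^2}^m|}$ relative to its minimal nonzero ideal $\cong\Z_p^m$, so adjoining a $1$-difference basis of that ideal (again $o(p^m)$) gives a near-perfect difference basis of $\Z_{p^2}^m$, in particular of $\Z_4^m$. For general finite abelian $G$ one combines these across the invariant-factor decomposition, inducting on $|G|$ and patching forbidden subgroups recursively; controlling the constant so that all but one Sylow component contributes only a near-perfect $1$-basis is the delicate bookkeeping.

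Given a near-perfect difference basis, a $g$-difference basis of optimal size up to $1+O(1/\sqrt g)$ is obtained by \emph{stacking}. In the planar model put $r=\lceil\sqrt g\,\rceil$, choose distinct $c_1,\dots,c_r\in\F_q^m$, and take
\[
A=\bigcup_{i=1}^{r}\{(x,x^2+c_i):x\in\F_q^m\}\ \cup\ \big(\{0\}\times D'\big),
\]
with $D'$ an $r^2$-difference basis of $\F_q^m$. For $d\ne0$ each of the $r^2$ ordered pairs of stacked parabolas covers $\{d\}\times\F_q^m$ exactly once, so $r_{A-A}\ge r^2\ge g$ there; the fibre over $0$ is covered $\ge r^2$ times by $D'$; and $|A|\le r\,q^m+o(q^m)=(\lceil\sqrt g\,\rceil+o(1))\,|G|^{n/2}$ since $|D'|=\eta_{r^2}(\F_q^m)=O(r)\,q^{m/2}=o(q^m)$. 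Hence $\limsup_n\eta_g(G^n)/(\sqrt g\,|G|^{n/2})\le\lceil\sqrt g\,\rceil/\sqrt g=1+O(1/\sqrt g)$. The Galois-ring model is handled identically, stacking translates of the Teichm\"uller set along the minimal ideal; and when the $2$-part of $G$ is elementary abelian, the automatic evenness of $r_{B-B}(x)$ in characteristic $2$ absorbs the extra $\sqrt2$, so one makes the odd part an $\lceil g/2\rceil$-difference basis and the elementary abelian $2$-part a $1$-difference basis, whose minimum multiplicity is already $\ge2$.

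Finally I would treat $\nu_g$ by the mirror construction: replacing $x^2-(x-d)^2$ by $x^2+(s-x)^2$ shows the stacked parabolas also give $r_{A+A}\ge r^2$ off a ``bad'' fibre governed by squares, patched the same way, and the Galois-ring building blocks are equally additive relative difference sets. The hard part, I expect, is the $2$-part together with proving the near-perfect-difference-basis statement uniformly for \emph{all} finite abelian $G$ with $|G|>1$ while keeping the error $O(1/\sqrt g)$ rather than the weaker $O(g^{-1/(2k)})$ that a naive split over the $k$ prime divisors of $|G|$ would produce; this is precisely what forces one to concentrate essentially all of the multiplicity into a single, carefully chosen Sylow factor.
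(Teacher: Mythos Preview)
Your overall architecture (lower bound from \Cref{lem-lb}; build a $g$-basis of size $(\sqrt g+O(1))|G|^{n/2}$ along even $n$; bridge to odd $n$ by a product trick; then let $g\to\infty$) matches the paper. The difference is in the building block, and there the proposal has a genuine gap.

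You stack \emph{parabolas} $\{(x,x^2+c_i)\}$ in odd characteristic and Teichm\"uller sets of $\GR(p^2,m)$ otherwise, and then hope to glue Sylow pieces by putting all the multiplicity into one factor and using near-perfect $1$-difference bases for the rest. Two problems arise. First, parabolas collapse in characteristic~$2$ (since $x^2-y^2=(x-y)^2$), and the Teichm\"uller construction lives in $\Z_4^m$, not in $\Z_2^{2m}$; so for $G=\Z_2$ (more generally any $G$ whose $2$-part is elementary abelian and whose odd part is trivial) you have no construction at all that produces multiplicity $\ge g$. Second, your ``absorb the extra $\sqrt2$'' fix requires $\eta_1(\Z_2^{2an})=(1+o(1))\sqrt2\cdot 2^{an}$; only $\eta_1(\Z_2^{2an})\le 3\cdot 2^{an}$ is available, so the product bound yields $\limsup\le (3/\sqrt2)(1+O(1/\sqrt g))$, not $1+O(1/\sqrt g)$. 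More broadly, the ``near-perfect $1$-difference basis uniformly for all $G$'' step you flag as the hard part is essentially the content of \Cref{thm-weaklyadmissible}/\Cref{thm-admissible}, which the paper can only prove under admissibility hypotheses on the $2$-part; it is not known in general, so you cannot invoke it here.

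The paper sidesteps all of this by stacking \emph{lines} rather than parabolas. Writing $G=\prod_{i=1}^m \Z_{p_i^{s_i}}^{n_i}$ and $R_i=\GR(p_i^{s_i},n_i)$, one chooses $k$ Teichm\"uller elements $\alpha_{ij}$ in each $R_i$ with pairwise unit differences and takes the $k$ ``lines'' $S_j=\{((x_1,\dots,x_m),(\alpha_{1j}x_1,\dots,\alpha_{mj}x_m)):x_i\in R_i^*\}$ together with lower-order correction sets $U_I$ for the loci where some coordinates vanish (\Cref{lem-PCPextension}). Any target $(a,b)$ is hit from $S_j\times S_\ell$ for all but at most $m$ values of $j$ (those where some $b_i=\alpha_{ij}a_i$), giving multiplicity $\ge(k-m)(k-m-1)$ in one shot across \emph{all} primes, including $p=2$, and yielding $\eta_{(k-m)(k-m-1)}(G^{2n})\le k|G|^n(1+o_n(1))$ directly. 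With $k=\sqrt g+O(1)$ this is exactly the bound you want, and no near-perfect $1$-basis is ever needed. The moral: replace $x\mapsto x^2$ by $x\mapsto \alpha x$; the linear map is characteristic-blind and lets you treat the whole product ring at once, which is precisely what eliminates both the char-$2$ obstruction and the $O(g^{-1/(2k)})$ loss you were worried about.
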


Our second main result concerns the existence of the limit $\lim_{n \rightarrow \infty} \eta_g(G^n)/(\sqrt{g}|G|^{n/2})$, where $g$ is a fixed positive integer. For a wide range of finite abelian groups $G$, we are able to show that the limit exists. In order to state the results properly, we need the following definition. 

\begin{defn}
(Weakly admissible $2$-groups and admissible $2$-groups) Let $G$ be a finite abelian $2$-group such that
\begin{equation}
\label{eqn-admissible}
G=\prod_{i=1}^t \Z_{2^{2s_i}}^{u_i} \times \prod_{j=1}^w \Z_{2^{2r_j+1}}^{v_j} \times \Z_2^{v},
\end{equation}
with $t, w, v \ge 0$, $u_i, v_j \ge 1$, $1 \le s_1 < s_2 < \cdots < s_t$, and $1 \le r_1 < r_2 < \cdots < r_w$. 
We say 
\begin{enumerate}
    \item $G$ is a \emph{weakly admissible $2$-group} if
$$\sum_{1\leq i \leq t, s_i \notin \{1,2\}}  u_i  +\sum_{j=1}^{w} v_j \geq v;$$
\item $G$ is an \emph{admissible $2$-group} if $$2 \bigg\lfloor \frac{\sum_{1\leq i \leq t, s_i \notin \{1,2\}}  u_i}{2}\bigg \rfloor +\sum_{j=1}^{w} v_j \geq v.$$
\end{enumerate}
We regard the trivial group as an admissible $2$-group. A finite abelian $2$-group that is not admissible is defined to be an \emph{inadmissible $2$-group}.
\end{defn} 


By definition, an admissible $2$-group is necessarily weakly admissible. Indeed, in \cref{eqn-admissible}, if $\sum_{1\leq i \leq t, s_i \notin \{1,2\}}  u_i$ is even, then admissible and weakly admissible $2$-groups coincide. While the definition of weakly admissible and admissible $2$-groups seems to be fairly restrictive, with the help of the theory of partition \cite{EL41,F93}, we shall show in \cref{prop-admissible} that almost all finite abelian $2$-groups are admissible.

Utilizing the notion of weakly admissible $2$-groups, we have the following result.

\begin{thm}
\label{thm-weaklyadmissible}
Let $G=G_{2} \times H$, where $G_{2}$ is a weakly admissible $2$-group and $H$ has odd order.  Then $\eta_{1}(G^{2 n})=(1+o_n(1))|G|^{n}$
as $n\to \infty$. In particular, for each $g \ge 1$, both limits $\lim_{n \rightarrow \infty} \frac{\eta_{g}(G^{2 n})}{\sqrt{g}|G|^{n}}$ and $\lim _{n \rightarrow \infty} \frac{\eta_{g}(G^{2n+1})}{\sqrt{g}|G|^{n+\frac{1}{2}}}$ exist.
\end{thm}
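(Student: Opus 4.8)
The plan is to prove the quantitative statement $\eta_{1}(G^{2n})=(1+o_n(1))|G|^{n}$ first, and then deduce the existence of the two limits from it by a soft subadditivity argument. For the lower bound there is nothing to do: \Cref{lem-lb} applied to the group $G^{2n}$ gives $\eta_{1}(G^{2n})\ge \sqrt{|G|^{2n}-\tfrac{3}{4}}+\tfrac{1}{2}\ge |G|^{n}-1=(1-o_n(1))|G|^{n}$. The content is the matching upper bound, and the strategy is to exhibit inside $G^{2n}$ a relative difference set $D$ with respect to a subgroup $K$ such that $|D|=(1+o_n(1))|G|^{n}$ and $|K|\le|G|^{n}$. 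Granting this, $D$ covers every element of $G^{2n}\setminus K$ as a difference, so $D\cup B$ is a $1$-difference basis of $G^{2n}$ whenever $B$ is a $1$-difference basis of the subgroup $K$; choosing $B$ via Result~\ref{res-general}(2) gives $|B|\le 3\sqrt{|K|}+1\le 3|G|^{n/2}+1=o(|G|^{n})$, whence $\eta_{1}(G^{2n})\le|D|+|B|=(1+o_n(1))|G|^{n}$.

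Everything thus reduces to constructing such $D$ and $K$. Writing $G=G_2\times H$ and $G^{2n}=G_2^{2n}\times H^{2n}$, I would decompose the problem over the primary components, which are of the form $\Z_{q^a}^{2cn}$; since $G$ is fixed there are only boundedly many of them, so it suffices to handle each block with a multiplicative error $1+o_n(1)$. For $q$ odd, and for $q=2$ with $a$ even, the group $\Z_{q^a}^{2cn}$ is the additive group of a Galois ring of even rank, and classical planar and Gold-type functions over Galois fields and Galois rings produce inside it a relative difference set of size $(1+o_n(1))$ times the square root of its order, relative to a subgroup whose order is at most that square root. The delicate case is $q=2$, $a=1$: an elementary abelian $2$-group admits no planar function, so the $\Z_2^{v}$ factor of $G_2$ cannot be treated in isolation and must be folded into other components. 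The weak admissibility hypothesis $\sum_{i:\,s_i\notin\{1,2\}}u_i+\sum_j v_j\ge v$ is exactly what makes this possible: it says that each of the $v$ copies of $\Z_2$ can be matched with a distinct cyclic factor of odd $2$-power exponent $\Z_{2^{2r_j+1}}$ or of even $2$-power exponent $\Z_{2^{2s_i}}$ with $s_i\ge 3$, and on each such composite block the Galois ring machinery again yields a relative difference set of the optimal order. Taking the direct product of the blockwise relative difference sets (which is compatible relative to the full forbidden subgroups) and patching the resulting $K$ as above completes the upper bound. I expect the heart of the matter to be precisely this blockwise analysis: verifying that weak admissibility is \emph{sufficient}, that is, that the $\Z_2$-absorbing composite blocks genuinely support a relative difference set of size $(1+o_n(1))$ times the square root of their order (and understanding why the exponents $4$ and $16$, i.e.\ $s_i\in\{1,2\}$, cannot play the role of absorbers), together with the bookkeeping needed to guarantee that the assembled $D$ and $K$ have the claimed sizes. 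This is where the ideas from additive combinatorics and combinatorial design theory advertised in the abstract come in.

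Finally, the ``in particular'' clause follows formally from the case $g=1$ together with one elementary observation: if $B_1$ is a $1$-difference basis of $A_1$ and $B_2$ a $g$-difference basis of $A_2$, then $B_1\times B_2$ is a $g$-difference basis of $A_1\times A_2$, since for any non-identity $(x_1,x_2)$ one has $r_{(B_1\times B_2)-(B_1\times B_2)}(x_1,x_2)=r_{B_1-B_1}(x_1)\,r_{B_2-B_2}(x_2)\ge g$, using $r_{B_1-B_1}(x_1)\ge 1$ when $x_1\ne 0$ and $r_{B_1-B_1}(0)=|B_1|\ge g$ otherwise. Hence $\eta_g(G^{2n})\le \eta_1(G^{2(n-c)})\,\eta_g(G^{2c})$ for every $1\le c\le n$, and by the already proved case $g=1$ the right-hand side is $(1+o_n(1))|G|^{n-c}\,\eta_g(G^{2c})$. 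Dividing by $\sqrt{g}\,|G|^{n}$ and letting $n\to\infty$ with $c$ fixed gives $\limsup_{n}\eta_g(G^{2n})/(\sqrt{g}\,|G|^{n})\le \eta_g(G^{2c})/(\sqrt{g}\,|G|^{c})$, and taking the infimum over $c$ forces $\limsup_n\le\liminf_n$, so the limit exists. Running the same argument with the decomposition $G^{2n+1}=G^{2(n-c)}\times G^{2c+1}$ gives the existence of $\lim_{n}\eta_g(G^{2n+1})/(\sqrt{g}\,|G|^{n+1/2})$.
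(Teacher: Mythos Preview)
Your argument for the ``in particular'' clause is essentially \Cref{prop-limits} and is fine, and the lower bound is immediate. The gap is in the upper bound: the sentence ``taking the direct product of the blockwise relative difference sets (which is compatible relative to the full forbidden subgroups)'' is false. If $D_i$ is a relative difference set in $G_i$ with forbidden subgroup $N_i$, then differences from $\prod_i D_i$ represent only those $(x_1,\dots,x_k)$ with \emph{every} $x_i\in\{0\}\cup(G_i\setminus N_i)$; any tuple with some coordinate in $N_i\setminus\{0\}$ is missed, even when $(x_1,\dots,x_k)\notin\prod_i N_i$. The unrepresented set is therefore the union of slabs $\bigcup_i G_1\times\cdots\times(N_i\setminus\{0\})\times\cdots\times G_k$, which is not a subgroup and cannot be repaired by a single $1$-difference basis of $K=\prod_i N_i$. (Your side claim $|K|\le|G|^n$ also fails once some block has exponent exceeding its characteristic---for the construction of \Cref{lem-podd} one has $|N_i|=|G_i|^{1-1/(2s)}$---though this is moot given the covering failure.)

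The fix, and the paper's route, is to reverse the order of operations: patch each block \emph{first} to a genuine $1$-difference basis of size $(1+o_n(1))$ times the square root of its order, and only then take the product via \Cref{cor-directproduct}. The per-block constructions are \Cref{lem-podd} for odd primes and \Cref{lem-Z4n,lem-Z22s,cor-peven} for $2$-blocks of exponent at least $4$. For the $\Z_2$ factor the paper does not pair individual copies of $\Z_2$ with single cyclic absorbers as you sketch; instead it uses the quotient construction (\Cref{lem-quotient}) to funnel all the odd-exponent and high-even-exponent mass into a group of the shape $\Z_8^m\times\Z_2^m$, which is then handled by a bespoke construction via a twisted addition on $\GR(4,m)\times\GR(4,m)$ (\Cref{lem-Z8n2n}). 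Your appeal to ``Galois ring machinery on composite blocks'', once made precise, would unwind to these same ingredients, so the outline is recoverable---but not as written.
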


This result generalizes \cite[Theorem 2]{ST25} by Schmutz and Tait, which concerns the special case $G=\Z_p$ with $p$ being an odd prime. Note that the two limits in \cref{thm-weaklyadmissible} may not coincide, therefore, the sequence $\big(\frac{\eta_{g}(G^{n})}{\sqrt{g}|G|^{n/2}} \big)_{n \ge 1}$ is not necessarily convergent. In view of this, \cref{thm-weaklyadmissible} can be strengthened using the notion of admissible $2$-groups. 


\begin{thm}
\label{thm-admissible}
Let $G=G_{2} \times H \times H$, where $G_{2}$ is an admissible $2$-group of square order and $H$ has odd order. Then 
$
\eta_{1}(G^{n})=(1+o_n(1))|G|^{\frac{n}{2}}
$
as $n \to \infty$. In particular, for each $g \ge 1$, the limit $\lim_{n \to \infty} \frac{\eta_g(G^n)}{\sqrt{g}|G|^{n/2}}$ exists.
\end{thm}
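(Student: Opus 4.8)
The plan is as follows. The lower bound $\eta_1(G^n)\ge |G|^{n/2}$ is immediate from \Cref{lem-lb}: the estimate $\eta_1(G^n)\ge\sqrt{|G^n|-3/4}+1/2$ squares to something $\ge|G^n|$ whenever $|G^n|\ge 1$. So everything reduces to the matching upper bound $\eta_1(G^n)\le(1+o_n(1))|G|^{n/2}$. The engine here is the product construction: if $B_i$ is a $g_i$-difference basis for $K_i$ ($i=1,2$), then $r_{B_1\times B_2-B_1\times B_2}(x,y)=r_{B_1-B_1}(x)\,r_{B_2-B_2}(y)$, so $B_1\times B_2$ is a $g_1g_2$-difference basis for $K_1\times K_2$; with $g_1=g_2=1$ this gives $\eta_1(K_1\times K_2)\le\eta_1(K_1)\eta_1(K_2)$. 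Writing $G^n=G_2^n\times(H\times H)^n$ and using $|G_2^n|^{1/2}|(H\times H)^n|^{1/2}=|G|^{n/2}$, it suffices to produce a $1$-difference basis of size $(1+o_n(1))|G_2^n|^{1/2}$ in $G_2^n$ and one of size $(1+o_n(1))|(H\times H)^n|^{1/2}=(1+o_n(1))|H|^n$ in $(H\times H)^n$.

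For the odd part I would write the primary decomposition $H=\prod_i\Z_{p_i^{a_i}}$ and equip $H^n$ with the commutative ring structure $\mathcal{R}:=\prod_i\GR(p_i^{a_i},n)$, a finite product of Galois rings, so that $(H\times H)^n=\mathcal{R}\times\mathcal{R}$. Since every $p_i$ is odd, $2\in\mathcal{R}^{\times}$, and for the graph $R:=\{(x,x^2):x\in\mathcal{R}\}$ one computes $R-R=\{(a,b):b\in(a)\}$, where $(a)$ is the ideal generated by $a$; in particular $R-R$ covers every $(a,b)$ with $a\in\mathcal{R}^{\times}$. Hence the complement of $R-R$ lies in $\bigcup_i(I_i\times\mathcal{R})$, where $I_i\lhd\mathcal{R}$ is the ideal $\{a:a_i\in\mathfrak{m}_i\}$ of index $p_i^n$. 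Adjoining to $R$, for each $i$, a difference basis $S_i$ of the subgroup $I_i\times\mathcal{R}$ with $|S_i|\le 3|I_i\times\mathcal{R}|^{1/2}=3|H|^n p_i^{-n/2}$ (\Cref{res-general}(2)) yields a $1$-difference basis $B_H:=R\cup\bigcup_i S_i$ of $(H\times H)^n$ with $|B_H|\le|H|^n+3|H|^n\sum_i p_i^{-n/2}=(1+o_n(1))|H|^n$, as there are boundedly many $i$ and each $p_i\ge 3$.

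For the $2$-part I would prove, more generally, that there is $\varepsilon(m)\to 0$ such that every admissible $2$-group $G_2'$ of square order satisfies $\eta_1(G_2')\le(1+\varepsilon(|G_2'|))|G_2'|^{1/2}$, and then apply this to $G_2'=G_2^n$ (admissibility and square order are both preserved under taking $n$-th powers, since the defining inequality of \eqref{eqn-admissible} simply scales by $n$). Here lies the crux of the theorem: in characteristic $2$ there is no planar function, so the naive graph $\{(x,x^2)\}$ over a Galois ring $\GR(2^b,m)$ misses a positive proportion of the group, and an over-large elementary abelian $2$-part forces a genuine $\sqrt 2$-type loss — already visible from the fact that a nonzero involution always has even representation function, so $r_{B-B}(x)\ge 2$ for every $x$ of order $2$. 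The role of admissibility is exactly to preclude this: grouping the cyclic factors in \eqref{eqn-admissible}, one distributes the $v$ copies of $\Z_2$ among the other factors and covers each block by a near-optimal difference basis arising from a Galois-ring relative difference set in characteristic $2$ (a factor $\Z_{2^{2s}}$ with $s\in\{1,2\}$ absorbing arbitrarily many $\Z_2$'s, a factor $\Z_{2^{2r+1}}$ absorbing one, and a pair $\Z_{2^{2s}}\times\Z_{2^{2s}}$ with $s\ge 3$ absorbing two), followed by a lower-order correction as in the odd case; the admissibility inequality is precisely what makes this bookkeeping go through. Multiplying $B_{G_2}$ with $B_H$ then gives the upper bound and hence $\eta_1(G^n)=(1+o_n(1))|G|^{n/2}$. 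I expect essentially all of the technical work of the theorem to sit in this step.

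Finally, for the ``in particular'' clause, fix $g\ge 1$ and set $c_n:=\eta_g(G^n)/|G|^{n/2}$. The product construction gives $\eta_g(G^{a+b})\le\eta_g(G^a)\eta_1(G^b)$ for all $a,b\ge 1$, i.e. $c_{a+b}\le c_a\cdot(\eta_1(G^b)/|G|^{b/2})=c_a(1+o_b(1))$ by the first assertion. Hence $\limsup_n c_n=\limsup_b c_{a+b}\le c_a$ for every $a$, so $\limsup_n c_n\le\inf_a c_a\le\liminf_n c_n$, forcing $\lim_n c_n$ to exist; dividing by $\sqrt g$ yields the claimed limit.
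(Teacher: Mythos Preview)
Your treatment of the lower bound, the odd-order part, and the ``in particular'' clause is correct and matches the paper's approach (the paper handles the odd part prime-power by prime-power via \cref{lem-podd} and \cref{cor-oddgroup} rather than over a single product of Galois rings, but this is cosmetic; your limit argument is exactly \cref{prop-limits}). The gap is in the $2$-part. You claim that a factor $\Z_{2^{2s}}$ with $s\in\{1,2\}$ absorbs arbitrarily many copies of $\Z_2$, but this inverts the role these factors play: in the admissibility inequality the sum runs over $s_i\notin\{1,2\}$, so $\Z_4$ and $\Z_{16}$ are \emph{excluded} from the count --- they are neutral, not super-absorbers. Under your reading, $\Z_4\times\Z_2^v$ would be handled for every $v$, yet this group is not admissible once $v\ge 1$, and whether $\eta_1(\Z_4^n\times\Z_2^n)=(1+o_n(1))2^{3n/2}$ holds is exactly the kind of question the paper leaves open (\cref{sec:conclusion}, item (3)). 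Your proposed uniform bound $\eta_1(G_2')\le(1+\varepsilon(|G_2'|))|G_2'|^{1/2}$ over \emph{all} admissible $2$-groups of square order is also stronger than what is needed or proved; the paper works directly with $G_2^n$ for fixed $G_2$, so that every cyclic-factor multiplicity is a multiple of $n$ and the asymptotic Galois-ring lemmas apply.

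The actual mechanism for absorbing the $\Z_2$ factors is a construction for $\Z_8^n\times\Z_2^n$ (\cref{lem-Z8n2n}): one equips $\GR(4,n)^2$ with the twisted addition $(x_1,y_1)*(x_2,y_2)=(x_1+x_2,\,y_1+y_2+x_1x_2)$, under which it becomes isomorphic to $\Z_8^n\times\Z_2^n$, and the graph $\{(x,x^2):x\in\GR(4,n)\}$ then covers the unit part exactly as in the odd case. The required supply of $\Z_8$ factors is manufactured via the quotient construction (\cref{lem-quotient}) from the $\Z_{2^{2r+1}}$ factors and from pairs of $\Z_{2^{2s}}$ factors with $s\ge 3$ --- which is precisely why those are the terms on the left of the admissibility inequality --- while the $\Z_4$ and $\Z_{16}$ factors are handled on their own via \cref{lem-Z22s}.
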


\begin{rem}
\begin{itemize}
\item[(1)] In \cref{thm-weaklyadmissible} and \cref{thm-admissible}, if $H$ is the trivial group of order $1$, then \cref{thm-admissible} strengthens \cref{thm-weaklyadmissible} by imposing that $G_2$ is admissible with square order.
\item[(2)] The condition that $G_2$ is weakly admissible in \cref{thm-weaklyadmissible} and admissible in \cref{thm-admissible} cannot be dropped. Indeed, when $H$ is the trivial group of order $1$ and $G_2=\Z_2$, which is neither weakly admissible nor admissible, both \cref{thm-weaklyadmissible} and \cref{thm-admissible} fail: 
\begin{itemize}
\item[(2a)] For $G=G_2 \times H=\Z_2$, we have $\eta_1(G^{2n})=\eta_2(G^{2n}) \ge \sqrt{2}|G|^n$.
\item[(2b)] For $G=G_2 \times H \times H=\Z_2$, we have $\eta_1(G^{n})=\eta_2(G^{n}) \ge \sqrt{2}|G|^\frac{n}{2}$.
\end{itemize}
\end{itemize}
\end{rem}

Note that \cref{thm-weaklyadmissible} and \cref{thm-admissible} address only $g$-difference bases. Deriving similar results for $g$-additive bases appears more challenging. This aligns with the observation in \cite[Section~5.1]{ST25} that sums are harder to handle than differences. 
Indeed, the same approach in the proof of \cref{thm-weaklyadmissible} and \cref{thm-admissible} does not apply to $g$-additive bases since the lower bound in \cref{lem-lb} already indicates that $\nu_1(G^n)=(1+o_n(1))|G|^{\frac{n}{2}}$ does not hold any more.

\medskip

Below, we fix some notation to be used throughout and outline the organization of the paper.

\textbf{Notation.} For a positive integer $n$, let $\Z_n$ be the cyclic group of order $n$. For a positive integer $m$, we use $[m]$ to denote the set $\{1,2,\ldots,m\}$. For a subset $A$ of an additively written group $G$, we use $-A$ to denote the subset $\{-g \mid g \in A\}$. We write $p$ or $p_i$ for primes.

\textbf{Organization of the paper.}
In \cref{sec:prelim}, we prove some preliminary results. In \cref{sec:construction}, we discuss a few constructions of small $g$-difference bases and/or $g$-additive bases using Galois rings and relative difference sets. In \cref{sec:proofmain}, we provide the proof of our main results. In \cref{sec:conclusion}, we conclude by presenting three open questions.

\section{Preliminaries}\label{sec:prelim}

In this section, we describe some preliminary results. First, we mention the extremely helpful quotient and direct product constructions of additive bases and difference bases. Second, we show that almost all finite $2$-groups are admissible. 

\subsection{The quotient and direct product constructions}

\begin{lem}[Quotient construction]
\label{lem-quotient}
Let $g_1$ and $g_2$ be positive integers. Let $G$ be an abelian group and $H$ be a subgroup of $G$. 
\begin{itemize}
\item[(1)] Suppose $\{a_{1}, a_{2}, \ldots, a_{s}\}$ is a $g_1$-additive basis of $H$ and $\{b_{1}+H, b_{2}+H, \ldots, b_{t}+H\}$ is a $g_{2}$-additive basis of $G/H$. Then 
$\{a_{k}+b_{i} \mid 1 \leq k \leq s, 1 \leq i \leq t\}$
is a $g_{1} g_{2}$-additive basis of $G$. Consequently, $\nu_{g_1g_2}(G) \leq \nu_{g_{1}}(H) \nu_{g_{2}}(G / H)$.
\item[(2)] Suppose $\{a_{1}, a_{2}, \ldots, a_{s}\}$ is a $g_1$-difference basis of $H$ and $\{b_{1}+H, b_{2}+H, \ldots, b_{t}+H\}$ is a $g_{2}$-difference basis of $G/H$. Then 
$\{a_{k}+b_{i} \mid 1 \leq k \leq s, 1 \leq i \leq t\}$
is a $g_{1} g_{2}$-difference basis of $G$. Consequently, $\eta_{g_1g_2}(G) \leq \eta_{g_{1}}(H) \eta_{g_{2}}(G / H)$.
\end{itemize}
\end{lem}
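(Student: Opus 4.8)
The plan is to verify both parts by a direct counting argument on representation functions, exploiting the fact that the proposed set $C=\{a_k+b_i\}$ is laid out as a union of $H$-cosets, one inside each coset $b_i+H$. I will do part (1) in detail; part (2) is identical with all sums replaced by differences. First I would fix a target element $x\in G$ and write $x=y+z$ where $z+H$ is the coset of $G/H$ containing $x$ and $y\in H$ is determined by this choice once a coset representative is fixed; more precisely, I would count pairs. Given $(c,c')\in C\times C$ with $c=a_k+b_i$ and $c'=a_\ell+b_j$, the condition $c+c'=x$ forces $(b_i+H)+(b_j+H)=x+H$ in $G/H$, so the pair of cosets $(b_i+H,\,b_j+H)$ must be one of the $r_{B+B}(x+H)\ge g_2$ ordered pairs witnessing the $g_2$-additive basis property of $B=\{b_i+H\}$. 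Once such a coset pair is pinned down, $b_i+b_j$ differs from $x$ by an element $h_{i,j}\in H$ (namely $h_{i,j}=x-b_i-b_j$, which indeed lies in $H$), and the remaining requirement is $a_k+a_\ell=h_{i,j}$, which has at least $g_1$ solutions $(a_k,a_\ell)$ since $\{a_k\}$ is a $g_1$-additive basis of $H$.

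The key subtlety I would be careful about is multiplicity: the elements $a_k+b_i$ for distinct pairs $(k,i)$ need not be distinct, so $C$ is a priori a multiset and one must make sure we are counting representations in the set $C$ (with its actual multiplicities) rather than overcounting. I would handle this by noting that the map $(k,i)\mapsto a_k+b_i$ is injective: if $a_k+b_i=a_\ell+b_j$ then reducing mod $H$ gives $b_i+H=b_j+H$, and since $\{b_i+H\}$ are distinct cosets this forces $i=j$, whence $a_k=a_\ell$ and $k=\ell$. So $|C|=st$ and the counting above is honest. Then for each of the $\ge g_2$ valid coset pairs we get $\ge g_1$ genuine pairs in $C\times C$ summing to $x$, and these contributions are disjoint across distinct coset pairs (a pair $(c,c')$ determines its coset pair), giving $r_{C+C}(x)\ge g_1 g_2$. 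Taking $x$ arbitrary yields that $C$ is a $g_1g_2$-additive basis, and the size bound $\nu_{g_1g_2}(G)\le st=\nu_{g_1}(H)\,\nu_{g_2}(G/H)$ follows by choosing $\{a_k\}$ and $\{b_i+H\}$ to be minimal.

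For part (2), the argument is verbatim the same with $a_k+a_\ell=h$ replaced by $a_k-a_\ell=h$ and $b_i+b_j$ replaced by $b_i-b_j$: the condition $c-c'=x$ forces $(b_i+H)-(b_j+H)=x+H$, which has $\ge g_2$ solutions; each fixes $h_{i,j}=x-b_i+b_j\in H$, and $a_k-a_\ell=h_{i,j}$ has $\ge g_1$ solutions. Injectivity of $(k,i)\mapsto a_k+b_i$ is unchanged, so again $r_{C-C}(x)\ge g_1g_2$ for every $x\in G$. I do not anticipate a genuine obstacle here; the only thing that requires a moment's thought is the bookkeeping that the coset pair is recovered from the pair $(c,c')$, so that the lower-bound contributions from different coset pairs do not interfere, and this is immediate once injectivity is established. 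Everything uses only that $G$ is abelian (so that $G/H$ is a well-defined group and $b_i+b_j$ depends only on the cosets modulo $H$), which is part of the hypothesis.
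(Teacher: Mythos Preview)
Your argument is correct and follows essentially the same route as the paper: reduce modulo $H$ to get at least $g_2$ coset pairs, then for each coset pair use the $g_1$-basis of $H$ to get at least $g_1$ inner solutions, and finally invoke injectivity of $(k,i)\mapsto a_k+b_i$ (which the paper phrases as ``$(a_k+b_i,a_\ell+b_j)=(a_{k'}+b_{i'},a_{\ell'}+b_{j'})$ implies all indices coincide'') to ensure the resulting $g_1g_2$ pairs are genuinely distinct. The only cosmetic difference is that the paper writes out part~(2) and leaves part~(1) as analogous, while you do the reverse.
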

\begin{proof}
We only prove part (2) as the proof of part (1) is analogous. For an arbitrary $x \in G$, regard $x+H$ as an element of $G/H$, then there exist $g_{2}$ pairs of $b_{i}+H$ and $b_{j}+H$, such that $x+H=(b_{i}+H)-(b_{j}+H)=(b_i-b_j)+H$.  
Given one such $b_{i}+H$ and $b_{j}+H$ pair, we know that $x=b_{i}-b_{j}+y$ for some $y \in H$. Therefore, there exist $g_{1}$ pairs of $a_{k}$ and $a_{\ell}$ in $H$, such that $y=a_{k}-a_{\ell}$. Consequently, $x=(a_{k}+b_{i})-(a_{\ell}+b_{j})$. We want to show that there exist $g_{1} g_{2}$ pairs of $(a_{k}+b_{i},a_{\ell}+b_{j})$, such that $x=(a_{k}+b_{i})-(a_{\ell}+b_{j})$. For this purpose, it suffices to show that $(a_{k}+b_{i},a_{\ell}+b_{j})=(a_{k^{\pr}}+b_{i^{\pr}},a_{\ell^{\pr}}+b_{j^{\pr}})$ implies 
$(i,j)=(i^{\pr},j^{\pr})$ and $(k,\ell)=(k^{\pr},\ell^{\pr})$. This is true as $a_k+b_i=a_{k^{\pr}}+b_{i^{\pr}}$ implies $(i,k)=(i^{\pr},k^{\pr})$ and $a_{\ell}+b_j=a_{{\ell}^{\pr}}+b_{j^{\pr}}$ implies $(j,\ell)=(j^{\pr},\ell^{\pr})$. Thus, $\{a_{k}+b_{j} \mid 1 \leq k \leq s, 1 \leq j \leq t\}$ forms a $g_{1} g_{2}$-difference basis of $G$.  
Consequently, $\eta_{g_{1} g_{2}}(G) \leq \eta_{g_{1}}(H) \eta_{g_{2}}(G / H)$.
\end{proof}

As a direct consequence of \cref{lem-quotient}, we have the following corollary.

\begin{cor}[Direct product construction]
\label{cor-directproduct}
Let $g_1$ and $g_2$ be positive integers. Let $G=G_{1} \times G_{2}$ be an abelian group. 
\begin{itemize}
\item[(1)] For $i \in \{1,2\}$, let $D_i$ be a $g_i$-additive basis of $G_i$. Then $D_1 \times D_2$ is a $g_1g_2$-additive basis $G_1 \times G_2$. Consequently, $\nu_{g_{1} g_{2}}(G_{1} \times G_{2}) \le \nu_{g_{1}}(G_{1}) \nu_{g_{2}}(G_{2})$.
\item[(2)] For $i \in \{1,2\}$, let $D_i$ be a $g_i$-difference basis of $G_i$. Then $D_1 \times D_2$ is a $g_1g_2$-difference basis $G_1 \times G_2$. Consequently, $\eta_{g_{1} g_{2}}(G_{1} \times G_{2}) \le \eta_{g_{1}}(G_{1}) \eta_{g_{2}}(G_{2})$.
\end{itemize}
\end{cor}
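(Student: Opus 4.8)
The plan is to deduce both parts directly from the quotient construction, \cref{lem-quotient}, by taking the subgroup to be one of the two direct factors. Inside $G = G_1 \times G_2$, set $H = G_1 \times \{0\}$; this is a subgroup isomorphic to $G_1$, and the quotient map identifies $G/H$ with $G_2$ via $(x_1,x_2)+H \mapsto x_2$. Under these identifications, a $g_1$-additive (resp.\ $g_1$-difference) basis $D_1$ of $G_1$ corresponds to the subset $\{(a,0) : a \in D_1\}$ of $H$, while a $g_2$-additive (resp.\ $g_2$-difference) basis $D_2$ of $G_2$ gives the system of coset representatives $\{(0,b)+H : b \in D_2\}$, which is a $g_2$-additive (resp.\ $g_2$-difference) basis of $G/H$. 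The only point worth a word here is that the cosets of $H$ may be represented by the elements of $\{0\}\times G_2$, so that a basis of $G/H$ pulls back in exactly the form needed by \cref{lem-quotient} rather than in some twisted form.

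Applying \cref{lem-quotient}(1) for part (1) and \cref{lem-quotient}(2) for part (2), the basis of $G$ produced by the lemma is
$$
\{(a,0)+(0,b) : a \in D_1,\ b \in D_2\} = \{(a,b) : a \in D_1,\ b \in D_2\} = D_1 \times D_2,
$$
which is therefore a $g_1 g_2$-additive (resp.\ $g_1 g_2$-difference) basis of $G_1 \times G_2$. Taking $D_1$ and $D_2$ to be bases of minimum size then gives $\nu_{g_1 g_2}(G_1 \times G_2) \le \nu_{g_1}(G_1)\,\nu_{g_2}(G_2)$ and $\eta_{g_1 g_2}(G_1 \times G_2) \le \eta_{g_1}(G_1)\,\eta_{g_2}(G_2)$.

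Alternatively, one can argue directly, bypassing \cref{lem-quotient}, via the multiplicativity of representation functions over direct products: for $x=(x_1,x_2) \in G_1 \times G_2$, a pair $\big((a_1,a_2),(b_1,b_2)\big)$ of elements of $D_1 \times D_2$ with $(a_1,a_2)\pm(b_1,b_2)=x$ is exactly a pair $(a_1,b_1) \in D_1 \times D_1$ with $a_1 \pm b_1 = x_1$ together with a pair $(a_2,b_2) \in D_2 \times D_2$ with $a_2 \pm b_2 = x_2$, where $\pm$ is uniformly $+$ in the additive setting and $-$ in the difference setting. Hence $r_{(D_1\times D_2)\pm(D_1\times D_2)}(x) = r_{D_1\pm D_1}(x_1)\cdot r_{D_2\pm D_2}(x_2) \ge g_1 g_2$, as desired.

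Since this is an immediate corollary, there is no genuine obstacle: the sole place warranting care is the identification $G/H \cong G_2$ together with the choice of coset representatives, which is exactly what makes \cref{lem-quotient} output the clean product set $D_1 \times D_2$.
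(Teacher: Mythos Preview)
Your proof is correct and follows exactly the approach indicated in the paper, which simply states that the corollary is a direct consequence of \cref{lem-quotient}. Your explicit identification $H = G_1 \times \{0\}$ with $G/H \cong G_2$ spells out precisely the specialization the paper has in mind, and the alternative direct argument via multiplicativity of representation functions is also fine (and indeed just unwinds the proof of \cref{lem-quotient} in this special case).
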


Next, we use \cref{cor-directproduct} to prove the following result, which plays a fundamental role in the study of the sequence $\big( \frac{\eta_{g}(G^{n})}{\sqrt{g}|G|^{n/2}} \big)_{n \ge 1}$. Indeed, the convergence of the sequence $\big(\frac{\eta_g(G^n)}{\sqrt{g}|G|^{n/2}}\big)_{n \ge 1}$ is intriguingly related to the asymptotic optimality of the sequence $\big(\frac{\eta_1(G^n)}{|G|^{n/2}}\big)_{n \ge 1}$.

\begin{prop}
\label{prop-limits}
Let $G$ be a finite abelian group. 
\begin{enumerate}
    \item Assume that $\eta_1(G^n)=(1+o_{n}(1))|G|^{n/2}$ as $n \to \infty$. Then for each $g\geq 1$, the limit $\lim_{n \to \infty} \frac{\eta_g(G^n)}{\sqrt{g}|G|^{n/2}}$ exists.
    \item Assume that $\eta_1(G^{2n})=(1+o_{n}(1))|G|^{n}$ as $n \to \infty$. Then for each $g\geq 1$, both limits $\lim_{n \to \infty} \frac{\eta_g(G^{2n})}{\sqrt{g}|G|^{n}}$ and $\lim_{n \to \infty} \frac{\eta_g(G^{2n+1})}{\sqrt{g}|G|^{n+1/2}}$ exist.
\end{enumerate}
\end{prop}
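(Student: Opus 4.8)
The plan is a subadditivity argument, in the spirit of Fekete's lemma, built on the direct product construction \cref{cor-directproduct}. The elementary fact underpinning everything is: if $(x_N)_{N\ge1}$ is a sequence of positive reals and $(y_n)_{n\ge1}$ satisfies $y_n\to1$, and $x_{m+n}\le x_m y_n$ for all $m,n\ge1$, then for fixed $m$ we get $\limsup_{N\to\infty}x_N=\limsup_{n\to\infty}x_{m+n}\le x_m\lim_{n\to\infty}y_n=x_m$ (the first equality because $(x_{m+n})_{n\ge1}$ is a tail of $(x_N)_{N\ge1}$), and since $m$ is arbitrary this forces $\limsup_{N\to\infty}x_N\le\inf_{m\ge1}x_m\le\liminf_{N\to\infty}x_N\le\limsup_{N\to\infty}x_N$; hence $\lim_{N\to\infty}x_N$ exists and equals $\inf_{m}x_m$. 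It then suffices to produce such data $(x_N),(y_n)$ in each case.

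For part (1), put $x_N:=\eta_g(G^N)/(\sqrt{g}\,|G|^{N/2})$. By \cref{cor-directproduct}(2) applied to $G^{m+n}=G^m\times G^n$ with $g_1=g$ and $g_2=1$ we have $\eta_g(G^{m+n})\le\eta_g(G^m)\,\eta_1(G^n)$, and dividing by $\sqrt{g}\,|G|^{(m+n)/2}$ this reads $x_{m+n}\le x_m y_n$ with $y_n:=\eta_1(G^n)/|G|^{n/2}$; the hypothesis is exactly the statement $y_n\to1$. Thus $\lim_{N\to\infty}\eta_g(G^N)/(\sqrt{g}\,|G|^{N/2})$ exists. (Incidentally, \cref{lem-lb} gives $x_m\ge\sqrt{1-|G|^{-m}}$, so the limit is at least $1$; identifying its exact value would require a matching construction and is not needed for the statement.)

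For part (2) the hypothesis controls $\eta_1$ only along even exponents, so the decompositions must respect parity. For the even subsequence, set $x_N:=\eta_g(G^{2N})/(\sqrt{g}\,|G|^{N})$ and apply \cref{cor-directproduct}(2) to $G^{2(m+n)}=G^{2m}\times G^{2n}$ (again with $g_1=g$, $g_2=1$) to get $x_{m+n}\le x_m y_n$ with $y_n:=\eta_1(G^{2n})/|G|^{n}$, which tends to $1$ by hypothesis. For the odd subsequence, set $x_N:=\eta_g(G^{2N+1})/(\sqrt{g}\,|G|^{N+1/2})$ and apply \cref{cor-directproduct}(2) to $G^{2(m+n)+1}=G^{2m+1}\times G^{2n}$ to get $x_{m+n}\le x_m y_n$ with the same $y_n=\eta_1(G^{2n})/|G|^{n}$. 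In both cases the elementary fact above yields existence of the limit.

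The argument is short and I do not foresee a serious obstacle; the only thing requiring care is the decomposition bookkeeping. Every factor fed to $\eta_1$ must carry multiplicity $1$, so each split must isolate precisely one $g$-labeled block; and in part (2) every factor handed to the hypothesis must have an even exponent, which is why in the odd case one pairs a fixed odd block $G^{2m+1}$ with a growing even block $G^{2n}$ rather than merely splitting off a single copy of $G$ --- the latter would only bound $\limsup_N x_N$ by the fixed number $\eta_g(G)/(\sqrt{g}\,|G|^{1/2})$, which need not equal $\inf_m x_m$ and hence would not pin the limit down. A minor technical caveat: $\eta_g(G^0)$ is undefined for $g\ge2$, so indices run over $m,n\ge1$, and we assume $|G|>1$, the regime of interest; neither point affects anything.
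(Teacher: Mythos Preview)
Your argument is correct and is essentially the same as the paper's: both use \cref{cor-directproduct}(2) to get $\eta_g(G^{m+n})\le\eta_g(G^m)\eta_1(G^n)$, normalize, and conclude $\limsup\le\liminf$ via a Fekete-type step using the hypothesis $\eta_1(G^n)/|G|^{n/2}\to1$. The only cosmetic difference is that the paper restricts the fixed index to $k$ with $|G|^k\ge g$ (so that $\eta_g(G^k)$ is finite), whereas your caveat mentions only $m=0$; but this does not affect the argument, since the bound $\limsup_N x_N\le x_m$ is vacuous when $x_m=+\infty$.
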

\begin{proof}
(1) Let $k$ be a fixed integer such that $|G|^k\geq g$. Then $G^k$ is a $g$-difference basis for the group $G^k$, so $\eta_g(G^k)\leq |G|^k$. For each positive integer $n\geq k$, by Corollary~\ref{cor-directproduct}(2), we have
$\eta_g(G^{n})\leq \eta_1(G^{n-k}) \eta_g(G^k).$ Since $\eta_1(G^n)=(1+o_{n}(1))|G|^{n/2}$ as $n \to \infty$,
\begin{equation}\label{eq-limsup}
\limsup_{n \rightarrow \infty} \frac{\eta_{g}(G^{n})}{\sqrt{g}|G|^{n/2}} \leq \frac{\eta_{g}(G^{k})}{\sqrt{g}|G|^{k/2}} \cdot \limsup_{n \rightarrow \infty} \frac{\eta_{1}(G^{n-k})}{|G|^{(n-k)/2}}=\frac{\eta_{g}(G^{k})}{\sqrt{g}|G|^{k/2}}.
\end{equation}
Note that \cref{eq-limsup} holds for each integer $k$ with $|G|^k\geq g$. Thus, we have
$$
\limsup_{n \rightarrow \infty} \frac{\eta_{g}(G^{n})}{\sqrt{g}|G|^{n/2}} \leq \liminf_{k \rightarrow \infty} \frac{\eta_{g}(G^{k})}{\sqrt{g}|G|^{k/2}}.
$$
This shows the limit $\lim_{n \to \infty} \frac{\eta_g(G^n)}{\sqrt{g}|G|^{n/2}}$ exists.

The proof of part (2) is very similar and hence omitted.
\end{proof}

\subsection{Almost all finite $2$-groups are admissible}

In the following, we proceed to show that almost all finite abelian 2-groups are admissible, and thus, weakly admissible. We first introduce some notations. Let $\mathcal{G}_n$ be the set of all nonisomorphic abelian groups of order $2^n$. Let
$$
\mathcal{A}_n=\{ G \in \mathcal{G}_n \mid \text{$G$ is admissible} \}
$$
be the set of all nonisomorphic admissible groups of order $2^n$. We shall use tools from the theory of partitions. A \emph{partition} of a positive integer $n$ is a way of writing $n$ as a sum of positive integers, and two sums that differ only in the order of their summands are considered as the same partition. Let $p(n)$ be the number of partitions of the integer $n$. 

As a preparation, we have the following observation.
\begin{lem}
\label{lem-pn}
There is a one-to-one correspondence between groups in $\mathcal{G}_n$ and partitions of $n$. Therefore, $|\mathcal{G}_n|=p(n)$. 
\end{lem}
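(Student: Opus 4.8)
The statement to prove is Lemma~\ref{lem-pn}: there is a one-to-one correspondence between groups in $\mathcal{G}_n$ (nonisomorphic abelian groups of order $2^n$) and partitions of $n$, hence $|\mathcal{G}_n| = p(n)$.

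The plan is to invoke the fundamental theorem of finite abelian groups, which states that every finite abelian group decomposes uniquely (up to the order of factors) as a direct product of cyclic groups of prime-power order. First I would specialize this to $2$-groups: an abelian group $G$ of order $2^n$ is isomorphic to $\prod_{i=1}^k \Z_{2^{\lambda_i}}$ for some positive integers $\lambda_1 \ge \lambda_2 \ge \cdots \ge \lambda_k \ge 1$, and since the order is the product of the orders of the factors, we have $2^{\lambda_1 + \cdots + \lambda_k} = 2^n$, i.e.\ $\lambda_1 + \cdots + \lambda_k = n$. Thus each such $G$ gives rise to a partition $\lambda = (\lambda_1, \ldots, \lambda_k)$ of $n$.

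Next I would construct the map in both directions and check it is a bijection. The map $\Phi \colon \mathcal{G}_n \to \{\text{partitions of } n\}$ sends the isomorphism class of $G$ to the partition $(\lambda_1, \ldots, \lambda_k)$ read off from its invariant-factor (or primary) decomposition. This is well-defined because the uniqueness part of the structure theorem guarantees the multiset of exponents $\{\lambda_i\}$ depends only on the isomorphism class of $G$, not on the chosen decomposition. Conversely, a partition $n = \mu_1 + \cdots + \mu_\ell$ yields the group $\prod_{j=1}^\ell \Z_{2^{\mu_j}}$, which has order $2^n$; this defines $\Psi$ going the other way. That $\Phi \circ \Psi$ and $\Psi \circ \Phi$ are identities is immediate: composing recovers the same multiset of exponents, and two abelian $2$-groups with the same multiset of cyclic-factor exponents are isomorphic (again by the structure theorem). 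Since $\Phi$ is a bijection, $|\mathcal{G}_n|$ equals the number of partitions of $n$, which is $p(n)$ by definition.

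I do not anticipate a genuine obstacle here; this is a direct application of the classification of finite abelian groups, and the only point requiring care is to state clearly that the correspondence is between \emph{isomorphism classes} of groups and partitions (so that both well-definedness and injectivity of $\Phi$ rest on the uniqueness clause of the structure theorem). If a fully self-contained treatment were desired, one could additionally remark that the correspondence is the restriction to the prime $2$ of the more general fact that abelian groups of order $\prod p_i^{n_i}$ correspond to tuples of partitions $(\lambda^{(i)})_i$ with $\lambda^{(i)} \vdash n_i$, but for the present purpose the single-prime case suffices.
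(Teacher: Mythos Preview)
Your proposal is correct and follows essentially the same approach as the paper: both invoke the fundamental theorem of finite abelian groups to identify isomorphism classes of abelian $2$-groups of order $2^n$ with partitions of $n$ via the multiset of exponents in the cyclic decomposition. Your write-up is in fact slightly more careful than the paper's, explicitly checking well-definedness and that the two maps are mutual inverses.
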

\begin{proof}
Let $G$ be a finite abelian $2$-group with order $2^{n}$. By the fundamental theorem of finite abelian groups, there is a one-to-one correspondence between the invariant factors of $G$ and the partitions of $n$. Indeed, each partition $\sum_{i=1}^{t}ia_{i}$ of the integer $n$, where $i$ occurs $a_i \ge 0$ times, corresponds to a $2$-group $G=\prod_{i=1}^{t} \Z_{2^i}^{a_i}$. Therefore, $|\mathcal{G}_n|=p(n)$. 
\end{proof}

The following proposition justifies that almost all finite abelian 2-groups are admissible.

\begin{prop}
\label{prop-admissible}
Almost all finite abelian 2-groups are admissible. More precisely,
$$
\lim_{n \to \infty} \frac{\sum_{i=1}^n |\mathcal{A}_i|}{\sum_{i=1}^n |\mathcal{G}_i|}=1.
$$
\end{prop}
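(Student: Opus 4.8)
The plan is to show that the non-admissible $2$-groups of order $2^n$ form a vanishingly small fraction, i.e. that $\sum_{i=1}^n(|\mathcal G_i|-|\mathcal A_i|) = o\bigl(\sum_{i=1}^n|\mathcal G_i|\bigr)$, by exhibiting an explicit injection from inadmissible groups to a "small" set of partitions. Via \cref{lem-pn} a group $G\in\mathcal G_n$ is the partition of $n$ in which the part $i$ occurs $a_i$ times, and $G$ decomposes as in \cref{eqn-admissible} with $v=a_1$ (the multiplicity of $\Z_2$), $\sum_{j}v_j = \sum_{k\ge 1}a_{2k+1}$ (total multiplicity of odd-exponent cyclic factors of exponent $\ge 3$), and $\sum_{i:\,s_i\notin\{1,2\}}u_i = \sum_{k\ge 3}a_{2k}$ (total multiplicity of even-exponent cyclic factors other than $\Z_4,\Z_{16}$). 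So $G$ is inadmissible exactly when
$$
a_1 > 2\Bigl\lfloor\tfrac12\textstyle\sum_{k\ge 3}a_{2k}\Bigr\rfloor + \sum_{k\ge 1}a_{2k+1},
$$
and in particular this forces $a_1 \ge \sum_{k\ge 1}a_{2k+1}+1$ and $a_1 \ge \sum_{k\ge 3}a_{2k}$ (up to the floor correction, $a_1$ at least one more than half-ish of these). The key structural consequence: an inadmissible partition has \emph{many} $1$'s relative to everything else except possibly the parts equal to $2$ and $16$.

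The first step is to make this quantitative. From the inadmissibility inequality one gets $a_1 \ge \bigl(\sum_{k\ge 3}a_{2k} + \sum_{k\ge 1}a_{2k+1}\bigr)/2 \,$, roughly; more usefully, the part of $n$ \emph{not} accounted for by $1$'s, $4$'s, and $16$'s, call it $m := n - a_1 - 4a_4' - 16a_{16}'$ where $a_4',a_{16}'$ denote the multiplicities of parts $4$ and $16$ — actually cleaner to argue: the number of parts $\ge 3$ and $\ne 4,16$ is at most $a_1$, and each contributes at least $3$ to the sum, while all parts equal to $3$ alone already push $a_1$ up. The cleanest route I would take is: define $S$ to be the sum of all parts that are $\ge 3$ and not equal to $16$; each such part is $\ge 3$ so the number of them is $\le S/3$, and inadmissibility gives $a_1 \gtrsim$ (that count), hence $a_1 \ge c\sqrt{n}$ is \emph{not} what we want — rather we want to bound the count of such partitions.

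So the second step — the heart of the argument — is the counting estimate. Write an inadmissible partition of $n$ as (a multiset $T$ of parts each $\ge 3$, $\ne 16$) $\sqcup$ (some $4$'s) $\sqcup$ (some $16$'s) $\sqcup$ (some $1$'s), with the constraint $|T| \le a_1 + (\text{correction})$. The sub-multiset $T$ is a partition of some $m\le n$ into parts $\ge 3$; the number of $4$'s and $16$'s and $1$'s is then determined by at most $m$ and $n$. The point is that the map $(\text{inadmissible partition of }n)\mapsto T$ has fibers of size at most $n^{O(1)}$ (choose how many $1$'s, $4$'s, $16$'s — polynomially many choices), and $T$ ranges over partitions of integers $\le n$ into parts $\ge 3$, a set whose cumulative count $\sum_{m\le n} p_{\ge 3}(m)$ grows like $\exp\bigl(c'\sqrt{n}\bigr)$ with $c' = \pi\sqrt{2/9}$ strictly less than the Hardy–Ramanujan constant $c=\pi\sqrt{2/3}$ for $p(n)$ (restricting to parts $\ge 3$ strictly shrinks the exponential rate — this is the standard generating-function/Meinardus-type asymptotic, or just the crude Erdős–Lehner-style bound $p_{\ge 3}(m) \le p(m) e^{-\Omega(\sqrt m)}$, which suffices). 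Hence $\sum_{i\le n}(|\mathcal G_i|-|\mathcal A_i|) \le n^{O(1)}\sum_{m\le n}p_{\ge3}(m) = \exp\bigl((c'+o(1))\sqrt n\bigr)$, whereas $\sum_{i\le n}|\mathcal G_i| = \sum_{i\le n}p(i) \ge p(n) = \exp\bigl((c+o(1))\sqrt n\bigr)$ with $c>c'$, and the ratio tends to $0$. I would cite \cite{EL41,F93} for the partition asymptotics.

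The main obstacle is controlling the fiber size of the map to $T$ while keeping the constraint honest: I need that in every inadmissible partition the total number of parts that are $\ge 3$ and $\ne 16$ really is bounded by something of size $a_1 + O(1)$, so that after removing the $1$'s, $4$'s, and $16$'s the \emph{remaining} partition $T$ has few parts and hence small sum $m$ — but $m$ could still be as large as $\Theta(n)$ if there is one gigantic part. Here is the fix: if $T$ contains a part larger than, say, $2\sqrt n$, then $|T|$ must still be $\ge$ (something), but more directly, a single huge part $\ell$ forces $m\ge\ell$ and then the partition of $n$ uses up $\ell$ on one part; the number of inadmissible partitions with $m \ge \sqrt n$ is at most (partitions of some $m'\le n$ into parts $\ge 3$ with few parts) times polynomial, and one checks that restricting to \emph{few parts all $\ge 3$} still keeps the cumulative count at $\exp(O(\sqrt n))$ with the same sub-$c$ constant — indeed partitions with $o(\sqrt n)$ parts are exponentially rarer than $p(n)$, again by Erdős–Lehner. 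So the real work is a careful case split: either $T$ is small ($m \le \sqrt n$, counted by $p_{\ge 3}(\le\sqrt n)$ which is tiny) or $T$ has few parts (at most $a_1+O(1)$ of them, and if $m>\sqrt n$ then... one has to be a bit careful, but partitions of $n$ with at most $O(\sqrt n / \log n)$ parts are $\exp(o(\sqrt n))$). Either way the count is $\exp(o(\sqrt n)) \cdot n^{O(1)}$, dominated by $p(n)$, and the proposition follows. I would present this as: inadmissibility $\Rightarrow$ the number of parts $\notin\{1,4,16\}$ and $\ge 3$ is $\le a_1 + 1 \le n/\,(\text{that count})\cdot\ldots$; then invoke the two standard partition estimates to close.
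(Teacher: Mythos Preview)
Your central counting step is wrong. You claim that $\sum_{m\le n} p_{\ge 3}(m)$ grows like $\exp\bigl((c'+o(1))\sqrt{n}\bigr)$ with $c'=\pi\sqrt{2/9}$ strictly smaller than the Hardy--Ramanujan constant $c=\pi\sqrt{2/3}$. This is false: forbidding finitely many part sizes only affects the polynomial prefactor, never the exponential rate. Indeed $\prod_{k\ge 3}(1-q^k)^{-1}=(1-q)(1-q^2)\prod_{k\ge 1}(1-q^k)^{-1}$, so $p_{\ge 3}(m)$ is a finite linear combination of $p(m),p(m-1),p(m-2),p(m-3)$ and still satisfies $p_{\ge 3}(m)=\exp\bigl((c+o(1))\sqrt{m}\bigr)$. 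Consequently your bound $\sum_{i\le n}(|\mathcal G_i|-|\mathcal A_i|)\le n^{O(1)}\sum_{m\le n}p_{\ge 3}(m)$ is of order $n^{O(1)}p(n)$, which is \emph{larger} than $\sum_{i\le n}p(i)\asymp \sqrt{n}\,p(n)$, and the argument collapses. (There is also a persistent notational slip: the ``special'' even parts are $2$ and $4$ in the partition, corresponding to $\Z_4$ and $\Z_{16}$; you repeatedly write ``parts $4$ and $16$''.)

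Your structural observation is correct and is in fact the whole point: inadmissibility forces $|T|\le a_1+1$, equivalently $k(\lambda)\le 2a_1+a_2+a_4+1$ where $a_j$ is the multiplicity of the part $j$. The paper exploits exactly this inequality, but then finishes probabilistically rather than by an injection: Erd\H{o}s--Lehner gives $k(\lambda)\ge \frac{\sqrt{n}\log n}{2\pi\sqrt{2/3}}$ for $(1-o(1))p(n)$ partitions of $n$, while Fristedt gives $a_j\le \sqrt{n}\log\log n$ for $(1-o(1))p(n)$ partitions, for each fixed $j$. These two facts immediately contradict $k(\lambda)\le 2a_1+a_2+a_4+1$ for all but $o(p(n))$ partitions, so $|\mathcal A_n|=(1-o(1))p(n)$ and the Ces\`aro limit follows. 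Your closing paragraph gropes toward this (``partitions with $o(\sqrt n)$ parts are exponentially rarer''), but as written it is not a proof; you never actually use the constraint $|T|\le a_1+1$ in the count, and the case split you sketch does not close. The fix is to abandon the $T$-decomposition and argue directly with $k(\lambda)$ versus $a_1,a_2,a_4$ as above.
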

\begin{proof}
Each $G \in \mathcal{G}_n$ can be written as
$$
G=\prod_{i=1}^t \Z_{2^{2s_i}}^{u_i} \times \Z_{2^4}^{\ell_2} \times \Z_{2^2}^{\ell_1} \times \prod_{j=1}^w \Z_{2^{2r_j+1}}^{v_j} \times  \Z_2^{v},
$$
where $t, w, \ell_1, \ell_2, v \ge 0$, $u_i, v_j \ge 1$, $3 \le s_1 < s_2 < \cdots < s_t$, $1 \le r_1 < r_2 < \cdots < r_w$, and $n=\sum_{i=1}^t 2s_iu_i+4 \ell_2+2\ell_1+\sum_{j=1}^w(2r_j+1)v_j+v$. Moreover, $G \in \mathcal{A}_n$ if and only if $2M  +\sum_{j=1}^w v_j \ge v$, where $M=\lfloor \frac{\sum_{i=1}^t u_i}{2} \rfloor$. Equivalently, $G \in \mathcal{G}_n \sm \mathcal{A}_n$ if and only if $2M  +\sum_{j=1}^w v_j < v$. Therefore, $G \in \mathcal{G}_n \sm \mathcal{A}_n$ implies $k(G) \le 2M+1+\sum_{j=1}^w v_j +\ell_1+\ell_2+v \le 2v+\ell_1+\ell_2$, where $k(G)$ is the number of invariant factors of $G$, which is also the number of parts in the corresponding partition of $n$. By the correspondence in \Cref{lem-pn}, $|\mathcal{G}_n \sm \mathcal{A}_n|$ is at most the number of partitions $\lambda$ of $n$ such that $$k(\lambda)\leq 2X_{0}(\lambda)+X_{1}(\lambda)+X_2(\lambda),$$ where $k(\lambda)$ is the number of parts in $\lambda$, and for each $j\geq 0$, $X_j(\lambda)$ is the number of parts in $\lambda$ that equal to $j$. By Erd\"os and Lehner \cite[Theorem 1.1]{EL41}, $$k(\lambda)\geq \frac{\sqrt{n}\log n}{2\pi\sqrt{\frac{2}{3}}}$$
holds for $(1-o_n(1))p(n)$ partitions $\lambda$ of $n$, as $n \to \infty$. On the other hand, for each fixed $j\geq 0$, by Fristedt \cite[Theorem 2.1]{F93}, $X_j(\lambda)\leq \sqrt{n}\log \log n$ holds for $(1-o_n(1))p(n)$ partitions $\lambda$ of $n$, as $n \to \infty$. Thus, the number of partitions $\lambda$ of $n$ such that $k(\lambda)\leq 2X_{0}(\lambda)+X_{1}(\lambda)+X_2(\lambda)$ is $o(p(n))$, as $n\to \infty$. Equivalently, $|\mathcal{A}_n|=(1-o_n(1))|\mathcal{G}_n|$, as $n\to \infty$. Since $|\mathcal{G}_n|=p(n)\to \infty$ as $n\to \infty$, it follows that
\[
\lim_{n \to \infty} \frac{\sum_{i=1}^n |\mathcal{A}_i|}{\sum_{i=1}^n |\mathcal{G}_i|}=\lim_{n \to \infty} \frac{|\mathcal{A}_n|}{|\mathcal{G}_n|}=1.\qedhere
\]
\end{proof}

\section{Constructions based on Galois rings and relative difference sets}\label{sec:construction}

In this section, we present several constructions of $g$-additive bases and $g$-difference bases in finite abelian groups, which are based on Galois rings and relative difference sets. Let $A$ and $B$ be two subsets of group $G$. For simplicity, we say $A$ is a $g$-additive basis (resp. $g$-difference basis) of $B$ if $r_{A+A}(x) \ge g$ (resp. $r_{A-A}(x) \ge g$) for each $x \in B$. 

\subsection{Basics on Galois rings and relative difference sets}

We first recall some basics about Galois rings.

\begin{defn}[Galois ring]
Let $p$ be a prime and $s$, $n$ be positive integers. Let $f \in \Z_{p^s}[x]$ be a monic polynomial of degree $n$ such that the image of $f$ under the natural projection $\Z_{p^s}[x] \rightarrow \Zp[x]$ is irreducible over $\Zp$. Then the quotient ring $\Z_{p^s}[x]/(f(x))$ is called a \emph{Galois ring} of characteristic $p^s$ and rank $n$, denoted by $\GR(p^s,n)$.
\end{defn}

Galois rings are natural extensions of finite fields. Indeed, the Galois ring $\GR(p,n)$ is exactly the finite field $\Fpn$. The following fact follows from \cite[Theorem 14.8]{Wan}.

\begin{fact}
\label{fact-GR}
Let $p$ be a prime and $s$, $n$ be positive integers. Let $R$ be the Galois ring $\GR(p^s,n)$ deduced by degree $n$ monic polynomial $f(x) \in \Zps[x]$. 
For $0 \le i \le s$, let $p^iR$ be the principal ideal $(p^i)$ of $R$. The following holds true.
\begin{itemize}
\item[(1)] The additive group $(R,+)$ is isomorphic to $\Z_{p^s}^n$.
\item[(2)] There exists an element $\xi$ of $R$ with multiplicative order $p^n-1$, which is a root of $f(x)$. The set $\cT=\{0\} \cup \{ \xi^i \mid 0 \le i \le p^n-2\}$ is the \emph{Teichmuller system} of $R$.
\item[(3)] Each element of $R$ has a unique $p$-adic representation: 
$$
R=\{ a_0+a_1p+\cdots +a_{s-1}p^{s-1} \mid a_0, a_1,\ldots, a_{s-1} \in \cT \}.
$$  
Consequently, $\{a+pR \mid a \in \cT \}$ are distinct elements in the quotient ring $R/pR$ and the multiplicative group $U(R)$ of units of $R$ is:
$$
U(R)=\{ a_0+a_1p+\cdots +a_{s-1}p^{s-1} \mid a_0, a_1,\ldots, a_{s-1} \in \cT, a_0 \ne 0 \}=R \sm pR.
$$
In particular, $a-b \in U(R)$ for distinct $a,b \in \cT$.
\end{itemize}
\end{fact}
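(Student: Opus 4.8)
The plan is to obtain each item from the standard structure theory of finite commutative local rings, applied to $R=\GR(p^s,n)$, whose unique maximal ideal is $pR$ and whose residue field $R/pR$ is isomorphic to $\Fpn$; the one genuinely nontrivial ingredient—the existence of the Teichmuller element—is exactly \cite[Theorem 14.8]{Wan}, which I would cite directly for that point.

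For (1), since $f$ is monic of degree $n$, the elements $1,x,\dots,x^{n-1}$ form a $\Zps$-basis of $R=\Zps[x]/(f(x))$, so $(R,+)\cong\Zps^n=\Z_{p^s}^n$. For (2), the reduction $\bar f\in\Zp[x]$ is irreducible of degree $n$, hence $R/pR\cong\Zp[x]/(\bar f)\cong\Fpn$. I would fix a generator $\bar\xi$ of the cyclic group $(R/pR)^{\times}$ of order $p^n-1$; since $\gcd(p^n-1,p)=1$, the polynomial $T^{p^n-1}-1$ is separable modulo $p$, so Hensel's lemma lifts $\bar\xi$ to a unique root $\xi\in R$ of $T^{p^n-1}-1$, and this $\xi$ has multiplicative order exactly $p^n-1$ because its order divides $p^n-1$ while its reduction already has that order. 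Because $\bar\xi$ generates $R/pR$ over $\Zp$, the element $\xi$ generates $R$ over $\Zps$; replacing $f$ by the minimal polynomial of $\xi$ (which reduces to $\bar f$, hence defines the same Galois ring) we may take $\xi$ to be a root of $f$, and then $\cT=\{0\}\cup\{\xi^i:0\le i\le p^n-2\}$ has $p^n$ elements.

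For (3), the reduction $R\to R/pR$ maps $\cT$ bijectively onto $R/pR$: it sends $0\mapsto 0$ and $\xi^i\mapsto\bar\xi^i$, which sweep out $(R/pR)^{\times}$, and $|\cT|=p^n=|R/pR|$. Hence $\cT$ is a complete set of coset representatives for $pR$ in $R$. Combining this with $(R,+)\cong\Z_{p^s}^n$—so that multiplication by $p$ on $R$ has image $pR$ and kernel $p^{s-1}R$—a routine induction on $s$ along the filtration $R\supset pR\supset\cdots\supset p^sR=0$ shows that every $r\in R$ has a unique $p$-adic expansion $r=a_0+a_1p+\cdots+a_{s-1}p^{s-1}$ with all $a_i\in\cT$: one peels off the unique $a_0\in\cT$ with $r\equiv a_0\pmod{pR}$ and recurses. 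Since $R$ is local with maximal ideal $pR$, an element is a unit precisely when it lies outside $pR$, i.e.\ precisely when $a_0\ne 0$ in this expansion, which is the stated description of $U(R)=R\sm pR$. Finally, distinct $a,b\in\cT$ lie in distinct cosets of $pR$, so $a-b\notin pR$, that is, $a-b\in U(R)$.

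The main—indeed essentially the only—obstacle is the step in (2): producing the Teichmuller lift $\xi$ whose minimal polynomial over $\Zps$ reduces to $\bar f$, i.e.\ reconciling the ``root of the separable polynomial $T^{p^n-1}-1$'' description of $\xi$ with the ``root of $f$'' description required by the statement. Everything else is bookkeeping with the local ring $R$ and its residue field, so in the write-up I would invoke \cite[Theorem 14.8]{Wan} for the existence of $\cT$ and the $p$-adic representation, and record the short deductions above for the remaining clauses.
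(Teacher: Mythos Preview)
Your sketch is correct, but note that the paper does not actually prove this statement: it is labeled a \textbf{Fact} and is justified solely by the sentence ``The following fact follows from \cite[Theorem 14.8]{Wan}.'' So there is no argument in the paper to compare against; your write-up is strictly more detailed than what the authors provide, and you are citing the same source for the one genuinely nontrivial step.

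One small point worth flagging: your handling of the clause ``$\xi$ is a root of $f(x)$'' in (2) is slightly slippery. As you observe, the Hensel lift of a generator of $(R/pR)^\times$ need not be a root of the \emph{given} $f$ unless $\bar f$ is already a primitive polynomial over $\F_p$; your fix (``replacing $f$ by the minimal polynomial of $\xi$'') changes the hypothesis rather than verifying the conclusion. This is really an ambiguity in the stated Fact itself---the intended reading, consistent with \cite[Theorem~14.8]{Wan}, is that $f$ is taken to be a basic primitive polynomial---so it is not a defect in your argument, but if you are writing this up you should make that implicit assumption explicit rather than silently swapping $f$.
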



For a detailed treatment of Galois rings, we refer to \cite[Chapter XVI]{McD74} and \cite[Chapter 14]{Wan}. Apart from Galois rings, our construction of difference basis are also inspired by the configuration named \emph{relative difference sets} from design theory \cite[Chapter VI, Section 10]{BJL99v1}.   

\begin{defn}[Relative difference sets]
Let $G$ be a group of order $mn$ containing a subgroup $N$ of order $n$. A
$k$-subset $R$ of $G$ is called a $(m, n, k, \lambda)$ relative difference set with respect to the group $N$ if the multiset of nonzero differences $\{ g-g^{\pr} \mid g, g^{\pr} \in G, g \ne g^{\pr} \}$ consists of all elements in $G \sm N$ exactly $\lambda$ times and no element in $N$. The subgroup $N$ is called the forbidden subgroup.
\end{defn}

By definition, an $(m,n,k,\lambda)$ relative difference set $R$ in group $G$ relative to subgroup group $N$ forms a $\lambda$-difference basis of $G\sm N$. This observation indicates that relative difference sets serve as suitable initial structures in the construction of difference bases. Below, we list two classical families of relative difference sets that are useful for this purpose.

\begin{ex}
\label{ex-RDS}\
\begin{itemize}
\item[(1)] Let $p$ be an odd prime and $n \ge 1$. Then the subset $\{(x,x^2) \mid x \in \F_{p^n}\} \subseteq (\F_{p^n} \times \F_{p^n},+)$ is a $(p^n,p^n,p^n,1)$ relative difference set in $\Z_p^n \times \Z_p^n$ relative to $\{0\} \times \Z_p^n$.
\item[(2)] Let $R=\GR(4,n)$ and $\cT$ be the Teichmuller system of $R$. Then $\cT$ is a $(2^n,2^n,2^n,1)$ relative difference set in $\Z_4^n$ relative to $2\Z_4^n \cong \Z_2^n$. 
\end{itemize}
\end{ex}
For a comprehensive treatment on relative difference sets, we refer to \cite{P96,PSZ14}.

In \cref{subsec-podd}, we will use Galois rings to mimic the construction in \cref{ex-RDS}(1). On the other hand, the same construction does not apply for $p=2$. Indeed, the fact that the elementary abelian $2$-group $\Z_2^{2n}$ does not admit $(2^n,2^n,2^n,1)$ relative difference set complicates the construction of difference basis when $p=2$. Moreover, elementary abelian $2$-group truly stands out as $\eta_1(\Z_2^{2n})=\eta_2(\Z_2^{2n}) \ge 2^{n+\frac{1}{2}}$. We will address the construction for $p=2$ in \cref{subsec-peven}. In Section~\ref{sec:eta_g}, we utilize Galois rings to construct additive bases and difference bases with prescribed repetition number as sums and differences in a wide range of finite abelian groups. 

\subsection{Estimates on $\eta_1(G)$ with $G$ being $p$-groups for odd prime $p$}
\label{subsec-podd}

\begin{lem}
\label{lem-podd}
Let $p$ be an odd prime, $n \ge 1$, and $s \geq 1$. Then there exists a $1$-difference basis in $\Z_{p^s}^{2n}$ with size at most $p^{sn}+9p^{(s-\frac{1}{2})n}$. Consequently, for sufficiently large $n$, 
$
\eta_{1}(\Z_{p^s}^{2n})=(1+o_{n}(1)) p^{sn}.
$
\end{lem}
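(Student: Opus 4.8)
The goal is to construct a $1$-difference basis in $\Z_{p^s}^{2n}$ of size at most $p^{sn} + 9p^{(s-1/2)n}$. The additive group of the Galois ring $R = \GR(p^s, n)$ is isomorphic to $\Z_{p^s}^n$, so I would work inside $R \times R$, whose additive group is $\Z_{p^s}^{2n}$. Mimicking the relative difference set $\{(x, x^2) : x \in \F_{p^n}\}$ from Example~\ref{ex-RDS}(1), the natural first attempt is the "graph of squaring" set $P = \{(x, x^2) : x \in R\}$. The difference set $P - P$ consists of elements $(x - y, x^2 - y^2) = (x-y, (x-y)(x+y))$. Writing $u = x - y$ and noting $x + y$ ranges over a suitable set as $y$ varies, one sees that every element $(u, v)$ with $u \in U(R)$ a unit is hit exactly once (since $x + y = v u^{-1}$ and $x - y = u$ determine $x, y$ uniquely when $2$ is invertible, which holds as $p$ is odd). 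So $P$ already covers $U(R) \times R$, i.e., all of $\Z_{p^s}^{2n}$ except the ``bad'' set $(R \setminus U(R)) \times R = pR \times R$, which has size $p^{(s-1)n} \cdot p^{sn} = p^{(2s-1)n}$.

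The second step is to cover the leftover set $pR \times R$. Since $pR \cong \Z_{p^{s-1}}^n$ as a group, the leftover is (a coset-union description of) a subgroup isomorphic to $\Z_{p^{s-1}}^n \times \Z_{p^s}^n$ sitting inside $\Z_{p^s}^{2n}$. I would build a small difference basis for this leftover directly: using Result~\ref{res-general}(2) (Finkelstein--Kleitman--Leighton, $\eta_1(G) \le 3\sqrt{|G|}$) applied to the group $pR \times R$ of order $p^{(s-1)n} p^{sn} = p^{(2s-1)n}$, there is a $1$-difference basis $Q_0$ of that subgroup of size at most $3 p^{(s-1/2)n}$. But I need to cover $pR \times R$ as a subset of the full group, including with differences that leave the subgroup — actually, since $pR \times R$ is a subgroup, differences of elements within it stay within it, so a $1$-difference basis of the subgroup $pR \times R$ (as an abstract group) is exactly a subset $Q \subseteq pR \times R$ with $Q - Q \supseteq pR \times R$; that is what FKL gives. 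Then $A = P \cup Q$ is a $1$-difference basis of all of $\Z_{p^s}^{2n}$: every element of $U(R) \times R$ is covered by $P - P \subseteq A - A$, and every element of $pR \times R$ is covered by $Q - Q \subseteq A - A$. The size is $|A| \le |P| + |Q| \le p^{sn} + 3p^{(s-1/2)n}$.

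Wait — the claimed bound has a $9$, not a $3$, so presumably the actual argument does not invoke FKL (which relies on the classification of finite simple groups and the authors may wish to avoid) but instead recursively or directly constructs $Q$, perhaps again via a Galois-ring graph construction on $pR \times R \cong \GR(p^{s-1},n)\text{-additive-group} \times \GR(p^s,n)\text{-additive-group}$, or by iterating the same squaring trick one ``level'' down and bounding the geometric series $\sum_{i \ge 1} p^{(s - 1/2)n} \cdot p^{-(i-1)(\cdot)}$, which sums to something like $3p^{(s-1/2)n}$ times a small constant. Concretely, I would try: cover the top ``unit layer'' with the squaring graph, then restrict to $pR \times R$, and inside it cover its own ``unit layer'' (elements with second coordinate — or an appropriately chosen coordinate — a unit) using a scaled squaring graph, recursing on $s$; summing the $s$ pieces gives total excess $\le (p^{(s-1/2)n})(3 \text{ or so})$, and the constant $9$ absorbs the worst case (small $s$, or the base case $s = 1$ where $\Z_p^{2n}$ is handled by the genuine relative difference set of Example~\ref{ex-RDS}(1) contributing exactly $p^{sn}$ with no excess, or a Sidon-type set of size $O(p^n)$). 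The main obstacle is getting the bookkeeping of the recursion to land under $9p^{(s-1/2)n}$ uniformly in $s$ and $p$, and making sure the ``residual'' subgroup at each stage is genuinely a Galois-ring additive group (so the squaring trick applies) — i.e., identifying $pR$ with $\GR(p^{s-1},n)$ correctly and checking that $2$ remains invertible at every level (true since $p$ is odd). The final asymptotic statement $\eta_1(\Z_{p^s}^{2n}) = (1 + o_n(1))p^{sn}$ is then immediate: the lower bound $\eta_1(\Z_{p^s}^{2n}) \ge \sqrt{|G| - 1 + 1/4} + 1/2 \ge (1-o_n(1))p^{sn}$ comes from Lemma~\ref{lem-lb}, and the upper bound is the construction, since $9p^{(s-1/2)n} = o(p^{sn})$.
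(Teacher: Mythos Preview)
Your approach is essentially the paper's: set $S=\{(x,x^2):x\in R\}$ with $R=\GR(p^s,n)$, observe that $S-S$ covers $U(R)\times R$ (the paper writes the unique solution as $x_1=(a^{-1}b+a)/2$, $x_2=(a^{-1}b-a)/2$), and then patch the leftover subgroup $pR\times R\cong \Z_{p^{s-1}}^n\times\Z_{p^s}^n$ with a small difference basis. The only difference is in how the patch is built, and this explains the constant you puzzled over: the paper \emph{does} invoke Finkelstein--Kleitman--Leighton, but applies it to each factor separately---getting $D_1\subseteq\Z_{p^{s-1}}^n$ of size $\le 3p^{(s-1)n/2}$ and $D_2\subseteq\Z_{p^s}^n$ of size $\le 3p^{sn/2}$---and then uses the direct-product construction, so the patch $D_1\times D_2$ has size $\le 9p^{(s-1/2)n}$. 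Your first instinct, applying FKL directly to the product group $pR\times R$, is cleaner and yields the sharper constant $3$; the recursion you sketched afterward is unnecessary.
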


\begin{proof}
Our construction is inspired by the classical construction for Sidon sets due to Erd\"os and Tur\'an \cite[Section I]{ET41} as well as the \emph{generalized relative difference sets} \cite[Section 3.3]{LPS19}.

Let $R=\GR(p^{s}, n)$. Then $(R \times R,+)=\Z_{p^{s}}^{2n}$. Let $S=\{(x, x^{2}) \mid x \in R\}$. Then $S$ is a subset of $R \times R$ with $|S|=p^{sn}$. We claim that $S$ is a $1$-difference basis of $U(R) \times R$. For $(a,b) \in U(R) \times R$, the equation
$
(x_1,x_1^2)-(x_2,x_2^2)=(a,b)
$
has the unique solution 
$$
\begin{cases}
   x_1=\frac{a^{-1}b+a}{2}, \\
   x_2=\frac{a^{-1}b-a}{2}.
\end{cases}
$$

Note that $(R \times R) \sm (U(R) \times R)=pR \times R$. Recall that $(pR,+)=\Z_{p^{s-1}}^n$ and $(R,+)=\Z_{p^s}^n$. By \cref{res-general}(2), there exists a $1$-difference basis $D_1$ of $\Z_{p^{s-1}}^n$ with $|D_1| \le 3 p^{\frac{(s-1)n}{2}}$ and a $1$-difference basis $D_2$ of $\Z_{p^{s}}^n$ with $|D_2| \le 3 p^{\frac{sn}{2}}$. By \cref{cor-directproduct}(2), $D_1 \times D_2$ is a $1$-difference basis of $pR \times R$. 

Therefore, $S \cup (D_1 \times D_2)$ is a $1$-difference basis of $R \times R$ with 
$$
|S \cup (D_1 \times D_2)| \le |S|+|D_1||D_2| \le p^{sn}+(3p^{\frac{(s-1)n}{2}})(3p^{\frac{sn}{2}})=p^{sn}+9p^{(s-\frac{1}{2})n}.
$$
Therefore, for sufficiently large $n$, 
$\eta_{1}(\Z_{p^s}^{2n})=(1+o_{n}(1)) p^{sn}.$
\end{proof}


Consequently, we can prove the following special case of \cref{thm-weaklyadmissible}.

\begin{cor}
\label{cor-oddgroup}
Let $G$ be a finite abelian group of odd order and $|G|>1$. For sufficiently large $n$,
$
\eta_1(G^{2n})=(1+o_n(1))|G|^n.
$
\end{cor}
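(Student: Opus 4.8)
The plan is to reduce the general odd-order abelian group $G$ to the prime-power case handled in \cref{lem-podd}, and then combine the pieces via the direct product construction. First I would invoke the fundamental theorem of finite abelian groups to write $G = \prod_{i=1}^{m} \Z_{p_i^{s_i}}$, where the $p_i$ are odd primes (not necessarily distinct) and $s_i \ge 1$; since $|G| > 1$ we have $m \ge 1$. Then $G^{2n} \cong \prod_{i=1}^{m} \Z_{p_i^{s_i}}^{2n}$.

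Next, for each $i$, apply \cref{lem-podd} to obtain a $1$-difference basis $D_i$ of $\Z_{p_i^{s_i}}^{2n}$ with $|D_i| \le p_i^{s_i n} + 9 p_i^{(s_i - 1/2)n}$. By iterating \cref{cor-directproduct}(2) across $i = 1, \ldots, m$, the product $D_1 \times D_2 \times \cdots \times D_m$ is a $1$-difference basis of $G^{2n}$, so
$$
\eta_1(G^{2n}) \le \prod_{i=1}^{m} \Bigl( p_i^{s_i n} + 9 p_i^{(s_i - 1/2)n} \Bigr) = \Bigl( \prod_{i=1}^m p_i^{s_i n} \Bigr) \prod_{i=1}^m \Bigl( 1 + 9 p_i^{-n/2} \Bigr) = |G|^n \prod_{i=1}^m \Bigl( 1 + 9 p_i^{-n/2} \Bigr).
$$
Since $m$ is a fixed constant (independent of $n$) and each $p_i \ge 3$, the finite product $\prod_{i=1}^m (1 + 9 p_i^{-n/2})$ tends to $1$ as $n \to \infty$, hence $\eta_1(G^{2n}) \le (1 + o_n(1)) |G|^n$. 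Combined with the matching trivial lower bound $\eta_1(G^{2n}) \ge \sqrt{|G^{2n}| - 1 + 1/4} + 1/2 = (1 + o_n(1)) |G|^n$ from \cref{lem-lb}, this yields $\eta_1(G^{2n}) = (1 + o_n(1)) |G|^n$ for sufficiently large $n$.

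There is essentially no serious obstacle here: the corollary is a clean bookkeeping consequence of \cref{lem-podd} and \cref{cor-directproduct}. The one point requiring a little care is that the number of factors $m$ in the decomposition of $G$ must be treated as a fixed constant while $n \to \infty$, so that the error product stays bounded and collapses to $1$; this is automatic since $G$ is fixed. One could alternatively phrase the argument by first regrouping the decomposition into distinct prime powers to apply \cref{cor-directproduct}, but it is cleaner to just iterate the two-factor version.
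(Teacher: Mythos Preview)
Your proposal is correct and follows essentially the same approach as the paper: decompose $G$ into its cyclic prime-power factors, apply \cref{lem-podd} to each factor, and combine via \cref{cor-directproduct}(2). The only cosmetic difference is that the paper groups repeated prime powers together (writing $G=\prod_{i=1}^t \Z_{p_i^{s_i}}^{u_i}$ and applying \cref{lem-podd} with exponent $2nu_i$), whereas you keep each cyclic factor separate; your explicit mention of the lower bound from \cref{lem-lb} is a nice touch that the paper leaves implicit.
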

\begin{proof}
By the fundamental theorem of finite abelian groups, $G$ can be written as
$G=\prod_{i=1}^t \Z_{p^{s_i}}^{u_i},$
where $t \ge 1$, $p_i^{s_i}$ are distinct odd prime powers, and $u_i \ge 1$ for $1 \le i \le t$. Thus, by \cref{cor-directproduct}(2) and \cref{lem-podd}, for sufficiently large $n$, we have
\[
\eta_1(G^{2n})\leq \prod_{i=1}^t \eta_1(\Z_{p^{s_i}}^{2nu_i})=(1+o_n(1))\prod_{i=1}^t p^{ns_iu_i}=(1+o_n(1))|G|^n.\qedhere
\]
\end{proof}

\subsection{Estimates on $\eta_1(G)$ with $G$ being $2$-groups}
\label{subsec-peven}

In this subsection, we derive analogues of \cref{cor-oddgroup} for finite abelian $2$-groups.


\begin{lem}
\label{lem-Z4n}
Let $n \ge 1$. Then $\eta_{1}(\Z_{4}^{n}) \leq 2^{n}+3 \cdot 2^{\frac{n}{2}}$. In particular, $\eta_{1}(\Z_{4}^{n})=2^{n}(1+o_n(1))$ as $n\to \infty$.
\end{lem}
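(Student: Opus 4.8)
The plan is to mimic the proof of \cref{lem-podd}, but replacing the squaring map with the Teichmuller system of the Galois ring $\GR(4,n)$, which by \cref{ex-RDS}(2) is a $(2^n,2^n,2^n,1)$ relative difference set in $\Z_4^n$ relative to the forbidden subgroup $2\Z_4^n \cong \Z_2^n$. Concretely, let $R = \GR(4,n)$, so that $(R,+) \cong \Z_4^n$ by \cref{fact-GR}(1), and let $\cT$ be its Teichmuller system, a set of size $2^n$. Since $\cT$ is a relative difference set with forbidden subgroup $2R$, the set $\cT$ is a $1$-difference basis of $R \sm 2R = U(R)$: every element of $U(R)$ is represented exactly once as $a - b$ with $a, b \in \cT$ (indeed, by \cref{fact-GR}(3), $a - b \in U(R)$ for distinct $a, b \in \cT$, and the relative-difference-set property supplies each such element exactly once).

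What remains is to cover the complement $(R,+) \sm U(R) = 2R$. Here $2R \cong \Z_2^n$ as an additive group, so by \cref{res-general}(2) there is a $1$-difference basis $D$ of $2R$ with $|D| \le 3\sqrt{|2R|} = 3 \cdot 2^{n/2}$. Then $\cT \cup D$ is a $1$-difference basis of all of $R$: any $x \in U(R)$ is covered by $\cT$, and any $x \in 2R$ (including $x = 0$, which is always covered) is covered by $D$. Hence $\eta_1(\Z_4^n) \le |\cT| + |D| \le 2^n + 3 \cdot 2^{n/2}$. The asymptotic statement $\eta_1(\Z_4^n) = 2^n(1 + o_n(1))$ then follows immediately by combining this upper bound with the trivial lower bound $\eta_1(\Z_4^n) \ge \sqrt{|\Z_4^n|} = 2^n$ coming from \cref{lem-lb} (more precisely, $\eta_1(G) \ge \sqrt{|G|-1+1/4}+1/2 > \sqrt{|G|-1}$, which is $(1-o_n(1))2^n$; in fact one should just note $2^n \le \eta_1(\Z_4^n) \le 2^n(1+o_n(1))$ using any of the standard lower bounds).

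I do not expect any genuine obstacle here: the argument is a direct specialization of the $p$ odd case, with the relative difference set from design theory doing exactly the work that the map $x \mapsto (x, x^2)$ did before. The only point requiring a moment's care is bookkeeping about where $0$ and the other non-units are covered — but since $0 = d - d$ for any $d \in D$ and all of $2R$ is handled by $D$, this is automatic. One could alternatively phrase the covering of $2R$ using \cref{cor-directproduct} as in \cref{lem-podd}, but invoking \cref{res-general}(2) directly on $2R \cong \Z_2^n$ is cleaner and gives the stated constant $3 \cdot 2^{n/2}$.
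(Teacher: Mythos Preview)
Your proposal is correct and essentially identical to the paper's own proof: the paper also takes $\cT$ (the Teichmuller system of $\GR(4,n)$) as a $(2^n,2^n,2^n,1)$ relative difference set covering $\Z_4^n \setminus 2\Z_4^n$, then applies \cref{res-general}(2) to $2\Z_4^n \cong \Z_2^n$ to obtain a $1$-difference basis $S$ of size at most $3\cdot 2^{n/2}$, and concludes that $\cT \cup S$ works.
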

\begin{proof}
Let $R=\GR(4,n)$. Then $(R,+)=\Z_4^n$. By \cref{ex-RDS}(2), the Teichmuller system $\cT$ is a $(2^{n}, 2^{n}, 2^{n}, 1)$ relative difference set in $\Z_{4}^{n}$ relative to $2\Z_{4}^{n} \cong \Z_{2}^{n}$.
Therefore, $\cT$ is a $1$-difference basis of $\Z_4^n \sm 2\Z_4^n$. Moreover, by \cref{res-general}(2), there exists a $1$-difference basis $S$ in $2\Z_{4}^{n} \cong \Z_2^n$ with $|S| \leq 3 \cdot 2^{\frac{n}{2}}$. Therefore, $\cT \cup S$ is a $1$-difference basis of $\Z_4^n$ with $|R \cup S| \leq 2^{n}+3 \cdot 2^{\frac{n}{2}}$. Consequently, $\eta_{1}(\Z_{4}^{n}) \leq 2^{n}+3 \cdot 2^{\frac{n}{2}}$. 
\end{proof}

Utilizing \cref{lem-Z4n}, we can construct $1$-difference bases in groups of the form $\Z_{2^{2s}}^n$.

\begin{lem}
\label{lem-Z22s}
Let $n \ge 4$ and $s \ge 1$. Then $\eta_1(\Z_{2^{2s}}^n)\leq 2^{sn}+(2^{s+1}-1)2^{(s-\frac{1}{2})n}$. In particular,
$\eta_{1}(\Z_{2^{2s}}^{n})=2^{sn}(1+o(1))$ as $n\to \infty$.
\end{lem}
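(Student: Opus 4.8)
The plan is to induct on $s$; the base case $s=1$ is exactly \cref{lem-Z4n}, since there $2^{s+1}-1=3$. So fix $s\ge 2$ and let $R=\GR(2^{2s},n)$, so that $(R,+)\cong\Z_{2^{2s}}^n$ by \cref{fact-GR}(1). The construction lifts the Teichmüller relative difference set of $\GR(4,n)$ and glues it recursively to a difference basis one level down. I would first record the ideal structure of $R$ that is needed: since $\mathrm{Ann}_R(4)=2^{2s-2}R$, the map $r\mapsto 4r$ gives $4R\cong R/2^{2s-2}R\cong\GR(2^{2s-2},n)$, so $(4R,+)\cong\Z_{2^{2(s-1)}}^n$; similarly $(2R,+)\cong\Z_{2^{2s-1}}^n$, and $R\setminus 2R=U(R)$ by \cref{fact-GR}(3); moreover $R/4R\cong\GR(4,n)$. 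Let $\bar\cT$ be the Teichmüller system of $R/4R$; by \cref{fact-GR}(2) it has $2^n$ elements, and by \cref{ex-RDS}(2) it is a $(2^n,2^n,2^n,1)$ relative difference set, so its differences cover every element of $U(R/4R)=(R/4R)\setminus 2(R/4R)$. Finally fix a lift $\cT_0\subseteq R$ of $\bar\cT$, one preimage of each element under reduction modulo $4R$, so $|\cT_0|=2^n$ and $\cT_0$ injects into $R/4R$.

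Next I would build the main part $S_1$. Choose $D'\subseteq R$ so that $4D'\subseteq 4R$ is a minimum $1$-difference basis of $(4R,+)\cong\Z_{2^{2(s-1)}}^n$ (one preimage of each element under multiplication by $4$), and set $S_1:=\cT_0+4D'=\{t+4d:t\in\cT_0,\ d\in D'\}$. If $t+4d=t'+4d'$ with $t,t'\in\cT_0$, $d,d'\in D'$, then $t-t'=4(d'-d)\in 4R$, so $t=t'$ (as $\cT_0$ injects into $R/4R$) and $4d=4d'$; hence $|S_1|=|\cT_0|\,|D'|=2^n\,\eta_1(\Z_{2^{2(s-1)}}^n)$. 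Moreover $S_1-S_1\supseteq U(R)$: for $x\in U(R)$ the image $\bar x$ in $R/4R$ lies in $U(R/4R)$, so there exist $t,t'\in\cT_0$ with $(t-t')-x\in 4R$; writing $x-(t-t')=4d-4d'$ with $d,d'\in D'$ (possible since $4D'$ is a difference basis of $4R$) gives $x=(t+4d)-(t'+4d')$. To finish, I would patch $2R$: by \cref{res-general}(2) there is a $1$-difference basis $S_2$ of $(2R,+)\cong\Z_{2^{2s-1}}^n$ with $|S_2|\le 3\sqrt{|2R|}=3\cdot 2^{(s-\frac{1}{2})n}$, and then $S_1\cup S_2$ is a $1$-difference basis of $R$ ($U(R)$ covered by $S_1$, and $2R$ by $S_2$).

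Plugging the inductive hypothesis $\eta_1(\Z_{2^{2(s-1)}}^n)\le 2^{(s-1)n}+(2^s-1)2^{(s-\frac{3}{2})n}$ into the size bound gives
\[
\eta_1(\Z_{2^{2s}}^n)\le|S_1|+|S_2|\le 2^{sn}+(2^s-1)2^{(s-\frac{1}{2})n}+3\cdot 2^{(s-\frac{1}{2})n}=2^{sn}+(2^s+2)2^{(s-\frac{1}{2})n},
\]
and $2^s+2\le 2^{s+1}-1$ for $s\ge 2$, which closes the induction. (Keeping track of constants, the same argument in fact yields the cleaner bound $\eta_1(\Z_{2^{2s}}^n)\le 2^{sn}+3s\cdot 2^{(s-\frac{1}{2})n}$.) For the asymptotic statement, $(2^{s+1}-1)2^{(s-\frac{1}{2})n}=o(2^{sn})$ for fixed $s$, while \cref{lem-lb} gives the matching lower bound $\eta_1(\Z_{2^{2s}}^n)\ge(1-o_n(1))2^{sn}$; hence $\eta_1(\Z_{2^{2s}}^n)=2^{sn}(1+o_n(1))$ as $n\to\infty$.

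The step I expect to be the main obstacle is the one with no analogue in the odd-exponent case of \cref{lem-podd} (where the graph $\{(x,x^2)\}$ does everything): producing a single subset of $R$ whose differences already cover all the units. The idea is to pull the Teichmüller relative difference set back from the quotient $R/4R\cong\GR(4,n)$ and combine it with an \emph{arbitrary} difference basis of the ideal $4R$, and the two points to check are that the sumset $\cT_0+4D'$ has the expected cardinality and that it covers $U(R)$. Recognizing $(4R,+)\cong\Z_{2^{2(s-1)}}^n$ is what makes the exponent drop by exactly $2$ at each step so the recursion closes, and since $2R$ is a proportionally small ideal, patching it via \cref{res-general}(2) costs only a lower-order term.
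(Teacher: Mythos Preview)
Your proof is correct, and while it shares the paper's inductive skeleton (base case $s=1$ via \cref{lem-Z4n}, inductive step reducing $s$ to $s-1$), the reduction is carried out differently. The paper simply invokes the black-box quotient construction \cref{lem-quotient}(2) along the tower $\Z_{2^{2s}}^n/\Z_{2^{2(s-1)}}^n\cong\Z_4^n$, obtaining the multiplicative bound $\eta_1(\Z_{2^{2s}}^n)\le\eta_1(\Z_{2^{2(s-1)}}^n)\,\eta_1(\Z_4^n)$ and then expanding the product. You instead open up the Galois ring $R=\GR(2^{2s},n)$: lifting the Teichm\"uller relative difference set of $R/4R\cong\GR(4,n)$ to $\cT_0\subseteq R$ and combining it with a difference basis $4D'$ of the ideal $4R\cong\Z_{2^{2(s-1)}}^n$, you show that $\cT_0+4D'$ already covers all units of $R$, and then patch the remaining ideal $2R\cong\Z_{2^{2s-1}}^n$ separately via \cref{res-general}(2). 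This yields the additive recursion
\[
\eta_1(\Z_{2^{2s}}^n)\le 2^n\,\eta_1(\Z_{2^{2(s-1)}}^n)+3\cdot 2^{(s-\frac{1}{2})n}
\]
in place of the paper's multiplicative one, eliminating the cross term $3(2^s-1)2^{(s-1)n}$. As you observe, this gives the sharper estimate $\eta_1(\Z_{2^{2s}}^n)\le 2^{sn}+3s\cdot 2^{(s-\frac{1}{2})n}$, and more notably your induction closes for every $n\ge 1$: the inequality $2^s+2\le 2^{s+1}-1$ needs only $s\ge 2$, whereas the paper must impose $n\ge 4$ precisely to absorb that extra cross term into the stated bound.
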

\begin{proof}
We prove the lemma by induction on $s$. The base case $s=1$ follows from \cref{lem-Z4n}. Assume there exists a $1$-difference basis of size at most $2^{(s-1)n}+(2^s-1)2^{(s-\frac{3}{2})n}$ in $\Z_{2^{2(s-1)}}^n$. Note that $\Z_{2^{2s}}^n/\Z_{2^{2(s-1)}}^n \cong \Z_4^{n}$. Combining \cref{lem-quotient}(2) and \cref{lem-Z4n}, the induction hypothesis, we have 
\begin{align*}
\eta_1(\Z_{2^{2s}}^n) &\le \eta_1(\Z_{2^{2(s-1)}}^n) \eta_1(\Z_4^n) \le (2^{(s-1)n}+(2^s-1)2^{(s-\frac{3}{2})n})(2^{n}+3 \cdot 2^{\frac{n}{2}}) \\
&\le 2^{sn}+(2^s+2)2^{(s-\frac{1}{2})n}+3(2^s-1)2^{(s-1)n} 
<2^{sn}+(2^{s+1}-1)2^{(s-\frac{1}{2})n},
\end{align*}
where the last inequality follows from $n \ge 4$. \qedhere
\end{proof}

In order to mimic the construction in \cref{lem-podd}, we consider a different operation defined on $R \times R$ with $R=\GR(4,n)$. For $n \ge 1$, let $R=\GR(4,n)$. Within $R \times R$, define an operation $*$ as 
$$
(x_1,y_1)*(x_2,y_2)=(x_1+x_2, y_1+y_2+x_1x_2),$$ where $x_1x_2$ is the usual multiplication in the Galois ring $R$ between $x_1$ and $x_2$. We note that the operation $*$ has been used to construct relative difference sets in semifields, see for example \cite[Section 4]{PSZ14}. Below, we show that $R \times R$ forms an abelian group under the operation $*$, denoted by $R * R=(R \times R, *)$.  

\begin{lem}
\label{lem-starop}
For $n \ge 1$ and $R=\GR(4,n)$, the group $R * R=(R \times R, *)$ is isomorphic to $\Z_{8}^{n} \times \Z_{2}^{n}$.
\end{lem}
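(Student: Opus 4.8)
The claim is that $(R \times R, *)$ with $R = \GR(4,n)$ and $(x_1,y_1)*(x_2,y_2) = (x_1+x_2, y_1+y_2+x_1x_2)$ is an abelian group isomorphic to $\Z_8^n \times \Z_2^n$. The plan is to first verify the group axioms directly, then identify the isomorphism type by computing the exponent of the group and counting elements of each order, or better, by exhibiting an explicit homomorphism to $\Z_8^n$ and analyzing its kernel.

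First I would check that $*$ makes $R \times R$ an abelian group. Commutativity is immediate since $x_1 x_2 = x_2 x_1$ in the (commutative) Galois ring $R$ and addition is commutative. The identity is $(0,0)$, and the inverse of $(x,y)$ is $(-x, -y - x^2)$ (one checks $(x,y)*(-x,-y-x^2) = (0, y - y - x^2 + x(-x)) = (0, -2x^2)$ — so actually one must be careful: $-2x^2 = 2x^2$ in characteristic $4$, which need not be $0$). This already signals the key structural feature. Let me instead proceed via associativity: $((x_1,y_1)*(x_2,y_2))*(x_3,y_3)$ has second coordinate $y_1+y_2+y_3 + x_1x_2 + (x_1+x_2)x_3 = y_1+y_2+y_3+x_1x_2+x_1x_3+x_2x_3$, which is symmetric in a way that matches $(x_1,y_1)*((x_2,y_2)*(x_3,y_3))$; so associativity holds. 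The inverse computation shows $(x,y)$ has a genuine inverse (solve $y' = -y - x\cdot(-x) \cdot(\text{corrected})$; the right formula is $(x,y)^{-1} = (-x, -y + x^2)$ since the second coordinate of $(x,y)*(-x,-y+x^2)$ is $y + (-y+x^2) + x(-x) = x^2 - x^2 = 0$). So $(R\times R, *)$ is indeed an abelian group of order $16^n = 2^{4n}$.

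Next I would pin down the isomorphism type. The natural move is to compute $(x,y)^{*k}$, the $k$-fold $*$-power. Induction gives $(x,y)^{*k} = (kx,\, ky + \binom{k}{2}x^2)$. In particular $(x,y)^{*2} = (2x, 2y + x^2)$ and $(x,y)^{*4} = (4x, 4y + 6x^2) = (0, 2x^2)$ (using $4 = 0$ in $R$ and $6x^2 = 2x^2$), and $(x,y)^{*8} = (0, 4x^2) = (0,0)$. So the exponent of $R*R$ divides $8$, and it equals $8$ precisely because for $x \notin pR$ (e.g. $x$ a unit, using \cref{fact-GR}(3)), $2x^2 \ne 0$ in $R$. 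Thus $R*R$ is an abelian $2$-group of order $2^{4n}$ and exponent $8$. To nail down that it is exactly $\Z_8^n \times \Z_2^n$, I would consider the subgroup homomorphism $\phi: R*R \to R/2R \cong \Z_2^n$ given by $\phi(x,y) = x + 2R$; since $\phi((x_1,y_1)*(x_2,y_2)) = x_1+x_2 + 2R$ (the $x_1x_2$ term is irrelevant mod... no, wait, $\phi$ only sees the first coordinate, so this is a homomorphism). Hmm, a cleaner approach: count the number of elements killed by $2$, i.e. solutions of $(2x, 2y+x^2) = (0,0)$, which forces $x \in 2R$ and then $x^2 \in 4R = 0$ so $2y = 0$, i.e. $y \in 2R$; this gives $|2R|^2 = (2^n)^2 = 2^{2n}$ elements of order dividing $2$. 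For $\Z_8^n \times \Z_2^n$ the number of elements of order dividing $2$ is also $2^n \cdot 2^n = 2^{2n}$. Similarly the number of elements of order dividing $4$ is the number of solutions of $(4x, 4y+6x^2)=(0,0)$, i.e. $2x^2 = 0$; since $2x^2 = 0 \iff x^2 \in 2R \iff x \in pR = 2R$ (as $R/2R$ is a field, so $x^2 \in 2R \iff x \in 2R$), we get $|2R| \cdot |R| = 2^n \cdot 2^{2n} = 2^{3n}$, matching $\Z_8^n \times \Z_2^n$ which has $4^n \cdot 2^n = 2^{3n}$ such elements. Since a finite abelian $2$-group is determined by the sequence of counts $|\{g : 2^k g = 0\}|$ for all $k$, and these match for $k = 0,1,2,3$ (and both sides have exponent $8$, so $k \ge 3$ contributes nothing new), the two groups are isomorphic.

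**The main obstacle.** The routine part is checking the group axioms; the genuinely delicate point is correctly identifying the isomorphism type, and in particular getting the characteristic-$4$ arithmetic right — the appearance of $2x^2$ rather than $0$ in $(x,y)^{*4}$ is exactly what produces a $\Z_8$ factor rather than $\Z_4$, and one must use the field structure of $R/2R$ (via \cref{fact-GR}(3)) to conclude $x^2 \in 2R \iff x \in 2R$, which is what makes the order counts come out to powers of $2$ with the right exponents. An alternative, perhaps cleaner, route to avoid the counting: exhibit explicit generators. Over $R$, fix a $\Z_4$-module basis (equivalently a $\Z$-basis of $R \cong \Z_4^n$), say $e_1,\dots,e_n$ with $e_i^2$ expressible in this basis; the elements $(e_i, 0)$ generate a large subgroup, and one checks $(e_i,0)^{*2} = (2e_i, e_i^2)$ has order $4$ (so $(e_i,0)$ has order $8$), while the elements $(0, f_j)$ for a basis $f_1,\dots,f_n$ of $2R \cong \Z_2^n$ are central of order $2$ and independent from the first family modulo... — this is more hands-on but requires tracking the basis carefully. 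I expect the counting argument above to be the shortest path to write up rigorously.
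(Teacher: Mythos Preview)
Your proposal is correct and takes essentially the same approach as the paper: verify the group axioms, compute the iterated $*$-powers to obtain $(x,y)^{*2}=(2x,2y+x^2)$, $(x,y)^{*4}=(0,2x^2)$, $(x,y)^{*8}=(0,0)$, deduce the exponent is $8$, and then determine the isomorphism type by counting. The only cosmetic difference is that the paper counts elements of each exact order $1,2,4,8$, whereas you count elements annihilated by $2^k$ for each $k$ and invoke the fact that these counts determine a finite abelian $2$-group; the two bookkeeping schemes are equivalent, and your use of the field structure of $R/2R$ to get $x^2\in 2R\iff x\in 2R$ is exactly what the paper is implicitly using.
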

\begin{proof}
It is routine to verify that $R*R$ forms an abelian group under the operation $*$ with identity $(0,0)$. For $i \geq 1$ and $(x, y) \in R * R,$ define
$$
(x, y)^{i}=\underbrace{(x, y) *(x, y) * \ldots *(x, y)}_{\text{$i$ terms}}.
$$
Note that for $(x, y) \in R * R$,
\begin{align*}
 (x, y)^{2}=(2 x, 2 y+x^{2}), \quad 
 (x, y)^{4}=(0,2 x^{2}), \quad
 (x, y)^{8}=(0,0).
\end{align*}
Thus, each element of $R*R$ has order divisible by $8$. Moreover, since $(1,0)$ has order $8$, then the exponent $\exp(R*R)=8$. In order to show that $R*R \cong \Z_8^n \times \Z_2^n$, it suffices to count the number of elements with order $1$, $2$, $4$, and $8$, respectively. 

\begin{itemize}
\item[(a)] The number of order $1$ elements is $1$, corresponding to the identity $(0,0)$.
\item[(b)] An element $(x,y)$ has order $2$ if and only if $(x,y) \ne (0,0)$ and $(x,y)^2=(2x,2y+x^2)=(0,0)$, which is equivalent to $(x,y) \in (2R \times 2R) \sm \{(0,0)\}$. Therefore, the number of order $2$ elements is $2^{2n}-1$.
\item[(c)] An element $(x,y)$ has order $4$ if and only if $(x,y) \notin 2R \times 2R$, and $(x,y)^4=(0,2x^2)=(0,0)$, which is equivalent to $(x,y) \in 2R \times (R \sm 2R)$. Therefore, the number of order $4$ elements is $2^n(2^{2n}-2^n)=2^{2n}(2^n-1)$.
\item[(d)] The number of order $8$ elements is $4^{2n}-1-(2^{2n}-1)-2^{2n}(2^n-1)=2^{4n}-2^{3n}$.
\end{itemize}
Combining the above counts and $\exp(R*R)=8$, we have $(R \times R, *) \cong \Z_{8}^{n} \times \Z_{2}^{n}$.
\end{proof}

Over $R*R$, we propose the following construction, which is analogous to that of \cref{lem-podd}.

\begin{lem}
\label{lem-Z8n2n}
Let $n \ge 1$. Then there exists a $1$-difference basis of $\Z_8^n \times \Z_2^n$ with size at most $4^n+3 \cdot 2^{\frac{3n}{2}}+9 \cdot 2^n$. For sufficiently large $n$, $\eta_{1}(\Z_{8}^{n} \times \Z_{2}^{n})=(1+o_{n}(1)) 4^{n}$.
\end{lem}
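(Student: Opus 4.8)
The plan is to mimic the argument of \cref{lem-podd} with the Galois ring $R=\GR(4,n)$ and the twisted group $R*R \cong \Z_8^n \times \Z_2^n$ from \cref{lem-starop} playing the role of $\Z_{p^s}^{2n}$. First I would take the set $S=\{(x,x^2) \mid x \in R\}$ as a candidate partial difference basis inside $R*R$; note $|S|=2^{2n}=4^n$. The key computation is to work out the $*$-difference of two such elements: using the formula $(x_1,y_1)*(x_2,y_2)=(x_1+x_2,y_1+y_2+x_1x_2)$ and hence $-(x_2,x_2^2)=(-x_2,-x_2^2+x_2^2)=(-x_2,0)$ wait---more carefully, $(x_2,x_2^2)^{-1}=(-x_2,-x_2^2 - x_2(-x_2)) = (-x_2, -x_2^2+x_2^2)=(-x_2,0)$ in $R*R$, so that $(x_1,x_1^2)*(x_2,x_2^2)^{-1}=(x_1-x_2,\; x_1^2+0+x_1(-x_2))=(x_1-x_2,\; x_1^2-x_1x_2)=(x_1-x_2,\;x_1(x_1-x_2))$. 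Setting this equal to a target $(a,b)$ with $a=x_1-x_2 \in U(R)$, we solve $x_1 = a^{-1}b$ uniquely (since $x_1 a = b$), then $x_2 = x_1 - a$; hence $S$ is a $1$-difference basis of the set $U(R)\times R$ inside $R*R$. (I would double check the precise inverse formula and whether any factor of $2$ intervenes, as it did in \cref{lem-podd}; here the characteristic subtlety is that $R$ has characteristic $4$, but the linear-in-$x_1$ nature of the second coordinate means no division by $2$ is needed, which is the whole point of using the twisted operation instead of the naive one.)

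Next I would identify the complement. The ``bad'' differences are exactly those $(a,b)$ with $a \in pR = 2R$, i.e.\ the elements of $R*R$ lying over $2R \times R$; I need to check that under the isomorphism $R*R \cong \Z_8^n \times \Z_2^n$ this complement is a subgroup isomorphic to some group whose order is $|2R|\cdot|R| = 2^n \cdot 4^n = 2^{3n}$ --- more precisely it should be the set $\{(x,y) : x \in 2R\}$, which is a subgroup of $R*R$ of index $2^n$, hence of order $2^{3n}$, and I would determine its isomorphism type (it should be $\Z_4^n \times \Z_2^n$, since for $x \in 2R$ we have $x^2 = 0$ in $\GR(4,n)$ so the twisting term vanishes and the operation degenerates to coordinatewise addition on $2R \times R \cong \Z_2^n \times \Z_4^n$). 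Then I invoke \cref{res-general}(2) to get a $1$-difference basis of this complement of size at most $3\sqrt{2^{3n}} = 3\cdot 2^{3n/2}$, and also a $1$-difference basis of the full group $\Z_8^n \times \Z_2^n$ --- wait, I only need to cover the complement, but covering the identity element requires a trivial extra point, which is absorbed into the bounds.

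Finally I would assemble: $S$ together with the small basis $D$ of the complement (of size $\le 3\cdot 2^{3n/2}$) covers every element of $R*R$; however the stated bound is $4^n + 3\cdot 2^{3n/2} + 9\cdot 2^n$, so there must be a further decomposition --- I suspect the complement $\Z_4^n \times \Z_2^n$ is itself handled not by the black-box \cref{res-general}(2) but by the direct-product construction \cref{cor-directproduct}(2) applied to $\Z_4^n$ (via \cref{lem-Z4n}, giving $2^n + 3\cdot 2^{n/2}$) and $\Z_2^n$ (via \cref{res-general}(2), giving $3\cdot 2^{n/2}$), whose product is $(2^n+3\cdot 2^{n/2})(3\cdot 2^{n/2}) = 3\cdot 2^{3n/2} + 9\cdot 2^n$; adding $|S|=4^n$ gives exactly $4^n+3\cdot 2^{3n/2}+9\cdot 2^n$. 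So the final bound is $|S| + \eta_1(\Z_4^n)\eta_1(\Z_2^n) \le 4^n + 3\cdot 2^{3n/2} + 9\cdot 2^n$, and dividing by $4^n$ shows $\eta_1(\Z_8^n\times\Z_2^n) = (1+o_n(1))4^n$. The main obstacle I anticipate is getting the difference formula in $R*R$ exactly right --- verifying that the second coordinate of $(x_1,x_1^2)*(x_2,x_2^2)^{-1}$ is genuinely linear in $x_1$ once $a=x_1-x_2$ is fixed, so that the target equation has a unique solution with no obstruction from non-invertibility of $2$ --- and correctly pinning down the isomorphism type of the index-$2^n$ complement subgroup so that the dimension counts feeding into \cref{cor-directproduct}(2) come out as $\Z_4^n \times \Z_2^n$ rather than something larger.
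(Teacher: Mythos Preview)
Your proposal is correct and follows essentially the same route as the paper: work in $R*R$ with $R=\GR(4,n)$, take $S=\{(x,x^2):x\in R\}$, compute $(x_1,x_1^2)*(x_2,x_2^2)^{-*}=(x_1-x_2,\,x_1(x_1-x_2))$ to see that $S$ covers $U(R)\times R$, and then cover the remaining subgroup $2R\times R$ by combining a $1$-difference basis of $\Z_2^n$ (via \cref{res-general}(2)) with one of $\Z_4^n$ (via \cref{lem-Z4n}). The paper does the last step by directly verifying that $(t_1,w_1)*(t_2,w_2)^{-*}=(t_1-t_2,w_1-w_2)$ whenever $t_1,t_2\in 2R$ (since $t_1t_2=0$ and $t_2^2=0$), whereas you phrase it as observing that $(2R\times R,*)\cong \Z_2^n\times\Z_4^n$ and then invoking \cref{cor-directproduct}(2); these are the same argument. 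One small remark: your intermediate thought of applying \cref{res-general}(2) directly to the order-$2^{3n}$ subgroup would in fact give the \emph{tighter} bound $4^n+3\cdot 2^{3n/2}$, so the extra $9\cdot 2^n$ in the stated lemma is an artifact of the paper's more explicit product construction, not a necessity.
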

\begin{proof}
Let $R=\GR(4, n)$. Then by \cref{lem-starop}, $(R \times R, *) \cong \Z_{8}^{n} \times \Z_{2}^{n}$. For $(x,y) \in (R \times R, *)$, its inverse, denoted by $(x,y)^{-*}$, is equal to $(-x,x^2-y)$.

Let $S=\{(x, x^2) \mid x \in R\}$. Note that $(x, x^2)^{-*}=(-x, 0)$. We claim that $S$ is a $1$-difference basis of $(U(R) \times R, *)$ with $|S|=4^{n}$. Indeed, for $(a, b) \in U(R) \times R$, the equation
$$
(a, b)=(x_1,x_1^2)*(x_2,x_2^{2})^{-*}=(x_1, x_1^2)*(-x_2, 0)=(x_1-x_2, x_1^2-x_1x_2)
$$
has a unique solution
$$
\begin{cases}
x_1=a^{-1}b, \\
x_2=a^{-1}b-a.
\end{cases}
$$

Note that $2R \times R=(R \times R) \sm (U(R) \times R)$, and $(2R,+) \cong \Z_2^n$, $(R,+) =\Z_4^n$. We proceed to construct a $1$-difference basis of $(2R \times R, *)$, utilizing $1$-difference bases in $\Z_2^n$ and $\Z_4^n$. According to \cref{res-general}(2), there exists a $1$-difference basis $T$ in $(2R,+)$  with $|T| \leq 3 \cdot 2^{\frac{n}{2}}$. By \cref{lem-Z4n}, there exists a $1$-difference basis $W$ in $(R,+)$ with $|W| \leq 2^{n}+3 \cdot 2^{\frac{n}{2}}$. We claim that $T \times W$ forms a $1$-difference basis in $(2R \times R,*)$, with $|T \times W| \le 3 \cdot 2^{\frac{3n}{2}} + 9 \cdot 2^n$. For each $(a, b) \in 2 R \times R$, there exist $t_1, t_2 \in 2R$, such that $a=t_1-t_2$ and there exist $w_1, w_2 \in W$, such that $b=w_1-w_2$. Since $t_1, t_2 \in 2R$, we have $(t_2, w_2)^{-*}=(-t_{2},-w_2)$ and $t_1t_2=0$. It follows that
\begin{align*}
(t_1,w_1)*(t_2,w_2)^{-*}=&(t_1,w_1)*(-t_2,-w_2)=(t_1-t_2,w_1-w_2+t_1(-t_2)) \\
                        =&(t_1-t_2,w_1-w_2)=(a, b).
\end{align*}
Consequently, $S \cup (T \times W)$ forms a $1$-difference basis in $(R \times R, *)$ with $|S\cup(T \times W)| \le 4^{n}+3 \cdot 2^{\frac{3n}{2}} + 9 \cdot 2^n$. Therefore, for sufficiently large $n$, $\eta_{1}(\Z_{8}^{n} \times \Z_{2}^{n})=4^{n}(1+o_{n}(1))$.  
\end{proof}

Now we are ready to prove the following analogy to \cref{lem-podd}.

\begin{cor}
\label{cor-peven}
Let $n \ge 9$ and $s \ge 2$. Then there exists a $1$-difference basis in $\Z_{2^s}^{2n}$ with size at most $2^{sn}+(2^s-1)\cdot 2^{(s-\frac{1}{2})n}$. For sufficiently large $n$,
$
\eta_{1}(\Z_{2^s}^{2n})=2^{sn}(1+o_{n}(1)).
$
\end{cor}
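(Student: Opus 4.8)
The plan is to handle the two parities of $s$ separately, in both cases reducing to groups whose $1$-difference bases we have already constructed and then invoking the quotient construction (\cref{lem-quotient}(2)). First suppose $s = 2\sigma$ is even. Then $\Z_{2^s}^{2n} = \Z_{2^{2\sigma}}^{2n}$, and since $2n \ge 18 \ge 4$, \cref{lem-Z22s} (applied with $n$ replaced by $2n$) directly yields a $1$-difference basis of size at most $2^{\sigma\cdot 2n} + (2^{\sigma+1}-1)2^{(\sigma - \frac12)\cdot 2n} = 2^{sn} + (2^s - 1)2^{(s-1)n}$; since $(s-1)n \le (s-\tfrac12)n$ for $n \ge 0$, this is at most $2^{sn} + (2^s-1)2^{(s-\frac12)n}$, as desired. (One should double-check the exponent bookkeeping here — the cleaner route may be to note $2^{(s-1)n} \le 2^{(s-\frac12)n}$ only gives the weaker-looking bound in the right direction, so a small recomputation of the constant may be needed, but it is routine.)

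Next suppose $s = 2\sigma+1$ is odd, with $\sigma \ge 1$ since $s \ge 2$ forces $s \ge 3$. The idea is to build $\Z_{2^s}^{2n} = \Z_{2^{2\sigma+1}}^{2n}$ as an extension with quotient isomorphic to a power of $\Z_8^n \times \Z_2^n$, using \cref{lem-Z8n2n} as the new base case in place of \cref{lem-Z4n}. Concretely, observe that $\Z_{2^{2\sigma+1}}^{2n} \cong \Z_8^n \times \Z_2^n$ when $\sigma = 1$ (here we need the grouping $\Z_8^{2n}$ versus $\Z_8^n \times \Z_2^n \times \Z_8^n/\cdots$ — more precisely $\Z_8^{2n}$ is \emph{not} $\Z_8^n\times\Z_2^n$, so the genuine base case uses a subgroup $N \cong \Z_{2^{2\sigma-1}}^{2n}$... ). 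The robust version: proceed by induction on $\sigma$. For $\sigma=1$, write $\Z_8^{2n} = \Z_8^n \times \Z_8^n$ and note $\Z_8^n \times \Z_8^n$ has a subgroup $H \cong \Z_2^n$ (inside the second factor) with $\Z_8^n\times\Z_8^n / H \cong \Z_8^n \times \Z_4^n$; iterating is awkward. Cleaner: use directly that $\Z_{2^s}^{2n}/\Z_{2^{s-2}}^{2n} \cong \Z_4^{2n}$ for the even-$s$ induction, and for odd $s$ use $\Z_{2^s}^{2n}/\Z_{2^{s-3}}^{2n} \cong (\Z_8^n\times\Z_8^n)$; since $\Z_8^n \times \Z_8^n$ is not of the form $\Z_8^n\times\Z_2^n$ covered by \cref{lem-Z8n2n}, instead take $N \cong \Z_{2^{s-3}}^{2n} \times \Z_4^n$ (viewing $\Z_{2^s}^{2n}$ suitably) so that the quotient is $\Z_8^n \times \Z_2^n$ exactly. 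Then \cref{lem-quotient}(2) with $\eta_1(N)$ bounded by the even-$s$ case (\cref{lem-Z22s}) times $\eta_1(\Z_4^n)$ from \cref{lem-Z4n}, and $\eta_1(\Z_8^n\times\Z_2^n)$ from \cref{lem-Z8n2n}, gives
$$
\eta_1(\Z_{2^s}^{2n}) \le \eta_1\!\big(\Z_{2^{s-3}}^{2n}\times \Z_4^n\big)\cdot \eta_1\!\big(\Z_8^n\times\Z_2^n\big) \le \big(2^{(s-3)n}+\cdots\big)\big(4^n + 3\cdot 2^{3n/2} + 9\cdot 2^n\big),
$$
and expanding, the leading term is $2^{(s-3)n}\cdot 4^n = 2^{(s-1)n+n}=2^{sn}$, with all lower-order terms absorbable into $(2^s-1)2^{(s-\frac12)n}$ once $n \ge 9$.

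The routine part is the constant-chasing: in each inductive step one multiplies a bound of the form $2^{an}+c\cdot 2^{(a-\frac12)n}$ by $2^{bn}+c'\cdot 2^{(b-\frac12)n}$, and one must verify the cross terms $c\cdot 2^{(a+b-\frac12)n}$, $c'\cdot 2^{(a+b-\frac12)n}$, and $cc'\cdot 2^{(a+b-1)n}$ sum to at most $(2^s-1)2^{(s-\frac12)n}$; the threshold $n \ge 9$ is exactly what makes the $2^{(a+b-1)n}$ term negligible against $2^{(a+b-\frac12)n}$. The asymptotic statement $\eta_1(\Z_{2^s}^{2n}) = 2^{sn}(1+o_n(1))$ is then immediate from the displayed upper bound together with the trivial lower bound $\eta_1(\Z_{2^s}^{2n}) \ge \sqrt{|G|-1}+\tfrac12 \sim 2^{sn}$ from \cref{lem-lb} (with $g=1$).

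\medskip

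\textbf{Main obstacle.} The genuine difficulty is organizing the induction on $s$ so that every quotient that appears is a group for which we \emph{already} have a sharp $1$-difference basis — namely $\Z_4^n$ (\cref{lem-Z4n}) and $\Z_8^n\times\Z_2^n$ (\cref{lem-Z8n2n}) — rather than some other $2$-group of the right order for which no good construction is available. Getting the subgroup/quotient decomposition of $\Z_{2^s}^{2n}$ exactly right for odd $s$ (so that one factor of $\Z_8^n\times\Z_2^n$, and not $\Z_8^{2n}$ or $\Z_4^n\times\Z_2^n$, is what shows up) is the step that requires care; once the decomposition is pinned down, the rest is the bookkeeping described above.
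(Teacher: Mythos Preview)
Your even-$s$ route via \cref{lem-Z22s} is fine (and the constant check you flag as ``routine'' really is: $(2^{\sigma+1}-1)2^{(s-1)n}\le (2^{s}-1)2^{(s-\frac12)n}$ since $2^{\sigma+1}-1\le 2^{2\sigma}-1$ for $\sigma\ge 1$). The gap is in the odd-$s$ step. For $s\ge 5$ your proposed subgroup $N\cong \Z_{2^{s-3}}^{2n}\times \Z_4^n$ simply does not sit inside $\Z_{2^s}^{2n}$: it has $2^{3n}$ elements of order dividing $2$, whereas $\Z_{2^s}^{2n}$ has only $2^{2n}$. The subgroup that actually gives quotient $\Z_8^n\times\Z_2^n$ is $8\Z_{2^s}^n\times 2\Z_{2^s}^n\cong \Z_{2^{s-3}}^n\times \Z_{2^{s-1}}^n$, and bounding $\eta_1$ of \emph{that} throws you back into an induction you have not set up.

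The paper avoids this by not trying to produce a $\Z_8^n\times\Z_2^n$ quotient at every odd $s$. It runs a single induction on $s$ with two base cases, $s=2$ (via \cref{lem-Z4n}) and $s=3$ (via $\Z_8^{2n}/\Z_4^n\cong \Z_8^n\times\Z_2^n$, exactly the decomposition you found; this is where $n\ge 9$ enters). For \emph{every} $s\ge 4$, regardless of parity, it then uses $\Z_{2^s}^{2n}/\Z_4^{2n}\cong \Z_{2^{s-2}}^{2n}$ together with \cref{lem-quotient}(2) to drop $s$ by $2$. You actually wrote this reduction down but confined it to even $s$; the fix is just to use it for odd $s$ as well, so that \cref{lem-Z8n2n} is invoked exactly once, at the $s=3$ base case, and never again.
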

\begin{proof}
Proof by induction on $s$. The base case $s=2$ follows from \cref{lem-Z4n}.  
For $s=3$, note that $\Z_{8}^{2n}/\Z_{4}^{n} \cong \Z_{8}^{n} \times \Z_{2}^{n}$. Then by \cref{lem-quotient}(2), \cref{lem-Z4n}, and \cref{lem-Z8n2n}, we have
\begin{align*}
\eta_{1}(\Z_{8}^{2n}) &\leq \eta_{1}(\Z_{8}^{n} \times \Z_{2}^{n}) \eta_{1}(\Z_{4}^{n})\leq (4^n+3 \cdot 2^{\frac{3n}{2}}+9 \cdot 2^n)(2^{n}+3 \cdot 2^{\frac{n}{2}}) \\
&=2^{3n}+6 \cdot 2^{\frac{5n}{2}}+18 \cdot 2^{2n}+27 \cdot 2^{\frac{3n}{2}} \le 2^{3n}+7\cdot 2^{\frac{5n}{2}},
\end{align*}
where the last inequality holds true if and only if $n \ge 9$. 

Assume for each $t$ with $3 \leq t<s$, we have $\eta_{1}(\Z_{2^t}^{2n}) \le 2^{tn}+(2^t-1)2^{(t-\frac{1}{2})n}$. Employing \cref{lem-quotient}(2) with $\Z_{2^s}^{2 n}/\Z_{4}^{2 n} \cong \Z_{2^{s-2}}^{2 n}$, we have
\begin{align*}
\eta_{1}(\Z_{2^s}^{2n}) &\leq \eta_{1}(\Z_{2^{s-2}}^{2n})\eta_{1}(\Z_{4}^{2n}) \leq (2^{(s-2)n}+(2^{s-2}-1)2^{(s-2-\frac{1}{2})n})(2^{2n}+3\cdot 2^\frac{3n}{2}) \\
&= 2^{sn}+(2^{s-2}+2)2^{(s-\frac{1}{2})n}+3(2^{s-2}-1)2^{(s-1)n} \le 2^{sn}+(2^s-1)2^{(s-\frac{1}{2})n}.
\end{align*}
Moreover, for sufficiently large $n$, $\eta_{1}(\Z_{2^s}^{2n})=2^{sn}(1+o_{n}(1))$.
\end{proof}

\begin{rem}
The condition $s \ge 2$ in \cref{cor-peven} is essential. For $s=1$, $\eta_1(\Z_2^{2n})=\eta_2(\Z_2^{2n}) \ge 2^{n+\frac{1}{2}}$. 
\end{rem}

Finally, we prove the following analogy of \cref{cor-oddgroup}. Note that a finite abelian $2$-group with all invariant factors divisible by $4$ is necessarily a weakly admissible $2$-group. The following result is a special case of \cref{thm-weaklyadmissible}. 

\begin{cor}
\label{cor-2group}
Let $G$ be a finite abelian $2$-group whose invariant factors are all divisible by $4$. Then for sufficiently large $n$, 
$
\eta_1(G^{2n})=(1+o_n(1))|G|^n.
$
\end{cor}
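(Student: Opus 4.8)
The plan is to reduce the general case to the prime-power building blocks already established, namely \cref{lem-Z22s} and \cref{cor-peven}, via the direct product construction of \cref{cor-directproduct}(2). First I would invoke the fundamental theorem of finite abelian groups to write $G = \prod_{i=1}^{t} \Z_{2^{s_i}}^{u_i}$ where each $s_i \ge 2$ (this is exactly the hypothesis that every invariant factor is divisible by $4$), $t \ge 1$, and $u_i \ge 1$. Then $G^{2n} = \prod_{i=1}^{t} \Z_{2^{s_i}}^{2n u_i}$, and repeatedly applying \cref{cor-directproduct}(2) gives
$$
\eta_1(G^{2n}) \le \prod_{i=1}^{t} \eta_1\bigl(\Z_{2^{s_i}}^{2n u_i}\bigr).
$$

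Next I would bound each factor. For a fixed $i$, the group is $\Z_{2^{s_i}}^{2(n u_i)}$; since $s_i \ge 2$, either $s_i$ is even, say $s_i = 2\sigma_i$, in which case \cref{lem-Z22s} applies (with exponent $2\sigma_i$ and rank $2nu_i = 2 \cdot (n u_i)$, legitimate once $n u_i \ge 4$, hence for $n$ large) to give $\eta_1(\Z_{2^{s_i}}^{2nu_i}) = 2^{\sigma_i \cdot 2 n u_i}(1 + o_n(1)) = |\Z_{2^{s_i}}^{u_i}|^{n}(1+o_n(1))$; or $s_i \ge 3$ is arbitrary, in which case \cref{cor-peven} applies directly (valid for $nu_i \ge 9$, hence for $n$ large) and yields $\eta_1(\Z_{2^{s_i}}^{2nu_i}) = 2^{s_i n u_i}(1+o_n(1)) = |\Z_{2^{s_i}}^{u_i}|^n (1+o_n(1))$. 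In fact \cref{cor-peven} alone covers every $s_i \ge 2$, so I would just cite it uniformly. Multiplying the $t$ estimates and using $|G| = \prod_{i=1}^t 2^{s_i u_i}$ gives
$$
\eta_1(G^{2n}) \le \prod_{i=1}^{t} |\Z_{2^{s_i}}^{u_i}|^{n}(1+o_n(1)) = |G|^{n}(1+o_n(1)).
$$
The matching lower bound $\eta_1(G^{2n}) \ge \sqrt{|G^{2n}|} \cdot (1+o_n(1)) = |G|^n(1+o_n(1))$ is immediate from \cref{lem-lb} (the $g=1$ case of the difference-basis bound), so the two bounds together give $\eta_1(G^{2n}) = (1+o_n(1))|G|^n$.

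There is essentially no serious obstacle here; the proof is a bookkeeping exercise assembling earlier lemmas. The only mild point to be careful about is that the error terms in \cref{lem-Z22s} and \cref{cor-peven} are of the shape $2^{s_i n u_i}(1 + O(2^{-nu_i/2}))$, so when one multiplies $t$ such factors the combined relative error is still $O(2^{-cn})$ for a constant $c>0$ depending on $G$ but not on $n$; since $t$ is fixed this is harmless, and I would spell this out in one sentence. One should also note that the finitely many small-$n$ constraints ($nu_i \ge 9$ for all $i$) are absorbed into "for sufficiently large $n$". I would close by remarking that this is precisely the $2$-group, all-invariant-factors-divisible-by-$4$ special case of \cref{thm-weaklyadmissible}, since such a $G$ has $v = 0$ and $w=0$ in \cref{eqn-admissible} and hence is trivially (weakly) admissible.
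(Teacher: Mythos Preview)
Your proof is correct and follows essentially the same route as the paper: decompose $G$ via the fundamental theorem, apply \cref{cor-directproduct}(2) to bound $\eta_1(G^{2n})$ by the product of the $\eta_1(\Z_{2^{s_i}}^{2nu_i})$, and invoke \cref{cor-peven} for each factor. One small slip in your closing remark: such a $G$ need not have $w=0$ in \cref{eqn-admissible} (e.g.\ $\Z_8$ contributes to the odd-exponent part), but $v=0$ alone already makes the weakly admissible inequality trivial, so your conclusion stands.
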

\begin{proof}
Since $G$ is a finite abelian $2$-group whose factors are all divisible by $4$, by the fundamental theorem of finite abelian groups, we can write $G=\prod_{i=1}^t \Z_{2^{s_i}}^{u_i},$
where $t \ge 1$, $2 \le s_1 < s_2 < \cdots <s_t$, and $u_i \ge 1$ for each $1 \le i \le t$. By \cref{cor-directproduct}(2) and \cref{cor-peven}, for sufficiently large $n$, 
\[
\eta_1(G^{2n}) \le \prod_{i=1}^t \eta_1(\Z_{2^{s_i}}^{2u_in})= (1+o_n(1))|G|^n.\qedhere
\]
\end{proof}

\subsection{Estimates on $\eta_g(G^{2n})$ and $\nu_g(G^{2n})$}\label{sec:eta_g}

In this subsection, we present explicit constructions of $g$-additive and $g$-difference bases in $G^{2n}$ for some special integers $g \ge 1$.

\begin{lem}\label{lem-PCPextension}
Let $G$ be a finite abelian group with $G=\prod_{i=1}^{m} \Z_{p_{i}^{s_i}}^{n_{i}}$, where $n_{i} \geqslant 1$ and $p_{i}^{s_{i}}$ being distinct prime powers. Suppose $m+2 \leq k\leq \min \{ p_{i}^{n_i}-1 \mid 1 \leq i \leq m\}$, then there exists a subset $U \subseteq G \times G$ with $|U|\leq k\sum_{I \subsetneq [m]} \prod_{i \in [m] \sm I} (p_i^{s_in_i}-1)$ such that $U$ is a $(k-m)(k-m-1)$-additive basis and a $(k-m)(k-m-1)$-difference basis of $G \times G$. 
\end{lem}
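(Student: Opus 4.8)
The plan is to mimic the parabola construction $\{(x,x^2)\}$ from \cref{ex-RDS}(1) and \cref{lem-podd}, but with two modifications: first, pass from $G$ to a ring structure where squaring behaves well (a product of Galois rings $R=\prod_{i=1}^m \GR(p_i^{s_i},n_i)$ with $(R,+)\cong G$), and second, replace the single parabola by a union of $k$ parabolas $\{(x, x^2 + c_j x) \mid x \in R\}$ for $k$ suitably chosen ``slopes'' $c_1,\dots,c_k \in R$, together with a pruning step that removes the ``bad'' fibers so that the size bound is met. The point of taking $k$ translated parabolas is multiplicity: an element $(a,b) \in R\times R$ with $a$ a unit lies on $\{(x,x^2+c_jx)\}$-differences in a controlled number of ways, and as $j$ ranges over the $k$ slopes the number of valid representations will be at least $(k-m)(k-m-1)$ once we discard the at most $m$ slopes for which a relevant quantity fails to be a unit in one of the $m$ Galois ring factors.

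First I would set up the ring: by \cref{fact-GR}(1), each $\GR(p_i^{s_i},n_i)$ has additive group $\Z_{p_i^{s_i}}^{n_i}$, so $R := \prod_i \GR(p_i^{s_i},n_i)$ satisfies $(R,+)\cong G$; crucially $2$ is a unit in $R$ (since each $p_i$ with $p_i=2$ forces $s_i$... — actually $2$ need not be a unit, so here I would instead work slope-by-slope and solve the linear system directly rather than dividing by $2$, or restrict attention to the representation $(x_1,\cdot)*(x_2,\cdot)$-style bookkeeping as in \cref{lem-Z8n2n}; the cleaner route is to use the shifted parabolas $\{(x,x^2+c_jx)\}$ and, for a target difference $(a,b)$ with $a\in U(R)$, solve $(x_1^2+c_jx_1)-(x_2^2+c_jx_2)=b$ with $x_1-x_2=a$, which reduces to $a(x_1+x_2) + c_j a = b$, i.e. $x_1+x_2 = a^{-1}b - c_j$, a genuinely linear condition requiring no division by $2$). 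Then $x_1,x_2$ are determined by the pair of linear equations $x_1-x_2=a$, $x_1+x_2 = a^{-1}b-c_j$; over a ring where $2$ is not invertible this pair has a solution iff $a^{-1}b - c_j + a \in 2R$... so I would instead avoid this and use the $*$-operation from \cref{lem-starop} uniformly, or — simplest — choose the $c_j$ inside the Teichmüller system and exploit \cref{fact-GR}(3) that differences of distinct Teichmüller elements are units. Concretely: take $\{c_1,\dots,c_k\}$ to be $k$ distinct Teichmüller elements (possible since $k \le \min_i(p_i^{n_i}-1)$ guarantees enough Teichmüller representatives in each factor), form $U_0 = \bigcup_{j=1}^k \{(x,\, x\cdot x + x\cdot c_j) : x \in R\}$ via the $*$-product so that no division is needed, and count representations of each $(a,b)\in R\times R$.

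The representation count is the heart of the argument. For $(a,b)$ with $a$ invertible, each slope $c_j$ contributes exactly one ordered pair $(x_1,x_2)$ of points, one from the $c_j$-parabola and — after reindexing — this gives at least $k$ representations, but we must ensure the two points are \emph{distinct} and lie in the pruned set; the subtraction of $m$ comes from the fact that for each of the $m$ prime components, at most one value of $j$ can cause the relevant quantity (the analogue of $a^{-1}b-c_j$ or $a$ itself) to be a non-unit modulo $p_i$, hence at most $m$ slopes are lost, leaving $\ge k-m$ good slopes, and pairing good slopes in ordered pairs gives $(k-m)(k-m-1)$ representations as both sums and differences. For $(a,b)$ with $a$ \emph{not} a unit — i.e. $a \in pR$ for some prime component — I would handle these by the pruning/adjustment: the set $I \subsetneq [m]$ in the size bound indexes which components $a$ is ``singular'' in, and on each such stratum one recursively uses a lower-dimensional instance (a product of smaller Galois rings, additive group $\prod_{i\in I}\Z_{p_i^{s_i}}^{n_i} \times (\text{rest})$) whose contribution is bounded by $k\prod_{i\in[m]\setminus I}(p_i^{s_in_i}-1)$ — this is exactly the summand in $k\sum_{I\subsetneq[m]}\prod_{i\in[m]\setminus I}(p_i^{s_in_i}-1)$, obtained by multiplying $k$ (slopes/parabolas restricted to each stratum) against the number of unit-valued coordinates $\prod(p_i^{s_in_i}-1)$ in the complementary block. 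Summing over all proper subsets $I$ and adding the unit stratum gives the stated cardinality.

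The main obstacle I anticipate is twofold: (i) making the ``lose at most $m$ slopes'' count rigorous and uniform across \emph{all} target elements simultaneously — one must show that the set of bad slopes depends only on the residues of $a$ and $a^{-1}b$ modulo each $p_i$ and never exceeds size $m$, which requires a careful component-by-component (CRT) analysis of when $x_1 \ne x_2$ fails and when the points leave $R$; and (ii) the bookkeeping for the non-unit strata, where one has to verify that the recursive/auxiliary bases glue correctly under the $*$-structure and that their sizes multiply out to exactly $\prod_{i\in[m]\setminus I}(p_i^{s_in_i}-1)$ rather than something larger. Once the stratification is set up correctly, the representation-counting on each stratum is a finite linear-algebra computation over the relevant Galois ring, analogous to the explicit solution displayed in the proof of \cref{lem-podd}, and the size bound follows by summing the strata.
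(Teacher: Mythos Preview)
Your proposal has a genuine gap in the core counting step, and the paper's actual approach is different in a way that fixes it.

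The issue is your jump from ``$k-m$ good slopes'' to ``$(k-m)(k-m-1)$ representations.'' With a parabola $P_{c_j}=\{(x,x^2+c_jx)\}$, a target $(a,b)$ with $a\in U(R)$ has at most \emph{one} representation as a difference of two points on $P_{c_j}$ (this is exactly the \cref{lem-podd} computation). So $k$ parabolas yield at most $k$ representations, not $\binom{k}{2}$-many. Pairing slopes does not help: if you try cross-parabola differences $(x_1,x_1^2+c_jx_1)-(x_2,x_2^2+c_\ell x_2)=(a,b)$, the resulting equation has $2a+c_j-c_\ell$ in the denominator, so the $p=2$ obstruction you already flagged reappears, and moreover you now need $2a+c_j-c_\ell$ to be a unit for each pair $(j,\ell)$, which you cannot control by a Teichm\"uller choice of the $c_j$ alone. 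The $*$-operation you suggest as a rescue is specific to $\GR(4,n)$ and does not extend to $\GR(2^s,n)$ for $s\ne 2$, let alone to a product of Galois rings of mixed characteristics.

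The paper sidesteps all of this by using \emph{linear} maps instead of parabolas. For each $j\in[k]$ it picks a slope vector $(\alpha_{1j},\dots,\alpha_{mj})$ with $\alpha_{ij}\in\mathcal T_i\setminus\{0\}$ and sets $S_j=\{((x_1,\dots,x_m),(\alpha_{1j}x_1,\dots,\alpha_{mj}x_m)):x_i\in R_i\setminus\{0\}\}$. A representation of $(a,b)$ as $u_j-u_\ell$ with $u_j\in S_j$, $u_\ell\in S_\ell$, $j\ne\ell$, reduces componentwise to the $2\times 2$ linear system $x_i-y_i=a_i$, $\alpha_{ij}x_i-\alpha_{i\ell}y_i=b_i$, whose determinant $\alpha_{i\ell}-\alpha_{ij}$ is a unit by \cref{fact-GR}(3). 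Thus every ordered pair $(j,\ell)$ of good slopes contributes one representation, giving $(k-m)(k-m-1)$; no division by $2$ and no inversion of $a$ is ever needed. The stratification is by $I=\{i:a_i=b_i=0\}$ (not by where $a$ is a non-unit), and on each stratum one restricts the $x_i$ to $R_i\setminus\{0\}$, which is what produces the factor $\prod_{i\in[m]\setminus I}(p_i^{s_in_i}-1)$ in the size bound. Finally $S_j=-S_j$, so $U=-U$ and the difference-basis property transfers to an additive-basis property for free.
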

\begin{proof}
For each $1 \leq i \leq m$, let $R_{i}=\GR(p_{i}^{s_i}, n_{i})$ and $\cT_i$ be the Teichmuller system of $R_i$. Then $G=(\prod_{i=1}^{m} R_{i},+)$. For each element $(a,b) \in G \times G$, we can express it as $(a,b)=((a_{1}, a_{2}, \ldots, a_{m}),(b_{1}, b_{2}, \ldots, b_{m}))$, where $a_{i}, b_{i} \in R_{i}$ for $1\leq i \leq m$.

For each $1 \leq i \leq m$, since $m+2 \leq k \leq p_{i}^{n_i}-1$, by \cref{fact-GR}(3), there exists a subset $\{\alpha_{ij} \mid 1 \leq j \leq k\} \subseteq \cT_i \setminus \{0\} \subseteq U(R_i)$ such that $\alpha_{ij}-\alpha_{i\ell} \in U(R_{i})$ for all $1 \leq j, \ell \leq k$ with $j \neq \ell$. The following claim is the key to our construction.

\begin{claim}\label{claim:U_I}
For each proper subset $I$ of $[m]$, there exists $U_I \subseteq G \times G$ such that
\begin{enumerate}
    \item $U_I=-U_I$ and $|U_I| \le k\prod_{i\in [m]\setminus I} (p_{i}^{s_in_{i}}-1)$.
    \item For each $(a,b)=((a_{1}, a_{2}, \ldots, a_{m}),(b_{1}, b_{2}, \ldots, b_{m})) \in G \times G$ with $a_i,b_i \in R_i$ for $1\leq i \leq m$ and $a_i=b_i=0$ precisely when $i\in I$, we have
    $$
r_{U_I-U_I}((a,b)) \geq (k-m)(k-m-1).
$$
\end{enumerate}
\end{claim}
\begin{poc}
Let $|[m] \setminus I]=v$; then $v\geq 1$. Without loss of generality, by relabeling, we may assume that $[m] \setminus I=[v]$. For $1 \leq j \leq k$, define $S_j$ to be a subset of $G \times G$ such that
$$
S_{j}=\{((x_{1}, x_{2}, \ldots, x_{v}, 0, \ldots, 0),(\alpha_{1 j} x_{1}, \alpha_{2 j} x_{2}, \ldots, \alpha_{v j} x_{v},0, \ldots, 0)) \mid x_{i} \in R_{i}^{*}, 1 \leq i \leq v\}.
$$ 
Let $U_I=\bigcup_{j=1}^k S_j$; note that this is a disjoint union. Then 
$|U_I|=\sum_{j=1}^k|S_j| \le k\prod_{i\in [m]\setminus I} (p_{i}^{s_in_{i}}-1)$ and $U_I=-U_I$.

It remains to show that $U_I$ also satisfies (2). Let $(a,b)=((a_{1}, a_{2}, \ldots, a_{m}),(b_{1}, b_{2}, \ldots, b_{m})) \in G \times G$ with $a_i,b_i \in R_i$ for $1\leq i \leq m$ and $a_i=b_i=0$ precisely when $i>v$. Let $W:=W(a,b)=\{1\leq j \leq k \mid b_i = \alpha_{ij} a_i \text{ for some } 1\leq i \leq v\}$. Observe that $|W|\le v \le m$; indeed, for each $1\leq i \leq v$, these is at most one $j$ such that $b_i=\alpha_{ij} a_i$ if $a_i\neq 0$, and there is no $j$ with $b_i=\alpha_{ij} a_i$ if $a_i=0$ (since $b_i\neq 0$).

To show $r_{U_I-U_I}((a,b)) \geq (k-m)(k-m-1)$, it suffices to show that for each $j,\ell \in [k] \setminus W$ with $j\neq \ell$, we can find $u_j \in S_j$ and $u_{\ell} \in S_{\ell}$ such that $(a,b)=u_j-u_{\ell}$, equivalently, it suffices to show that for each $1\leq i \leq v$, there exist $x_i, y_i \in R_i^*$ such that 
\begin{equation}\label{eqn-SjSell}
a_i=x_i-y_i, \quad b_i=\alpha_{ij} x_i-\alpha_{\ell j} y_i    
\end{equation}
Indeed, since $\alpha_{ij}-\alpha_{i\ell}\in U(R_i)$, solving \cref{eqn-SjSell}, we obtain 
$$
x_{i}=\frac{b_i-\alpha_{i\ell}a_i}{\alpha_{ij}-\alpha_{i\ell}}, \quad 
y_{i}=\frac{b_i-\alpha_{ij} a_i}{\alpha_{ij}-\alpha_{i\ell}}
$$
with $x_i, y_i \in R_i^*$ since $j,\ell \notin W$, as required. 
\end{poc}    

For each proper subset $I$ of $[m]$, choose $U_I \subseteq G \times G$ as in Claim~\ref{claim:U_I}. Let $U=\bigcup_{I \subsetneq [m]} U_I$. Then $|U| \le k\sum_{I \subsetneq [m]} \prod_{i \in [m] \sm I} (p_i^{s_in_i}-1)$. For each $(a,b)=((a_{1}, a_{2}, \ldots, a_{m}),(b_{1}, b_{2}, \ldots, b_{m}))\in G \times G$ with $(a,b)\neq (\textbf{0},\textbf{0})$, we have
$$
r_{U-U}((a,b)) \geq r_{U_J-U_J}((a,b))\geq (k-m)(k-m-1),
$$
where $J=\{1\leq i \leq m \mid a_i=b_i=0\}$. On the other hand, $r_{U-U}((\textbf{0},\textbf{0}))=|U|\geq |U_{\emptyset}| = k \prod_{i \in [m] \sm I} (p_i^{s_in_i}-1) \geq k^2$. Thus, $U$ is a $(k-m)(k-m-1)$-difference basis of $G \times G$ with size at most $k\sum_{I \subsetneq [m]} \prod_{i \in [m] \sm I} p_i^{s_in_i}$, as required. Since $U_I=-U_I$ for each proper subset $I$ of $[m]$, it follows that $U=-U$. Thus, $U$ is a $(k-m)(k-m-1)$-additive basis of $G \times G$ as well.
\end{proof}

\begin{cor}
\label{cor-PCPextension}
Let $G$ be a finite abelian group with $G=\prod_{i=1}^{m} \Z_{p_{i}^{s_i}}^{n_{i}}$, where $n_{i} \geqslant 1$ and $p_{i}^{s_{i}}$ being distinct prime powers. Let $k\geq m+2$. Then as $n\to \infty$, 
\begin{align*}
\eta_{(k-m)(k-m-1)}(G^{2n}) \le k|G|^n(1+o_n(1)), \quad
\nu_{(k-m)(k-m-1)}(G^{2n}) \le k|G|^n(1+o_n(1)).
\end{align*}    
\end{cor}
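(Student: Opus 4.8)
The plan is to apply \cref{lem-PCPextension} directly with the group $G^n$ in the role of $G$, and then perform a routine asymptotic estimate on the resulting size bound. First I would rewrite $G^{2n}$ in the shape required by \cref{lem-PCPextension}: since $G=\prod_{i=1}^m \Z_{p_i^{s_i}}^{n_i}$, we have $G^n=\prod_{i=1}^m \Z_{p_i^{s_i}}^{n n_i}$, which is still a product of $m$ homogeneous components with the same distinct prime-power moduli $p_i^{s_i}$ and with each exponent $n n_i\geq 1$. Hence $G^{2n}=G^n\times G^n$ is of exactly the form handled by \cref{lem-PCPextension}, with $m$ and $k$ unchanged and the exponents $n_i$ replaced by $n n_i$. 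Because $k\geq m+2$ is fixed while $\min\{p_i^{n n_i}-1 : 1\leq i\leq m\}\geq 2^n-1\to\infty$, the hypothesis $m+2\leq k\leq \min\{p_i^{n n_i}-1\}$ holds for all sufficiently large $n$.

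For such $n$, \cref{lem-PCPextension} yields a set $U\subseteq G^{2n}$ that is simultaneously a $(k-m)(k-m-1)$-additive basis and a $(k-m)(k-m-1)$-difference basis of $G^{2n}$, with
$$
|U|\leq k\sum_{I\subsetneq [m]}\ \prod_{i\in[m]\sm I}\big(p_i^{s_i n n_i}-1\big).
$$
The next step is to show this sum equals $|G|^n(1+o_n(1))$. Using $|G|=\prod_{i=1}^m p_i^{s_i n_i}$, the term $I=\es$ contributes at most $\prod_{i=1}^m p_i^{s_i n n_i}=|G|^n$; for any nonempty proper subset $I$, choosing $i_0\in I$ gives $\prod_{i\in[m]\sm I}p_i^{s_i n n_i}\leq |G|^n/p_{i_0}^{s_{i_0} n n_{i_0}}\leq 2^{-n}|G|^n$, and there are at most $2^m-2$ such subsets, a fixed quantity. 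Therefore $|U|\leq k\big(|G|^n+(2^m-2)2^{-n}|G|^n\big)=k|G|^n(1+o_n(1))$, and since the existence of a $g$-additive (resp.\ $g$-difference) basis of $G^{2n}$ of size $|U|$ gives $\nu_g(G^{2n})\leq |U|$ (resp.\ $\eta_g(G^{2n})\leq|U|$) for $g=(k-m)(k-m-1)$, both displayed inequalities follow.

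I do not expect a genuine obstacle here; \cref{lem-PCPextension} does all the real work. The only points needing a little care are verifying that passing from $G$ to $G^n$ preserves the structural hypotheses of \cref{lem-PCPextension} (same $m$, same distinct prime-power moduli, so that the constraint on $k$ becomes merely $k\leq\min\{p_i^{n n_i}-1\}$, which is eventually automatic), and checking in the size estimate that the $I=\es$ term dominates while the remaining boundedly many terms are smaller by the exponential factor $2^{-n}$.
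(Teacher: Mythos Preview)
Your proposal is correct and follows essentially the same route as the paper: apply \cref{lem-PCPextension} to $G^n=\prod_{i=1}^m \Z_{p_i^{s_i}}^{nn_i}$ (same $m$, so the constraint on $k$ is eventually satisfied), then observe that in the resulting size bound the $I=\es$ term equals $|G|^n$ while the remaining boundedly many terms are exponentially smaller. The paper writes the asymptotic step as $k|G|^n\sum_{I\subsetneq[m]}\big(\prod_{i\in I}p_i^{-s_in_i}\big)^n=k|G|^n(1+o_n(1))$, which is exactly your estimate phrased slightly differently.
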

\begin{proof}
We only prove the estimate for $\eta$; the proof for the estimate for $\nu$ is identical. Let $n$ be sufficiently large; then $m+2 \le k \leq n\min \{ p_{i}^{n_i}-1 \mid 1 \leq i \leq m\}$.
Applying \Cref{lem-PCPextension} to the group $G^{n}=\prod_{i=1}^m {p_i^{s_i}}^{n_in}$, there exists a $(k-m)(k-m-1)$-difference basis of $G^{2n}$ of size less than $k \sum_{I \subsetneq [m]} \prod_{i \in [m] \sm I} p_{i}^{s_in_in}$. Thus, as $n\to \infty$, 
\begin{align*}
\eta_{(k-m)(k-m-1)}(G^{2 n}) \leq &k \sum_{I \subsetneq [m]} \prod_{i \in [m] \sm I} p_{i}^{s_in_in}=k|G|^{n} \sum_{I \subsetneq [m]} \Big(\prod_{i \in I} \frac{1}{p_{i}^{s_in_{i}}} \Big)^{n}\\
=&k|G|^{n}\bigg(1+\sum_{\es \subsetneq I \subsetneq [m]} \Big(\prod_{i \in I} \frac{1}{p_{i}^{s_in_i}}\Big)^n\bigg)=k|G|^{n}(1+o_{n}(1)).\qedhere
\end{align*}
\end{proof}

The constructions in Section~\ref{sec:prelim} and this section can be refined and combined to give some improved and explicit upper bounds on $\nu_g(\Z_{p^s}^{2n})$ and $\eta_g(\Z_{p^s}^{2n})$. We refer to Appendix~\ref{appendix} for improved upper bounds and some specific examples.

\section{Proof of main results}\label{sec:proofmain}
In this section, we combine all the ingredients we have so far to prove our main results.

\subsection{Proof of \cref{thm-limitg}}
We only prove the limit of $\eta_g$; the proof for the limit of $\nu_g$ is identical. Below, we focus on $\limsup_{n \to \infty} \frac{\eta_g(G^n)}{\sqrt{g}|G|^{n/2}}$. By the fundamental theorem of finite abelian groups, each finite abelian group $G$ with $|G|>1$ can be written as $G=\prod_{i=1}^{m} \Z_{p_{i}^{s_i}}^{n_i}$, where $p_{i}^{s_i}$ are distinct prime powers and $n_i \ge 1$ for each $1 \leq i \leq m$. 

Given an integer $g \ge 1$. We first consider $\eta_g(G^{2n})$. Let $k$ be the smallest integer such that $(k-m)(k-m-1) \geq g$; then we have $k=\sqrt{g}+O(1)$. By \Cref{cor-PCPextension}, as $n \to \infty$,
\begin{equation}
\label{eqn-thm-limitg-4}
\eta_{g}(G^{2n}) \le \eta_{(k-m)(k-m-1)}(G^{2n}) \le k|G|^{n}(1+o_{n}(1))= (\sqrt{g}+O(1))(1+o_{n}(1))|G|^n.
\end{equation}

Next we consider $\eta_{g}(G^{2 n+1})$. By \cref{cor-directproduct}(2),
\begin{equation}
\label{eqn-thm-limitg-5}
\eta_{g}(G^{2n+1}) \leq \eta_{|G|}(G) \eta_{\lc \frac{g}{|G|} \rc}(G^{2n})=|G|\eta_{\lc \frac{g}{|G|} \rc}(G^{2n}).
\end{equation}
By \cref{eqn-thm-limitg-4}, for sufficiently large $n$,
\begin{equation}
\label{eqn-thm-limitg-6}
\eta_{\lc \frac{g}{|G|} \rc}(G^{2n}) \le \bigg(\sqrt{\frac{g}{|G|}}+O(1)\bigg)(1+o_n(1))|G|^n.
\end{equation}
Combining \cref{eqn-thm-limitg-5,eqn-thm-limitg-6}, for sufficiently large $n$,
$$
\eta_{g}(G^{2n+1}) \le \bigg(\sqrt{\frac{g}{|G|}}+O(1)\bigg)(1+o_n(1))|G|^{n+1}=(\sqrt{g}+O(1))(1+o_n(1))|G|^{n+1/2}.
$$
Therefore, we have shown
\begin{equation}
\label{eqn-thm-limitg-7}
\limsup_{n \to \infty} \frac{\eta_g(G^{n})}{\sqrt{g}|G|^{n/2}} \le 1+O\bigg(\frac{1}{\sqrt{g}}\bigg).
\end{equation}
Consequently, combining \cref{lem-lb} and \cref{eqn-thm-limitg-7}, we have
$$
\lim_{g \rightarrow \infty} \liminf_{n \rightarrow \infty} \frac{\eta_g(G^{n})}{\sqrt{g}|G|^{n/2}}=\lim_{g \rightarrow \infty} \limsup_{n \rightarrow \infty}  \frac{\eta_g(G^{n})}{\sqrt{g}|G|^{n/2}}=1.
$$

\subsection{Proof of \cref{thm-weaklyadmissible}}
Since $G_2$ is a weakly admissible $2$-group, by definition, we can write
$$
G_2=\Z_{2^2}^{v_2} \times \Z_{2^4}^{v_4} \times \Z_{2^5}^{v_5} \times \prod_{i=1}^{t} \Z_{2^{s_{i}}}^{u_i} \times \Z_{2^3}^{v_3} \times \Z_{2}^{v_1}, 
$$
where $v_1, v_2, v_3, v_4, v_5 \ge 0$, $t, u_i \ge 1$, $6 \leq s_{1} < s_{2} < \cdots < s_{t}$, and $v_3+v_5+M \geq v_1$ with $M=\sum_{i=1}^t u_i$.

Note that $$G_2^{2n}/\bigg(\Z_{2^2}^{2v_2n} \times \Z_{2^4}^{2v_4n} \times \Z_{2^2}^{2v_5n} \times \prod_{i=1}^{t} \Z_{2^{s_{i}-3}}^{2u_in}\bigg) \cong \Z_{2^3}^{2(v_3+v_5+M)n} \times \Z_2^{2v_1n}. $$ It follows from \cref{lem-quotient}(2) that
\begin{equation}
\label{eqn-weaklyadmissible-1}
\eta_1(G_2^{2n})\leq \eta_1\bigg(\Z_{2^2}^{2v_2n} \times \Z_{2^4}^{2v_4n} \times \Z_{2^2}^{2v_5n} \times \prod_{i=1}^{t} \Z_{2^{s_{i}-3}}^{2u_in}\bigg)\eta_1\bigg(\Z_{2^3}^{2(v_3+v_5+M)n} \times \Z_2^{2v_1n}\bigg).
\end{equation}
Note that $s_i-3 \ge 3$ for each $1 \le i \le t$. Combining \cref{cor-directproduct}(2) and \cref{cor-peven}, for sufficiently large $n$,
\begin{equation}
\label{eqn-weaklyadmissible-2}
\eta_1(\Z_{2^2}^{2v_2n} \times \Z_{2^4}^{2v_4n} \times \Z_{2^2}^{2v_5n} \times \prod_{i=1}^{t} \Z_{2^{s_{i}-3}}^{2u_in})=2^{2v_2n+4v_4n+2v_5n+\sum_{i=1}^t(s_i-3)u_in}(1+o_n(1)).
\end{equation}

Since $v_3+v_5+\sum_{i=1}^tu_i \ge v_1$, we have $$(\Z_{2^3}^{2(v_3+v_5+M)n} \times \Z_2^{2v_1n})/(\Z_{2^3}^{2v_1n} \times \Z_2^{2v_1n}) \cong \Z_{2^3}^{2(v_3+v_5+\sum_{i=1}^tu_i-v_1)n}.$$ It follows from \cref{lem-quotient}(2) that
\begin{equation}
\label{eqn-weaklyadmissible-3}
\eta_1(\Z_{2^3}^{2(v_3+v_5+M)n} \times \Z_2^{2v_1n})\leq \eta_1(\Z_{2^3}^{2v_1n} \times \Z_2^{2v_1n})\eta_1(\Z_{2^3}^{2(v_3+v_5+\sum_{i=1}^tu_i-v_1)n}).
\end{equation}
According to \cref{lem-Z8n2n,cor-peven}, for sufficiently large $n$,
\begin{align}
\label{eqn-weaklyadmissible-4}
\begin{split}
\eta_1(\Z_{2^3}^{2v_1n} \times \Z_2^{2v_1n})&=2^{4v_1n}(1+o_n(1)), \\
\eta_1(\Z_{2^3}^{2(v_3+v_5+\sum_{i=1}^tu_i-v_1)n})&=2^{3(v_3+v_5+\sum_{i=1}^tu_i-v_1)n}(1+o_n(1)).
\end{split}
\end{align}
Combining \cref{eqn-weaklyadmissible-1,eqn-weaklyadmissible-2,eqn-weaklyadmissible-3,eqn-weaklyadmissible-4}, for sufficiently large $n$, $\eta_{1}(G_2^{2n})=(1+o_{n}(1))|G_2|^{n}.$

Since $G=G_2 \times H$, where $H$ has odd order, by \cref{lem-podd} and \cref{cor-directproduct}(2), we have
\[
\eta_{1}(G^{2 n}) \le \eta_{1}(G_{2}^{2 n}) \eta_{1}(H^{2 n}) \leq(1+o_{n}(1))|G_{2}|^{n}(1+o_{n}(1))|H|^{n}=(1+o_{n}(1))|G|^{n}.
\]
Since $\eta_1(G^{2n}) \ge |G|^n$, then $\eta_{1}(G^{2 n})=(1+o_{n}(1))|G|^{n}$ as required. Finally, the existence of the limits $\lim_{n \rightarrow \infty} \frac{\eta_{g}(G^{2 n})}{\sqrt{g}|G|^{n}}$ and $\lim _{n \rightarrow \infty} \frac{\eta_{g}(G^{2n+1})}{\sqrt{g}|G|^{n+\frac{1}{2}}}$ then follows from \cref{prop-limits}.

\subsection{Proof of \cref{thm-admissible}}
Since $G_2$ is an admissible $2$-group, by definition, we can write
$$
G_2=\prod_{i=1}^t \Z_{2^{2s_i}}^{u_i} \times \Z_{2^4}^{\ell_2} \times \Z_{2^2}^{\ell_1} \times \prod_{j=1}^w \Z_{2^{2r_j+1}}^{v_j} \times \Z_{2^3}^{h_2} \times\Z_2^{h_1},
$$
where $t, w, \ell_1, \ell_2, h_1, h_2 \ge 0$, $u_i, v_j \ge 1$, $3 \le s_1 < s_2 < \cdots < s_t$, $2 \le r_1 < r_2 < \cdots < r_w$, and $2 \lfloor \frac{\sum_{i=1}^t u_i}{2} \rfloor +\sum_{j=1}^w v_j +h_2 \ge h_1$. Let $M=\lfloor \frac{\sum_{i=1}^t u_i}{2} \rfloor$.

Define 
$$
x=\sum_{i=1}^t u_i-2 M=\begin{cases}
    0 & \mbox{if $\sum_{i=1}^t u_i$ even,} \\
    1 & \mbox{if $\sum_{i=1}^t u_i$ odd.}
\end{cases}
$$
Note that $$G_2^{n}/(\Z_{2^{2(s_1-3)}}^{(u_1-x)n} \times \prod_{i=2}^t \Z_{2^{2(s_i-3)}}^{u_in} \times \prod_{j=1}^w \Z_{2^{2(r_j-1)}}^{v_jn}) \cong \Z_{2^{2s_1}}^{xn} \times \Z_{2^6}^{2 M n} \times \Z_{2^4}^{\ell_2n} \times \Z_{2^2}^{\ell_1n} \times \Z_{2^{3}}^{(\sum_{j=1}^w v_j+h_2)n} \times\Z_2^{h_1n}.$$ By \cref{lem-quotient}(2),
\begin{align}
\begin{split}
\label{eqn-admissible-1}
\eta_1(G_2^{n}) \le &\eta_1(\Z_{2^{2(s_1-3)}}^{(u_1-x)n} \times \prod_{i=2}^t \Z_{2^{2(s_i-3)}}^{u_in} \times \prod_{j=1}^w \Z_{2^{2(r_j-1)}}^{v_jn}) \\
& \cdot \eta_1(\Z_{2^{2s_1}}^{xn} \times \Z_{2^6}^{2 M n} \times \Z_{2^4}^{\ell_2n} \times \Z_{2^2}^{\ell_1n} \times \Z_{2^{3}}^{(\sum_{j=1}^w v_j+h_2)n} \times\Z_2^{h_1n}).
\end{split}
\end{align}
Note that $s_i-3 \ge 0$ for each $1 \le i \le t$ and $r_j-1 \ge 1$ for each $1 \le j \le w$. Combining \cref{cor-directproduct}(2) and \cref{lem-Z22s}, for sufficiently large $n$,
\begin{equation}
\label{eqn-admissible-2}
\eta_1(\Z_{2^{2(s_1-3)}}^{(u_1-x)n} \times \prod_{i=2}^t \Z_{2^{2(s_i-3)}}^{u_in} \times \prod_{j=1}^w \Z_{2^{2(r_j-1)}}^{v_jn})=2^{((s_1-3)(u_1-x)+\sum_{i=2}^t (s_i-3)u_i+\sum_{j=1}^w (r_j-1) v_j)n}(1+o_n(1)).
\end{equation}
Note that 
\begin{align*}
&(\Z_{2^{2s_1}}^{xn} \times \Z_{2^6}^{2 M n} \times \Z_{2^4}^{\ell_2n} \times \Z_{2^2}^{\ell_1n} \times \Z_{2^{3}}^{(\sum_{j=1}^w v_j+h_2)n} \times\Z_2^{h_1n}) / (\Z_{2^{2s_1}}^{xn} \times \Z_{2^3}^{2 M n} \times \Z_{2^4}^{\ell_2n} \times \Z_{2^2}^{\ell_1n}) \\
\cong & \Z_{2^{3}}^{(2 M+\sum_{j=1}^w v_j+h_2)n} \times\Z_2^{h_1n}.
\end{align*}
By \cref{lem-quotient}(2),
\begin{align}
\begin{split}
\label{eqn-admissible-3}
&\eta_1(\Z_{2^{2s_1}}^{xn} \times \Z_{2^6}^{2 M n} \times \Z_{2^4}^{\ell_2n} \times \Z_{2^2}^{\ell_1n} \times \Z_{2^{3}}^{(\sum_{j=1}^w v_j+h_2)n} \times\Z_2^{h_1n})\\
\le &\eta_1(\Z_{2^{2s_1}}^{xn} \times \Z_{2^3}^{2 M n} \times \Z_{2^4}^{\ell_2n} \times \Z_{2^2}^{\ell_1n}) \eta_1(\Z_{2^{3}}^{(2 M+\sum_{j=1}^w v_j+h_2)n} \times\Z_2^{h_1n}).
\end{split}
\end{align}
Combining \cref{cor-directproduct}(2), \cref{lem-Z22s}, and \cref{cor-peven}, for sufficiently large $n$,
\begin{equation}
\label{eqn-admissible-4}
\eta_1(\Z_{2^{2s_1}}^{xn} \times \Z_{2^3}^{2 M n} \times \Z_{2^4}^{\ell_2n} \times \Z_{2^2}^{\ell_1n})=2^{(s_1x+3 M+2\ell_2+\ell_1)n}(1+o_n(1)).
\end{equation}

Since $G_2$ has square order, $\sum_{j=1}^w v_j +h_2+h_1 \equiv 0 \pmod2$. Together with $2M+\sum_{j=1}^w v_j +h_2 \ge h_1$, we can write $2M+\sum_{j=1}^w v_j +h_2-h_1=2q$ for some $q \ge 0$. Note that $$(\Z_{2^{3}}^{(2M+\sum_{j=1}^w v_j+h_2)n} \times\Z_2^{h_1n})/(\Z_{2^3}^{h_1n} \times \Z_2^{h_1n}) \cong \Z_{2^3}^{2qn}.$$ By \cref{lem-quotient}(2),
\begin{equation}
\label{eqn-admissible-5}
\eta_1(\Z_{2^{3}}^{(2M+\sum_{j=1}^w v_j+h_2)n} \times\Z_2^{h_1n})=\eta_1(\Z_{2^3}^{h_1n} \times \Z_2^{h_1n})\eta_1(\Z_{2^3}^{2qn}).
\end{equation}
By \cref{lem-Z8n2n,cor-peven}, for sufficiently large $n$,
\begin{align}
\label{eqn-admissible-6}
\eta_1(\Z_{2^3}^{h_1n} \times \Z_2^{h_1n})=2^{2h_1n}(1+o_n(1)), \quad \eta_1(\Z_{2^3}^{2qn})=2^{3qn}(1+o_n(1)).
\end{align}
Combining \cref{eqn-admissible-1,eqn-admissible-2,eqn-admissible-3,eqn-admissible-4,eqn-admissible-5,eqn-admissible-6}, for sufficiently large $n$,
$
\eta_{1}(G_2^{n}) \le (1+o_{n}(1))|G_2|^{\frac{n}{2}}.
$

Since $G=G_2 \times H \times H$, where $H$ has odd order, by \cref{lem-podd} and \cref{cor-directproduct}(2), we have
\[
\eta_{1}(G^{n})\leq \eta_{1}(G_{2}^{n}) \eta_{1}(H^{2 n}) \leq(1+o_{n}(1))|G_{2}|^{n/2}(1+o_{n}(1))|H|^{n}=(1+o_{n}(1))|G|^{n/2}.
\]
Since $\eta_1(G^{n}) \ge |G|^{\frac{n}{2}}$, then $\eta_{1}(G^{n})=(1+o_{n}(1))|G|^{\frac{n}{2}}$ as required. Finally, the existence of the limit $\lim_{n \to \infty} \frac{\eta_g(G^n)}{\sqrt{g}|G|^{n/2}}$ follows from \cref{prop-limits}.

\section{Conclusion}\label{sec:conclusion}

Motivated by recent works on $g$-Sidon sets and $g$-difference bases, we study $g$-additive bases and $g$-difference bases in groups of the form $G^n$, where $G$ is a finite abelian group and $n \ge 1$. Our central question is to estimate or determine $\nu_g(G^n)$ and $\eta_g(G^n)$, the minimal sizes of a $g$-additive basis and a $g$-difference basis in $G^n$, respectively. Using a constructive approach, we build $g$-additive and $g$-difference bases in $G^n$; these constructions, in turn, yield asymptotic information about $\nu_g(G^n)$ and $\eta_g(G^n)$ in suitable limiting regimes.

We conclude by posing three open questions that may be of interest:

\begin{itemize}
\item[(1)] In \cref{thm-admissible}, we have shown the existence of the limit $\lim_{n\rightarrow \infty} \frac{\eta_g(G^n)}{\sqrt{g}|G|^{n/2}}$ for certain finite abelian group $G$ and all positive integers $g$. When $g$ goes to infinity, we know the asymptotic behavior of these limits from \cref{thm-limitg}. However, we are not able to determine these limits explicitly. What are the exact values of these limits?
\item[(2)] For $g$-additive bases, is it possible to establish results similar to \cref{thm-admissible}, establishing the existence of $\lim_{n\rightarrow \infty} \frac{\nu_g(G^n)}{\sqrt{g}|G|^{n/2}}$ for each fixed positive integer $g$? 
\item[(3)] Is it possible to weaken the weakly admissible and admissible conditions in \cref{thm-weaklyadmissible} and \cref{thm-admissible}? 
\end{itemize}

\section*{Acknowledgments}
Shuxing Li's research was supported by the U.S. National Science Foundation Grant DMS-2452236. C. H. Yip thanks University of Delaware for hospitality during his visit with the first author in March 2025, where this project was initiated. C. H. Yip also thanks Ernie Croot for helpful discussions. 
\bibliographystyle{abbrv}
\bibliography{main}
\appendix
\section{Improved and explicit upper bounds on $\nu_g(\Z_{p^s}^{2n})$ and $\eta_g(\Z_{p^s}^{2n})$}\label{appendix}

In this appendix, we present improved upper bounds on $\nu_g(\Z_{p^s}^{2n})$ and $\eta_g(\Z_{p^s}^{2n})$. As an illustrative example, we dive into more details concerning the upper bounds on $\nu_g(\Z_{p^s}^{2n})$ and $\eta_g(\Z_{p^s}^{2n})$ for $s \ge 1$, $n \ge 1$, and $g \in [6]$.      

The following lemma is a strengthening of \cref{cor-PCPextension}.

\begin{lem}
\label{lem-PCP}
Let $p$ be a prime and $s, n \ge 1$. Let $2 \le k \le \min\{p^n,\frac{p^{sn}}{2} \}$. Then we have
$$
\nu_{k^{2}-k}(\Z_{p^{s}}^{2 n}) \le   k (p^{sn}-1), \quad \eta_{k^{2}-k}(\Z_{p^{s}}^{2 n}) \le k (p^{sn}-1).
$$
\end{lem}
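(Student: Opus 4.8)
\textbf{Proof proposal for \cref{lem-PCP}.}

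The plan is to revisit the construction in the proof of \cref{lem-PCPextension} in the special case $m=1$, $p_1=p$, $s_1=s$, $n_1=n$, and optimize it. With $m=1$, the quantity $(k-m)(k-m-1)$ becomes $(k-1)(k-2)$, which is not quite what we want; the improvement comes from shifting the index set so that the $k$ ``slopes'' $\alpha_{1},\ldots,\alpha_{k}$ no longer need to avoid a set $W$ of size up to $m$. More precisely, let $R=\GR(p^{s},n)$, so $(R\times R,+)=\Z_{p^s}^{2n}$, and pick $k$ distinct elements $\alpha_1,\dots,\alpha_k\in\cT\setminus\{0\}$; by \cref{fact-GR}(3), $\alpha_j-\alpha_\ell\in U(R)$ for $j\neq\ell$. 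Now set $S_j=\{(x,\alpha_j x)\mid x\in R^{*}\}$ for $1\le j\le k$ and $U=\bigcup_{j=1}^{k}S_j$ (a disjoint union, since the $\alpha_j$ are distinct), so $|U|=k(p^{sn}-1)$ and $U=-U$.

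The first main step is to check the difference (equivalently sum, since $U=-U$) multiplicity. Given $(a,b)\in R\times R$ with $(a,b)\neq(0,0)$: if $a\neq 0$, then for any pair $j\neq\ell$ the system $a=x-y$, $b=\alpha_j x-\alpha_\ell y$ has the unique solution $x=(b-\alpha_\ell a)/(\alpha_j-\alpha_\ell)$, $y=(b-\alpha_j a)/(\alpha_j-\alpha_\ell)$, and these lie in $R^{*}$ unless $b=\alpha_\ell a$ or $b=\alpha_j a$; since $a\neq 0$, there is at most one index $j_0$ with $b=\alpha_{j_0}a$, so at least $k-1$ of the slopes are ``good,'' giving at least $(k-1)(k-2)$ ordered pairs $(j,\ell)$ — still not enough. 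The fix is to also use the subsets $S_j$ with a \emph{single} coordinate when $a=0$: when $a=0$ but $b\neq 0$, writing $b=x-y$ with $x,y\in R^{*}$ and then needing $0=\alpha_j x-\alpha_\ell y$ forces $\alpha_j x=\alpha_\ell y$; instead one represents $(0,b)$ using $(x,\alpha_j x)-(x',\alpha_\ell x')$ only when this is consistent, which it generally is not, so the honest route is: for $(0,b)$ with $b\neq0$, note $r_{U-U}((0,b))$ counts solutions $x-x'=0$, $\alpha_j x-\alpha_\ell x'=b$, i.e. $x=x'$, $(\alpha_j-\alpha_\ell)x=b$, which has exactly one solution $x\in R^{*}$ for each ordered pair $j\neq\ell$, hence $r_{U-U}((0,b))=k(k-1)\ge k^2-k$. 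For $a\neq 0$ the count of good ordered pairs $(j,\ell)$ with $j\neq\ell$, $\alpha_j a\neq b$, $\alpha_\ell a\neq b$ is at least $(k-1)(k-2)$; to reach $k^2-k$ one should instead only require $\alpha_\ell a\neq b$ (the constraint on $x$) and $\alpha_j a\neq b$ (the constraint on $y$), but track that the two forbidden indices can coincide, so the number of valid ordered pairs is at least $k(k-1)-2(k-1)=(k-1)(k-2)$ in the worst case — so actually the clean statement $k^2-k$ requires the refinement that for the bound we only need to exclude \emph{one} forbidden slope at a time, and careful bookkeeping gives $(k-1)^2 \ge k^2-k$ valid pairs when the two forbidden indices coincide and $k(k-1)-2(k-1)+\ldots$; I would present the count as: at least $k-1$ choices of $j$ and, for each, at least $k-1$ choices of $\ell\neq j$ that also avoid the single bad index, yielding $\ge (k-1)^2\ge k^2-k$ whenever $k\ge 1$. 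Finally $r_{U-U}((0,0))=|U|=k(p^{sn}-1)\ge k^2-k$ using $k\le p^{sn}/2$, i.e. $p^{sn}-1\ge k-1+\tfrac{p^{sn}}{2}-\tfrac12\ge k$ needs $p^{sn}\ge 2k$, which is exactly the hypothesis.

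The second step is purely bookkeeping: conclude $U$ is a $(k^2-k)$-difference basis of $\Z_{p^s}^{2n}$ of size $k(p^{sn}-1)$, and since $U=-U$ the same multiset computation shows it is a $(k^2-k)$-additive basis, giving both displayed inequalities. The hypothesis $k\le p^{n}$ is what guarantees $k$ distinct nonzero Teichmüller elements exist (there are $p^n-1$ of them), and $k\le p^{sn}/2$ is what makes the multiplicity at $(0,0)$ large enough. I expect the main obstacle to be the careful case analysis of the multiplicity at points $(a,b)$ with exactly one coordinate zero versus both nonzero, and in particular pinning down that the worst case still yields at least $k^2-k$ representations; everything else is a direct specialization of \cref{lem-PCPextension}. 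One clean way around the worst-case subtlety is to observe that the two ``forbidden'' slopes for a given $(a,b)$ with $a\neq 0$ are $b/a$ (appearing once as a constraint on $x$ via $\alpha_\ell$ and once on $y$ via $\alpha_j$), so in fact only the single value $b/a$ must be avoided by \emph{both} $\alpha_j$ and $\alpha_\ell$; hence the good ordered pairs number at least $(k-1)(k-2)$ only if $b/a\in\{\alpha_1,\dots,\alpha_k\}$ and $k(k-1)$ otherwise, and in the former subcase one recovers the bound by also allowing the degenerate representations using $x\in R$ rather than $x\in R^{*}$ together with the auxiliary $1$-dimensional pieces $\{(0,\alpha_j x)\}$ — but since adding such pieces only increases $|U|$ beyond $k(p^{sn}-1)$, the cleanest fix is simply to enlarge each $S_j$ to $\{(x,\alpha_j x)\mid x\in R\}$ of size $p^{sn}$, which makes every multiplicity count equal to exactly $k(k-1)$ on nonzero elements and $k p^{sn}$ at the origin, at the cost of replacing $k(p^{sn}-1)$ by $kp^{sn}$; to keep the sharper bound $k(p^{sn}-1)$ I would instead carry out the honest case analysis sketched above, which is where the real work lies.
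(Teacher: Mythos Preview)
Your setup matches the paper's exactly: lines $S_j=\{(x,\alpha_j x)\mid x\in R^*\}$ through $k$ Teichm\"uller slopes in $R=\GR(p^s,n)$, with $S=\bigcup_j S_j$ and $S=-S$. You also correctly isolate the problematic case: when $b=\alpha_{j_0}a$ for some (unique) index $j_0$, the cross-line representations with $j,\ell\in[k]\setminus\{j_0\}$, $j\neq\ell$, number only $(k-1)(k-2)$, which falls $2(k-1)$ short of $k(k-1)$.

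The gap is that you never locate those missing representations, and the patches you propose do not work. The inequality $(k-1)^2\ge k^2-k$ you invoke is false for every $k\ge 2$ (it rearranges to $1\ge k$), so the ``$k-1$ choices of $j$, then $k-1$ choices of $\ell$'' bookkeeping cannot succeed. The key observation you are missing, and which the paper uses, is that in the bad case the point $(a,b)=(a,\alpha_{j_0}a)$ (necessarily with $a\neq 0$) lies along the direction of $S_{j_0}$ itself, and therefore has many \emph{intra-line} representations
\[
(a,\alpha_{j_0}a)=(a+c,\alpha_{j_0}(a+c))-(c,\alpha_{j_0}c),\qquad c\in R\setminus\{0,-a\},
\]
contributing $p^{sn}-2$ more. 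These bring the total to $(k-1)(k-2)+p^{sn}-2$, and the hypothesis $k\le p^{sn}/2$ (equivalently $p^{sn}\ge 2k$) is precisely what makes this at least $k(k-1)$; this is where that hypothesis is actually used, not at the identity, where $p^{sn}\ge k$ already suffices. Two minor side issues: take the $\alpha_j$ from $\cT$ rather than $\cT\setminus\{0\}$, since you need $k$ of them and $|\cT|=p^n$ while $|\cT\setminus\{0\}|=p^n-1$; and enlarging each $S_j$ to all of $R$ gives $|S|=k(p^{sn}-1)+1$, not $kp^{sn}$, because the $k$ lines all pass through the origin.
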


\begin{proof}
Let $R=\GR(p^{s}, n)$. Note that $(R \times R,+) \cong \Z_{p^s}^{2n}$. We work on $R \times R$ in order to construct the corresponding additive basis and difference basis in $\Z_{p^s}^{2n}$. Let $\cT$ be the Teichmuller system of $R$ and $R^*=R \sm \{0\}$. 

We first construct a $(k^2-k)$-difference basis of size $k(p^{sn}-1)$ in $(R \times R,+)$, where $2 \le k \le p^{n}$. By \cref{fact-GR}(3), we can assume that $\{\alpha_{1}, \ldots, \alpha_{k}\}$ be a subset of $\cT$, such that $\al_i-\al_j \in U(R)$ for $1 \le i, j \le k$ and $i \ne j$. For each $1 \le i \le k$, let
$$
S_i=\{(x, \alpha_{i} x) \mid x \in R^{*}\}.
$$ 
and
\begin{equation}
\label{eqn-S}
S=\bigcup_{i=1}^k S_i=\bigcup_{i=1}^{k}\{(x, \alpha_{i} x) \mid x \in R^{*}\}.
\end{equation}
Then $|S| \le k(p^{sn}-1)$ and we claim that $S$ is a $k(k-1)$-difference basis in $(R \times R,+) \cong \Z_{p^s}^{2n}$. For $(a,b) \in R \times R$ and $1 \le i, j \le k$, by solving $(a, b)=(x_{1}, \alpha_{i} x_{1})-(x_{2}, \alpha_{j} x_{2})$, we have the following two cases.

Case (a): $b-\alpha_{\ell} a \neq 0$ for each $1 \leq \ell \leq k$. Then $(a,b)$ cannot be expressed as a difference of elements from $\{ (x,\al_{\ell} x) \mid x \in R^*\}$, where $1 \le \ell \le k$. On the other hand, for distinct $i$ and $j$ with $1 \le i,j \le k$, $(a, b)=(x_{1}, \alpha_{i} x_{1})-(x_{2}, \alpha_{j} x_{2})$ is equivalent to
$$
\begin{cases}
x_1=\frac{b-\al_ja}{\al_i-\al_j} \in R^*, \\
x_2=\frac{b-\al_ia}{\al_i-\al_j} \in R^*.
\end{cases}
$$ 
Therefore, for distinct $i$ and $j$, $(a,b)$ can be expressed as exactly one difference between elements in $S_i$ and elements in $S_j$. Hence, $(a,b)$ can be expressed as exactly $k(k-1)$ differences from $S$.

Case (b): $b-\alpha_{\ell} a=0$ for exactly one $\ell$ with $1 \leq \ell \leq k$. Similar to Case (a), $(a,b)$ can be expressed as exactly $(k-1)(k-2)$ differences from $\bigcup_{\substack{1 \le i \le k \\ i \ne \ell}} S_i$. Moreover, within $S_{\ell}=\{(x, \alpha_{\ell} x) \mid x \in R^{*}\}$, we have $(a, b)=(a, \alpha_{\ell} a)=(a+c, \alpha_{\ell}(a+c))-(c, \alpha_{\ell} c)$. Note that $(a+c, \alpha_{\ell}(a+c)) \in S_{\ell}$ and $(c, \alpha_{\ell} c) \in S_{\ell}$ if and only if $c \in R \backslash\{0,-a\}$. Hence, $(a, b)$ can be expressed as exactly $p^{sn}-2$ differences from $S_{\ell}$. Consequently, $(a, b)$ can be expressed as exactly $p^{sn}-2+(k-1)(k-2)$ differences from $S$.

Note that $k \le \frac{p^{sn}}{2}$ implies $p^{sn}-2+(k-1)(k-2) \ge k(k-1)$. Consequently, $S$ is a $k(k-1)$-difference basis in $(R \times R,+) \cong \Z_{p^s}^{2n}$, and therefore, $\eta_{k^{2}-k}(\Z_{p^{s}}^{2 n}) \leq k(p^{sn}-1)$. 

Next, we proceed to construct a $(k^2-k)$-additive basis of size $k(p^{sn}-1)$ in $(R \times R,+)$. Note that for each $1 \le i \le k$, we have $-S_i=\{ (-x,-\alpha_ix) \mid x \in \R^* \}=S_i$. Therefore, the set $S$ in \Cref{eqn-S} satisfies $-S=\{-x \mid x \in S\}=S$ and is also a $(k^2-k)$-additive basis of size $k(p^{sn}-1)$ in $(R \times R,+)$, as required.
\end{proof}

\begin{rem}
\label{rem-small}
The condition $2 \le \min\{p^n,\frac{p^{sn}}{2} \}$ in \Cref{lem-PCP} excludes two cases where $(p,s,n) \in \{ (2,1,1), (3,1,1) \}$. In fact, the upper bounds in \Cref{lem-PCP} do not hold in both cases. Specifically, 
\begin{align*}
(\nu_1(\Z_2^2),\nu_2(\Z_2^2),\nu_3(\Z_2^2),\nu_4(\Z_2^2))=&(3,3,4,4), \\
(\eta_1(\Z_2^2),\eta_2(\Z_2^2),\eta_3(\Z_2^2),\eta_4(\Z_2^2))=&(3,3,4,4),
\end{align*}
and
\begin{align*}
(\nu_1(\Z_3^2),\nu_2(\Z_3^2),\nu_3(\Z_3^2),\nu_4(\Z_3^2),\nu_5(\Z_3^2),\nu_6(\Z_3^2),\nu_7(\Z_3^2),\nu_8(\Z_3^2))=&(4,5,6,7,7,8,8,9), \\
(\eta_1(\Z_3^2),\eta_2(\Z_3^2),\eta_3(\Z_3^2),\eta_4(\Z_3^2),\eta_5(\Z_3^2),\eta_6(\Z_3^2),\eta_7(\Z_3^2),\eta_8(\Z_3^2))=&(4,5,6,7,7,8,8,9).
\end{align*}
\end{rem}

\medskip

Below, we explicitly compute upper bounds on $\nu_g(\Z_{p^s}^{2n})$ and $\eta_g(\Z_{p^s}^{2n})$ for $g \in [6]$, by combining \Cref{cor-directproduct}, \Cref{lem-podd}, and \Cref{lem-PCP}. We exclude the two cases $(p,s,n) \in \{ (2,1,1), (3,1,1) \}$ where the corresponding groups have very small sizes and have been handled in \Cref{rem-small}. 


We consider upper bounds on $\nu_g(\Z_{p^s}^{2n})$.
\begin{itemize}
\item For $n \ge 1$, by \Cref{lem-PCP},
\begin{align}
\begin{split}
\nu_1(\Z_{p^s}^{2n}) & \le \nu_2(\Z_{p^s}^{2n}) \le 2(p^{sn}-1) \\
\nu_3(\Z_{p^s}^{2n}) & \le \nu_4(\Z_{p^s}^{2n}) \le \nu_5(\Z_{p^s}^{2n}) \le \nu_6(\Z_{p^s}^{2n}) \le 3(p^{sn}-1)
\label{eqn-rem-1}
\end{split}
\end{align}
\item For $n \ge 2$, by \Cref{cor-directproduct} and \Cref{eqn-rem-1},
\begin{align}
\begin{split}
\nu_1(\Z_{p^s}^{2n}) & \le \nu_2(\Z_{p^s}^{2n}) \le \nu_3(\Z_{p^s}^{2n}) \le \nu_4(\Z_{p^s}^{2n}) \le \nu_2(\Z_{p^s}^{2})\nu_2(\Z_{p^s}^{2(n-1)}) \le 4(p^s-1)(p^{s(n-1)}-1) \\
\nu_5(\Z_{p^s}^{2n}) & \le \nu_6(\Z_{p^s}^{2n}) \le \nu_3(\Z_{p^s}^{2})\nu_2(\Z_{p^s}^{2(n-1)}) \le 6(p^s-1)(p^{s(n-1)}-1)
\label{eqn-rem-2}
\end{split}
\end{align}
\item For $n \ge 3$, by \Cref{cor-directproduct} and \Cref{eqn-rem-1,eqn-rem-2},
\begin{align}
\begin{split}
\nu_1(\Z_{p^s}^{2n}) \le & \nu_2(\Z_{p^s}^{2n}) \le \nu_3(\Z_{p^s}^{2n}) \le \nu_4(\Z_{p^s}^{2n}) \le \nu_5(\Z_{p^s}^{2n}) \le \nu_6(\Z_{p^s}^{2n}) \\  \le & \nu_3(\Z_{p^s}^4)\nu_2(\Z_{p^s}^{2(n-2)}) \\
    \le & \min\{ 6(p^{2s}-1)(p^{s(n-2)}-1), 8(p^s-1)^2(p^{s(n-2)}-1) \} \label{eqn-rem-3}
\end{split}
\end{align}
\end{itemize}
If needed, we can further optimize the upper bounds on $\nu_g(\Z_{p^s}^{2n})$, $g \in [6]$, by combining the upper bounds listed in \Cref{eqn-rem-1,eqn-rem-2,eqn-rem-3}. 

Next, we consider the upper bounds on $\eta_g(\Z_{p^s}^{2n})$ for  $p$ being an odd prime. 
\begin{itemize}
\item For $n \ge 1$, by \Cref{lem-podd} and \Cref{lem-PCP},
\begin{align}
\begin{split}
\eta_1(\Z_{p^s}^{2n}) & \le p^{sn}+9p^{(s-\frac{1}{2})n} \\
\eta_1(\Z_{p^s}^{2n}) & \le \eta_2(\Z_{p^s}^{2n}) \le 2(p^{sn}-1) \\
\eta_3(\Z_{p^s}^{2n}) & \le \eta_4(\Z_{p^s}^{2n}) \le \eta_5(\Z_{p^s}^{2n}) \le \eta_6(\Z_{p^s}^{2n}) \le 3(p^{sn}-1)
\label{eqn-rem-4}
\end{split}
\end{align}
\item For $n \ge 2$, by \Cref{cor-directproduct} and \Cref{eqn-rem-4},
\begin{align}
\begin{split}
\eta_1(\Z_{p^s}^{2n}) & \le \eta_2(\Z_{p^s}^{2n}) \le \eta_2(\Z_{p^s}^{2})\eta_1(\Z_{p^s}^{2(n-1)}) \le 2(p^{s}-1)(p^{s(n-1)}+9p^{(s-\frac{1}{2})(n-1)}) \\
\eta_1(\Z_{p^s}^{2n}) & \le \eta_2(\Z_{p^s}^{2n}) \le \eta_3(\Z_{p^s}^{2n}) \le \eta_4(\Z_{p^s}^{2n}) \le \eta_2(\Z_{p^s}^{2})\eta_2(\Z_{p^s}^{2(n-1)}) \le 4(p^s-1)(p^{s(n-1)}-1) \\
\eta_5(\Z_{p^s}^{2n}) & \le \eta_6(\Z_{p^s}^{2n}) \le \eta_3(\Z_{p^s}^{2})\eta_2(\Z_{p^s}^{2(n-1)}) \le 6(p^s-1)(p^{s(n-1)}-1)
\label{eqn-rem-5}
\end{split}
\end{align}
\item For $n \ge 3$, by \Cref{cor-directproduct} and \Cref{eqn-rem-4,eqn-rem-5},
\begin{align}
\begin{split}
\eta_1(\Z_{p^s}^{2n}) \le & \eta_2(\Z_{p^s}^{2n}) \le \eta_3(\Z_{p^s}^{2n}) \le \eta_4(\Z_{p^s}^{2n}) \le \eta_5(\Z_{p^s}^{2n}) \le \eta_6(\Z_{p^s}^{2n}) \\  \le & \eta_3(\Z_{p^s}^4)\eta_2(\Z_{p^s}^{2(n-2)}) \\
    \le & \min\{ 6(p^{2s}-1)(p^{s(n-2)}-1), 8(p^s-1)^2(p^{s(n-2)}-1) \}
\label{eqn-rem-6}
\end{split}
\end{align}
\end{itemize}
If needed, we can further optimize the upper bounds on $\eta_g(\Z_{p^s}^{2n})$, $p$ odd prime, $g \in [6]$, by combining the upper bounds listed in \Cref{eqn-rem-4,eqn-rem-5,eqn-rem-6}. 

\end{document}